\documentclass[reqno,12pt]{amsart}
\usepackage{amsfonts}
\usepackage{graphicx}
\usepackage{amsfonts,amsmath, amssymb}
\usepackage{marginnote}
\usepackage{color}

\usepackage[margin=1.1in]{geometry} 
\usepackage[colorlinks,linkcolor=blue,anchorcolor=red,citecolor=blue]{hyperref}
\numberwithin{equation}{section}

\allowdisplaybreaks

\newcommand{\R}{\mathbb{R}}

\newtheorem{theorem}{Theorem}[section]
\newtheorem{corollary}[theorem]{Corollary}
\newtheorem{lemma}[theorem]{Lemma}
\newtheorem{proposition}[theorem]{Proposition}
\newtheorem{remark}[theorem]{Remark}
\newtheorem{definition}[theorem]{Definition}

\def\i{\infty}
\def\f{\frac}

\def\b{\bar}

\usepackage{tikz}

\newcommand{\dd}{{\rm d}}

\newcommand{\mG}{\mathbf{\Gamma}}

\newcommand{\Fi}{\mathbf{1}}

\newcommand{\ga}{\gamma}

\newcommand{\pa}{\partial}
\newcommand{\ka}{\kappa}
\newcommand{\eps}{\epsilon}

\newcommand{\vep}{\varepsilon}

\begin{document}

\title[Shock profiles for cutoff Boltzmann of a binary gas mixture]{Shock profiles for the cutoff Boltzmann equation of a binary gas mixture}
\author[R.-J. Duan]{Renjun Duan}
\address[RJD]{Department of Mathematics, The Chinese University of Hong Kong, Shatin, Hong Kong, P.R.~China}
\email{rjduan@math.cuhk.edu.hk}

\author[Z.-G. Li]{Zongguang Li}
\address[ZGL]{Department of Applied Mathematics, The Hong Kong Polytechnic University, Hung Hom, Hong Kong, P.R.~China}
\email{zongguang.li@polyu.edu.hk}

\author[Z. Zhang]{Zhu Zhang}
\address[ZZ]{Department of Applied Mathematics, The Hong Kong Polytechnic University, Hung Hom, Hong Kong, P.R.~China}
\email{zhuama.zhang@polyu.edu.hk}

\begin{abstract}
We prove the existence of small-amplitude traveling shock profiles for the one-dimensional Boltzmann equation of a binary gas mixture with angular cutoff potentials in the full range $-3<\gamma\le 1$. The result extends the classical construction of Caflisch and Nicolaenko from hard potentials to the cutoff soft-potential regime. Indeed, the argument of proofs combines a Lyapunov--Schmidt reduction of the macroscopic component to a Burgers equation with an accelerated backward bi-characteristic method and a weighted $L^2$--$L^\infty$ iteration. Acceleration restores a uniformly positive collision frequency, compensating for the lack of a spectral gap for soft potentials, while the $L^2$--$L^\infty$ framework accommodates the absence of velocity smoothing induced by the cutoff, including a possible singularity along the grazing characteristic $v_1=s$. The shock profile tends to the Rankine--Hugoniot bi-Maxwellians at a mixed exponential rate as $|x|\to \infty$, with a sub-exponential remainder of order $|\varepsilon x|^{2/(3-\gamma)}$.
\end{abstract}

\date{\today}

\subjclass[2000]{35Q20, 35L67, 35C07, 35A01}


\keywords{Boltzmann equation, shock waves, cutoff soft potentials, existence}

\maketitle


\thispagestyle{empty}
\section{Introduction}
\subsection{Boltzmann equation for a binary gas mixture}
Under the assumption of the slab symmetry in space, the motion of a binary gas mixture is governed by the following one-dimensional two-species Boltzmann equations (cf.~\cite{ABT,KAT}):
\begin{equation}\label{1.1.1}
\left\{\begin{aligned}
&\pa_tF_A+v_1\pa_xF_A=Q^{AA}(F_A,F_A)+Q^{BA}(F_B,F_A),\\
&\pa_tF_B+v_1\pa_xF_B=Q^{AB}(F_A,F_B)+Q^{BB}(F_B,F_B),
\end{aligned}\right.
\end{equation}
or in the vector form
\begin{align}\label{1.1.1vf}
\pa_t\mathbf{F}+v_1\pa_x\mathbf{F}=\mathbf{Q(F)},
\end{align}
with $ \mathbf{F}=\mathbf{F}(t,x,v):=[F_A,F_B]^T$ and 
\begin{align}\label{def.qfv}
\mathbf{Q(F)}:=[\sum_{j=A,B}Q^{jA}(F_j,F_A),\sum_{j=A,B}Q^{jB}(F_j,F_B)]^T.
\end{align}
The unknown $F_{A}=F_A(t,x,v)$ is the velocity distribution function of the $A$-species gas particles and $F_B$ is that of the $B$-species gas particles, which have position $x\in \mathbb{R}$ and velocity $v=(v_1,v_2,v_3)\in \mathbb{R}^3$ at time $t\in \R$. The collision operator $Q^{ji}$ is defined by
\begin{align}\notag
Q^{ji}(F_{j},G_{i}):=\int_{\mathbb{R}^3\times \mathbb{S}^2}B^{ji}(|v-u|,\cos\theta)[F_{j}(u')G_{i}(v')-F_{j}(u)G_{i}(v)]\dd u\dd \sigma,
\end{align}
for $i,j\in \{A,B\}$. Here $v'=(v')^{ji}$ and $u'=(u')^{ji}$ are the velocities of $i$ and $j$- species after collision, and $\cos\theta:= \sigma\cdot (v-u)/|v-u|$. Let $m_i$ and $m_j$ be denoted as the mass of molecule of $i$ and $j$ species, respectively,  and 
$$
V:=\f{m_{i}v+m_j u}{m_i+m_{j}}
$$ 
as the center of momentum, then the post-collision velocities are computed as
\begin{align}\label{1.1.3}
v'=V+\f{m_{j}}{m_{i}+m_{j}}|v-u|\sigma,\quad u'=V-\f{m_{i}}{m_{i}+m_{j}}|u-v|\sigma,
\end{align}
in terms of the conservation of molecular momentum and energy that
\begin{align}\notag
m_i v+m_{j}u=m_{i}v'+m_{j}u',\quad m_i |v|^2+m_{j}|u|^2=m_{i}|v'|^2+m_{j}|u'|^2.
\end{align}
The collision kernel $B^{ji}$ is non-negative and takes the form of
$$B^{ji}(|v-u|,\cos\theta)=|v-u|^{\gamma}b^{ji}(\theta),\quad i,j\in \{A,B\},
$$
where $-3<\gamma\leq 1$ and $ b^{ji}(\theta)$ satisfies the Grad's angular cut-off assumption that
$$
0\leq b^{ji}(\theta)\leq C|\cos\theta\sin\theta|.
$$
It is well-known (eg.~\cite{ABT}) that corresponding to the global equilibrium of the system \eqref{1.1.1vf} or \eqref{1.1.1}, the following form 
\begin{align}\label{1.1.4.1}
\textbf{M}=[M_A,M_B]^T=\left[\f{\rho_Am^{3/2}_A}{(2\pi\theta)^{3/2}}
e^{-\f{m_A|v-u|^2}{2\theta}},\f{\rho_Bm^{3/2}_B}{(2\pi\theta)^{3/2}}e^{-\f{m_B|v-u|^2}{2\theta}}\right]^T
\end{align}
is called a bi-Maxwellian whose two components of the Maxwellian form have the same bulk velocities and temperatures. Here, the constants $\rho_{A},\rho_B>0$ represent respectively the density of $A$ and $B$-species, $u=(u_1,u_2,u_3)\in \mathbb{R}^3$ is the bulk velocity, and $\theta>0$ denotes the temperature. Note that the momentum or heat transfer between different species turns out to vanish when the gas mixture reaches into the equilibrium, cf.~\cite{ABT}. 

\subsection{Shock profile}
The shock profile for \eqref{1.1.1} is a planar travelling wave solution in the form of $\mathbf{F}=\widetilde{\mathbf{F}}(x-st,v):=[\widetilde{F}_A,\widetilde{F}_B]^T(x-st,v)$ with the shock speed $s$ to be specified later. We substitute this form into \eqref{1.1.1} and drop the tildes to obtain the following steady problem:
\begin{equation}\label{1.2.1}\left\{
\begin{aligned}
&(v_1-s)\pa_{x}F_A=Q^{AA}(F_A,F_A)+Q^{BA}(F_B,F_A),\\
&(v_1-s)\pa_{x}F_B=Q^{AB}(F_A,F_B)+Q^{BB}(F_B,F_B),
\end{aligned}\right.
\end{equation}
with the far-field conditions:
\begin{align}\label{1.2.2}
\lim_{x\rightarrow\pm\infty}[F_A,F_B]^T(x,v)=\textbf{M}_{\pm}(v)=[M_{A,\pm},M_{B,\pm}]^T(v),
\end{align}
where
\begin{equation}\label{def.ffdata.pm}
M_{i,\pm}=M_{i,\pm}(v):=\f{\rho_{i,\pm}m^{3/2}_i}{(2\pi\theta_{\pm})^{3/2}}
e^{-\f{m_i(|v_1-u_{\pm}|^2+|v_2|^2+|v_3|^2)}{2\theta_{\pm}}},\ i=A,B.
\end{equation} 
Note that  in terms of \eqref{def.ffdata.pm}, both the far-field data $\textbf{M}_{+}(v)$ and $\textbf{M}_{-}(v)$ at $x=\pm\infty$ in \eqref{1.2.2} are bi-Maxwellians. 

To derive the Rankine--Hugoniot conditions for the far-field  macroscopic fluid quantities $[\rho_{A,\pm},\rho_{B,\pm},u_{\pm},\theta_{\pm}]$ of \eqref{def.ffdata.pm} associated to the far-field conditions \eqref{1.2.2}, one should first investigate the local conservation laws from \eqref{1.1.1}. Thus, we define the fluid variables $\rho_{\alpha},u_{\alpha}=[u_{\alpha,1},u_{\alpha,2},u_{\alpha,3}]$ and $\theta_{\alpha}$ associated to the distribution $F_{\alpha}$ through the following moments:
\begin{equation}
\left\{\begin{aligned}
&\rho_{\alpha}(t,x)=\int_{\mathbb{R}^3}F_{\alpha}(t,x,v)\dd v,\nonumber\\ 
&\rho_{\alpha}u_{\alpha,i}(t,x)=\int_{\mathbb{R}^3}v_iF_{\alpha}(t,x,v)\dd v,\nonumber\\
&\f{3}{2}\rho_{\alpha}\theta_{\alpha}(t,x)+\f{1}{2}m_\alpha\rho_{\alpha}|u_{\alpha}|^2(t,x)
=\int_{\mathbb{R}^3}\f{m_{\alpha}|v|^2}{2}F_{\alpha}(t,x,v)\dd v,
\end{aligned}\right.
\quad\alpha\in\{A,B\}.
\end{equation}
It is well-known (eg.~\cite{ABT}) that the collision term $\mathbf{Q}(\mathbf{F})$ in \eqref{def.qfv} has the following six collision invariants:
\begin{align*}
\psi_{-1}=[1,0]^T,\ \psi_0=[0,1]^T;\ \psi_i=[m_Av_i,m_Bv_i]^T,\ i=1,2,3;\ \psi_4=[\f{m_{A}|v|^2}{2},\f{m_B|v|^2}{2}]^T,
\end{align*}
with
\begin{align}\label{1.2.2.2}
\int_{\mathbb{R}^3}\psi_j\cdot \textbf{Q}(\mathbf{F})\dd v=0,\quad j=-1,0,1,2,3,4.
\end{align}
For the equation \eqref{1.1.1vf}, as in \cite{Liu-Yang-Yu},  we make the decomposition that $\textbf{F}=\textbf{M}+\textbf{G}$ with 
\begin{align}\label{def.lm.ab}
\textbf{M}:=[\f{\rho_{A}m^{3/2}_A}{(2\pi\theta_{A})^{3/2}}
e^{-\f{m_A(|v_1-u_{A}|^2+|v_2|^2+|v_3|^2)}{2\theta_{A}}},\f{\rho_{B}m^{3/2}_B}{(2\pi\theta_{B})^{3/2}}
e^{-\f{m_B(|v_1-u_{B}|^2+|v_2|^2+|v_3|^2)}{2\theta_{B}}}]^T,
\end{align}
such that 
$$
\int_{\mathbb{R}^3}\psi_j\cdot \textbf{G}\dd v=0, \quad j=-1,\cdots, 4.
$$ 
Note that $\textbf{M}$ in \eqref{def.lm.ab} is not a bi-Maxwellian.   
Then, the fact that $\int_{\mathbb{R}^3}\psi_j\cdot\{\pa_t+v_1\pa_x \}\textbf{F}\dd v=\int_{\mathbb{R}^3}\psi_j\cdot \textbf{Q(F)}\dd v=0$ implies the following hydrodynamical-type system for the fluid variables $[\rho_{\alpha},u_{\alpha},\theta_{\alpha}]$ ($\alpha=A,B)$ with the coupling to $\textbf{G}$:
\begin{equation}\label{1.2.3}
\left\{\begin{aligned}
&\pa_t\rho_A+\pa_x(\rho_Au_{A,1})=0,\quad\pa_t\rho_B+\pa_x(\rho_Bu_{B,1})=0,\\
&\pa_t[m_{A}\rho_Au_{A,1}+m_{B}\rho_Bu_{B,1}]+\pa_x[m_A\rho_Au_{A,1}^2+m_B\rho_Bu_{B,1}^2
]+\pa_xP=-\int_{\mathbb{R}^3}v_1\pa_x \textbf{G}\cdot\psi_1\dd v,\\
&\pa_t[m_{A}\rho_Au_{A,i}+m_{B}\rho_Bu_{B,i}]+\pa_x[m_A\rho_Au_{A,1}u_{A,i}+m_B\rho_Bu_{B,1}u_{B,i}]\\
&\qquad\qquad\qquad\qquad\qquad\qquad\qquad=-\int_{\mathbb{R}^3}v_1\pa_x \textbf{G}\cdot\psi_i\dd v,\quad i=2,3,\\
&\pa_t[\mathcal{E}+\f{1}{2}(m_A\rho_A|u_A|^2+m_B\rho_B|u_B|^2)]+\pa_x[\f{1}{2}(m_A\rho_A|u_A|^2u_{A,1}+m_B\rho_B|u_B|^2u_{B,1})]\\
&\qquad\qquad\qquad\qquad+\pa_x[
\f{5}{2}(\rho_A\theta_Au_{A,1}+\rho_B\theta_Bu_{B,1})]=-\int_{\mathbb{R}^3}
v_1\pa_x\textbf{G}\cdot\psi_4\dd v,
\end{aligned}\right.
\end{equation}
where $P=\f23\mathcal{E}=\rho_A\theta_A+\rho_B\theta_B$. Note that the fluid-type system \eqref{1.2.3} is not closed since $\mathbf{G}$ depends on the higher-order moments of $\textbf{F}$. By substituting the ansatz $\mathbf{F}=\widetilde{\mathbf{F}}(x-st,v)$ into \eqref{1.2.3}, dropping the tildes and integrating from $-\infty$ to $+\infty$, we get the following closed Rankine--Hugoniot conditions:
\begin{equation}\label{1.2.4}
\left\{\begin{aligned}
&-s(\rho_{A,+}-\rho_{A,-})+\rho_{A,+}u_{+}-\rho_{A,-}u_-=0,\\
&-s(\rho_{B,+}-\rho_{B,-})+\rho_{B,+}u_{+}-\rho_{B,-}u_-=0,\\
&-s(\bar{m}_+u_+-\bar{m}_-u_-)+\bar{m}_+u_+^2-\bar{m}_-u_-^2+\b{P}_{+}-\b{P}_{-}=0,\\
&-s(\b{e}_+-\b{e}_-)+\f{5}{2}\b{\rho}_+\theta_+u_+
-\f52\b{\rho}_-\theta_-u_-+\f12\b{m}_+u_+^3-\f12\b{m}_-u_-^3=0,
\end{aligned}\right.
\end{equation}
where
\begin{align}\label{1.2.5}
\b{m}_{\pm}=m_{A}\rho_{A,\pm}+m_{B}\rho_{B,\pm},\ \b{\rho}_{\pm}=\rho_{A,\pm}+\rho_{B,\pm},\ \b{P}_{\pm}=\b{\rho}_{\pm}\theta_{\pm},\ \b{e}_{\pm}=\f{3}{2}\b{\rho}_{\pm}\theta_{\pm}+\f12\b{m}_{\pm}u_{\pm}^2.
\end{align}
Without loss of generality, we assume that $u_-=0$ and $\theta_-=1$ in the sequel. Denote the parameter 
\begin{align}\label{def.etas}
\eta=\f{\rho_{A,-}}{\rho_{A,+}}-1,
\end{align}
which measures the strength of shock profile. Then a direct calculation from \eqref{1.2.4} implies that the speed of shock is given by
\begin{align}\notag
s=\pm\sqrt{\f{5\b{\rho}_-}{3\b{m}_-+4\b{m}_-\eta}},
\end{align}
and the admissible macroscopic quantities $[\rho_{A,+},\rho_{B,+},u_+,\theta_+]$ of the downstream state
composes the following one-dimensional Hogoniot curve parameterized by $\eta$ in \eqref{def.etas} as
\begin{align}
&\rho_{B,-}/\rho_{B,+}=\rho_{A,-}/\rho_{A,+}=1+\eta,\nonumber\\
&u_+=-s\eta,\quad \theta_+-1=\eta-\f{5s^2}{3c_0^2}\eta(1+\eta),\notag
\end{align}
with $c_0$ being the sound speed. In this paper, we only consider the shock profile with positive speed, that is, 
$$
c_0=\sqrt{\f{5\b{\rho}_-}{3\b{m}_-}},\quad 
s=\sqrt{\f{5\b{\rho}_-}{3\b{m}_-+4\b{m}_-\eta}}.
$$
We also assume the shock profile is compressive, in other words, the following entropy condition holds:
\begin{align}\label{1.2.7-1}
c_0>s.
\end{align}
\begin{remark}
From the kinetic point of view, \eqref{1.2.7-1} can be derived from the following entropy inequality
\begin{align}
\pa_x\int_{\mathbb{R}^3}(v_1-s)\mathbf{F}\cdot\log\mathbf{F}\dd v=\int_{\mathbb{R}^3}\mathbf{Q(F)}\cdot\log\mathbf{F}\dd v\leq 0,\nonumber
\end{align}
when the shock wave is weak.
\end{remark}

Fixing the upstream state $\mathbf{M}_-=[M_{A,-},M_{B,-}]^T$, we look for the shock profile solution $\mathbf{F}=[F_A,F_B]^T$ in the form of
\begin{equation}\label{perturb}
F_{A}=M_{A,-}+M_{A,-}^{1/2}f_A,\quad F_{B}=M_{B,-}+M_{B,-}^{1/2}f_B.
\end{equation}
For any bi-Maxwellian $\textbf{M}=[M_A,M_B]^T,$ the linearized collision operator $\mathbf{L}_\textbf{M}$ is defined by
\begin{equation}\label{1.2.10}
	\mathbf{L}_\textbf{M}\mathbf{f}=\mathbf{L}_\textbf{M}\mathbf{f}(x,v)=
	\left[\begin{aligned}
		&-\f1{M_{A}^{1/2}}\sum_{j=A,B}\{Q^{j A}(M_{j},M_{A}^{1/2}f_A)+Q^{j A}(M_{j}^{1/2}f_j,M_{A})\}\\
		&-\f1{M_{B}^{1/2}}\sum_{j=A,B}\{Q^{j B}(M_{j},M_{B}^{1/2}f_B)+Q^{j B}(M_{j}^{1/2}f_j,M_{B})\}
	\end{aligned}
	\right].
\end{equation} 
Substituting \eqref{perturb} into \eqref{1.2.1}, we see that $\mathbf{f}:=[f_A,f_B]^T=[f_A,f_B]^T(x,v)$ solves
\begin{align}\label{1.2.8}
(v_1-s)\pa_x\mathbf{f}+\mathbf{Lf}=\mathbf{\Gamma}(\mathbf{f},\mathbf{f}),
\end{align}
with two far fields
\begin{align}\label{1.2.9}
\lim_{x\rightarrow-\infty}[f_A,f_B]^T(x,v)=[0,0]^T,\quad\lim_{x\rightarrow +\infty}[f_A,f_B]^T(x,v)=[\f{M_{A,+}-M_{A,-}}{\sqrt{M_{A,-}}},\f{M_{B,+}-M_{B,-}}{\sqrt{M_{B,-}}}]^T,
\end{align}
where we have denoted $\mathbf{L}=\mathbf{L}_{\textbf{M}_-}$ 
and
\begin{equation}\notag
\mathbf{\Gamma}(\mathbf{f},\mathbf{g})=
\left[\begin{aligned}
&\sum_{j=A,B}\f1{M_{A,-}^{1/2}}Q^{j A}(M_{j,-}^{1/2}f_{j},M_{A,-}^{1/2}g_A)\\
&\sum_{j=A,B}\f1{M_{B,-}^{1/2}}Q^{j B}(M_{j,-}^{1/2}f_{j},M_{B,-}^{1/2}g_B)
\end{aligned}
\right]:
=\left[\begin{aligned}
&\sum_{j=A,B}\Gamma^{j A}(f_j,g_A)\\
&\sum_{j=A,B}\Gamma^{j B}(f_{j},g_B)
\end{aligned}
\right],
\end{equation}
for $\mathbf{f}=[f_A,f_B]^T$ and $\mathbf{g}=[g_A,g_B]^T$. For brevity, we write $\mathbf{\Gamma}(\mathbf{f})=\mathbf{\Gamma}(\mathbf{f},\mathbf{f})$ in the sequel.

\subsection{Notations}
Throughout this paper, $C$ denotes a generic positive constant which may vary from line to line.  $C_a,C_b,\cdots$ denote the generic positive constants depending on $a,~b,\cdots$, respectively, which also may vary from line to line. $A\lesssim B$ means that there exists a constant $C>0$ so that $A\leq C B$ and $A\lesssim_{a}B$ means that the constant depends on $a$. For any vector or matrix $\mathbf{M}$, we denote $\mathbf{M}^T$ as its transpose. For any scalar function, we choose the convention to identify the space by the name of the concerned variables. For example, we denote $L^p_v:=L^p(\mathbb{R}^3,\dd v)$ and denote its norm as $|\cdot|_{L^p_v}$. Similarly, $L^p(\mathbb{R},\dd y)$ is denoted by $L^p_y$ while its norm is denoted by $|\cdot|_{L^{p}_y}$. 
For any vector-valued functions, the spatial-velocity mixed $L^p$ norm is denoted by $\|\cdot\|_{L^p}$. For simplicity, we use $\nu^{1/2}$ to denote the $2\times2$ diagonal matrix diag$(\nu_A^{1/2},\nu_B^{1/2})$. Similarly, the $2\times2$ diagonal matrix diag$(\nu_A^{-1/2},\nu_B^{-1/2})$ is denoted by $\nu^{-1/2}$. The same notations also apply to the bi-Maxwellians, such as
$\mathbf{M}^{-1/2}:=\text{diag}(M_{A}^{-1/2},M_B^{-1/2})$. The weighted norm $|\cdot|_{L^{2}_{\nu}}$ is defined by
$$
|\mathbf{f}|_{L^{2}_{\nu}}:=|\nu^{1/2}\mathbf{f}|_{L^{2}_{v}}.
$$

\subsection{Main results}
Let \begin{align}\label{w}
w_{q,\vartheta}(v)=e^{q|v|^2}(1+|v|^2)^{\vartheta}
\end{align}
be the velocity weight function and $\vep=c_0-s>0$ be the strength of the shock.

\begin{theorem}\label{thm1.1}
Let $-3<\gamma\leq1$, $\theta=\f{2}{3-\gamma}$, $\vartheta>2$, and $m_A\neq m_B$, then there exist positive constants $\varpi$, $q$ and $\vep_0>0$, such that for $0<\vep\leq \vep_0$, the stationary problem \eqref{1.2.1}, \eqref{1.2.2}, \eqref{1.2.4} and \eqref{1.2.7-1} admits a unique solution  $\mathbf{F}=[F_A,F_B]^T$, up to a spatial translation,  which satisfies
\begin{align}\label{1.0.2}
\bigg\|w_{q,\vartheta}\mathbf{M}^{-1/2}_-\pa_x^k[\mathbf{F}-\mathbf{M}_-]\bigg\|_{L^{\infty}}\lesssim_k\vep^{k+1},
\end{align}
where $k\geq 0$ is an arbitrary integer. 
It further holds that 
\begin{align}\label{1.0.3}
\bigg|\nu^{1/2}\mathbf{M}_-^{-1/2}\pa_x^k[\mathbf{F}-\mathbf{M}_{-}]\bigg|_{L^2_{v}}\lesssim_k \vep^{k+1}e^{-\vep\varpi|x|}+\vep^{k+2}e^{-\varpi|\vep x|^{\theta}}, x\leq 0.
\end{align}
and
\begin{align}\label{1.0.3-1}
\bigg|\nu^{1/2}\mathbf{M}_-^{-1/2}\pa_x^k[\mathbf{F}-\mathbf{M}_{+}]\bigg|_{L^2_{v}}\lesssim_k \vep^{k+1}e^{-\vep\varpi|x|}+\vep^{k+2}e^{-\varpi|\vep x|^{\theta}}, x\geq 0.
\end{align}
Moreover, for any integer $k\geq0$, $\pa^k_x\mathbf{F}$ is continuous away from $\mathbb{R}\times\{v_1=s\}$.
\end{theorem}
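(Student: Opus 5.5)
We follow the Caflisch--Nicolaenko strategy: separate fast microscopic dynamics from slow macroscopic dynamics, reduce the latter to a Burgers-type ODE by a Lyapunov--Schmidt argument, and solve the coupled problem by a contraction in a space of functions decaying at a prescribed rate in $\vep x$. We work with the unknown $\mathbf f$ in \eqref{1.2.8}. Let $\mathbf P$ be the $L^2_v$-orthogonal projection onto $\ker\mathbf L$, which is spanned by $\mathbf M_-^{1/2}\psi_j$, $j=-1,\dots,4$. Since only $x$-dependence matters, the transverse momenta decouple, and $(v_1-s)$ restricted to the remaining macroscopic span is a symmetric matrix. Near $s=c_0$ this matrix has exactly one eigenvalue of order $\vep$, with eigenvector $\mathbf r_0$ (the acoustic mode); the assumption $m_A\neq m_B$ is used to check that this eigenvalue is simple and nondegenerate for the mixture. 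We then make the ansatz
\[
\mathbf f(x,v)=\vep\,\phi(\vep x)\,\mathbf r_0+\vep^2\mathbf h(\vep x,v),
\]
and rescale $y=\vep x$. Projecting the equation onto $\mathbf r_0$ and using the Rankine--Hugoniot conditions \eqref{1.2.4} gives, to leading order, the viscous Burgers profile equation
\[
\kappa\phi'=-\phi+\alpha\phi^2+(\text{terms in }\mathbf h),
\]
where $\kappa>0$ is computed from $\langle\mathbf L^{-1}(\cdots),(\cdots)\rangle$ and $\alpha\neq0$ is a genuine-nonlinearity coefficient. The explicit tanh profile solves the leading-order equation. Linearizing about it produces an operator with a one-dimensional kernel (translation) and the exponential decay $e^{-\varpi|y|}$; the kernel is removed by fixing the translation, e.g.\ $\phi(0)$.

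The remainder splits as $\mathbf h=\mathbf h_{\rm mac}+\mathbf h_{\rm mic}$. The non-acoustic macroscopic modes satisfy nondegenerate ODEs with $O(1)$ eigenvalues, so they are solved explicitly with exponential decay. The microscopic part satisfies
\[
(v_1-s)\vep\partial_y\mathbf h_{\rm mic}+\mathbf L\mathbf h_{\rm mic}=\text{source}(\phi,\mathbf h).
\]
For soft potentials $\mathbf L$ has no spectral gap, so we replace decay in $y$ by the accelerated weight. Concretely, we introduce $e^{\varpi|y|^\theta}$ together with the Maxwellian-type weight $w_{q,\vartheta}$ and use the interpolation
\[
\nu(v)\gtrsim\langle v\rangle^{\gamma}\quad\text{balanced against}\quad e^{-c\vep|x|/\langle v\rangle^{1-\gamma}\cdots}.
\]
Optimizing over $|v|$ yields exactly the sub-exponential rate $|\vep x|^{2/(3-\gamma)}$. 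This rate is why the bounds \eqref{1.0.3}, \eqref{1.0.3-1} carry a second term $\vep^{k+2}e^{-\varpi|\vep x|^\theta}$.

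For the linear microscopic problem we proceed in two steps.
\begin{itemize}
\item First, an $L^2$ energy estimate in weighted norms controls $|\mathbf h_{\rm mic}|_{L^2_\nu}$ via the coercivity of $\mathbf L$ on $(\ker\mathbf L)^\perp$.
\item Second, an $L^\infty$ bound comes from writing $\mathbf L=\nu-\mathbf K$ and integrating along backward characteristics in $x$ for fixed $v$, toward $-\infty$ when $v_1>s$ and toward $+\infty$ when $v_1<s$.
\end{itemize}
The acceleration consists of conjugating by a weight (for example $e^{\lambda\langle v\rangle^{\cdots}}$ or a shift making the effective collision frequency $\nu+\vep\lambda(\ldots)$) so that it is uniformly positive. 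Applying the Duhamel formula twice (bi-characteristics) and splitting off the region near $v_1=s$, where the characteristic speed $v_1-s$ degenerates, bounds the $\mathbf K\mathbf K$ term by the $L^2$ norm. The grazing set $\{v_1=s\}$ only costs a factor $|v_1-s|^{-1}$ after the change of variables, and this is integrable.

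This gives a closed $L^2$–$L^\infty$ estimate for the linear problem, uniform in $\vep$. The nonlinear estimates for $\mathbf\Gamma$ in $w_{q,\vartheta}$-weighted $L^\infty$ (using $\vartheta>2$) then close a contraction for $(\phi-\phi_0,\mathbf h)$ for $\vep\le\vep_0$. Uniqueness up to translation comes from the same contraction once the translation is normalized. Estimates for $\partial_x^k$ follow by differentiating the equation, which commutes with the transport term, and induction, each $\partial_x$ producing a factor $\vep$. Continuity away from $v_1=s$ follows from the characteristic representation.

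The main obstacle is the linear microscopic $L^\infty$ estimate. It must hold uniformly in the slow scale $\vep$, with the $\vep\partial_y$ transport, no spectral gap, and the grazing singularity all present at once. We also need to show that the acceleration weight is compatible with the $e^{\varpi|y|^\theta}$ decay without losing the $\vep$-uniformity. We would first try to prove it with a time-like parameter obtained by writing the stationary equation as an evolution in $x$, separately for each sign of $v_1-s$.
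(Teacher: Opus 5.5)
Your outline shares the paper's main ingredients: a Burgers reduction, an $L^2$--$L^\infty$ scheme with bi-characteristics, the weights $w_{q,\vartheta}$ and $e^{\varpi|y|^\theta}$, and the heuristic origin of $\theta=2/(3-\gamma)$. However, the linear remainder problem as you set it up does not close.

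\textbf{The remainder equation.} Two readings of your setup are possible, and both fail.
\begin{itemize}
\item If the transport acts only on $\mathbf h_{\mathrm{mic}}$, then applying $I-P$ to the full equation puts $\vep P[(v_1-s)\pa_y\mathbf h_{\mathrm{mic}}]$ and $\vep(I-P)[(v_1-s)\pa_y\mathbf h_{\mathrm{mac}}]$ into your ``source''. These are $y$-derivatives of moments of the unknown, and neither your energy estimate nor the characteristic $L^\infty$ bound controls them.
\item If the transport acts on all of $\mathbf h$, coercivity of $\mathbf L$ on $(\ker\mathbf L)^\perp$ only gives $\|\nu^{1/2}\mathbf h_{\mathrm{mic}}\|_{L^2}$. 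The bi-characteristic step, however, ends with the $L^2$ norm of the full unknown. The non-acoustic modes are tied to $\mathbf h_{\mathrm{mic}}$ only algebraically through the conservation laws; there is no damping that ``solves them with exponential decay''. The acoustic direction is essentially invisible to the conservation laws, since $\langle(v_1-s)\mathbf r_0,\mathbf r_0\rangle=O(\vep)$. Imposing that $\mathbf h$ has no acoustic component forces you to project the kinetic equation, and the projected operator is no longer a transport plus $\nu-K$.
\end{itemize}
The missing idea is the Caflisch--Nicolaenko construction the paper uses:
\begin{itemize}
\item the approximate generalized eigenfunction $\phi_\vep$ of \eqref{3.1.1}, with $\langle(v_1-s)\chi_i,\phi_\vep\rangle=0$ and $\langle(v_1-s)\phi_\vep,\phi_\vep\rangle=-\vep$;
\item projections $\pi,\pi_*$ satisfying \eqref{3.1.7}, so that $(I-\pi_*)$ annihilates $(v_1-s)\pa_y(z\phi_\vep)$ and no $z'$ enters \eqref{3.1.14};
\item the compensated operator $\mathbf H=(I-\pi_*)\mathbf L(I-\pi)+\mathbf L_2$. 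It is coercive on the whole space \eqref{3.1.16}, has the form $\nu-\bar K$ with the same kernel bounds as $K$, and coincides with $\mathbf L_1$ along solutions (this is the equivalence of \eqref{3.1.15} and \eqref{3.1.17}).
\end{itemize}
Only then is the remainder governed by a single transport equation with a uniformly coercive operator, to which the $L^2$--$L^\infty$ scheme applies. You also need the explicit subtraction $\psi_0=p^2\psi_\infty$, because the remainder does not vanish at $y=+\infty$.

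\textbf{The $L^\infty$ step.} Neither acceleration device you propose makes the collision frequency uniformly positive. A weight depending only on $v$ commutes with $(v_1-s)\pa_y$ and leaves $\nu$ unchanged. A spatial exponential weight adds a term $\pm\beta(v_1-s)$ of indefinite sign, which dominates $\vep^{-1}\nu\sim\vep^{-1}\langle v\rangle^{\gamma}$ for large $|v|$.

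The paper's acceleration is a $v$-dependent rescaling of the characteristic time, $dY/d\tau=(\varrho+|v|^2)^{|\gamma|/2}(v_1-s)$. The damping in \eqref{3.2.15} is then $\vep^{-1}\hat\nu$ with $\hat\nu=(\varrho+|v|^2)^{|\gamma|/2}\nu\ge\hat\nu_0>0$. The choice $\varrho=1+|\gamma|s^2/4$ guarantees $\pa_{u_1}\hat u_1\ge c(1+|u|^2)^{|\gamma|/2}$. Hence, once $\tau-\tau_1<\vep/N$ is excised, the change of variables $u_1\mapsto y_1'=y_1-(\tau-\tau_1)\hat u_1$ has Jacobian at least $c\vep/N$.

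In this parametrization the grazing set produces no singular factor at all. Your claim that it ``costs $|v_1-s|^{-1}$, which is integrable'' is false as stated, since $|u_1-s|^{-1}$ is not integrable in $u_1$.

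If you really parametrize by $y$ over the whole half-line, Cauchy--Schwarz in $y$ gives $|u_1-s|^{-1/2}$ instead. The Duhamel kernel then has mass $\nu(v)^{-1}$, and \eqref{2.1.10} gives $\nu^{-1}|w_{q,\vartheta}\bar K_r(\mathbf h/w_{q,\vartheta})|\lesssim_m\langle v\rangle^{-1}\|\mathbf h\|_{L^\infty}$. That route could avoid acceleration altogether, but it is a different argument and has to be carried out.

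Finally, $m_A\neq m_B$ plays no role in the simplicity of the acoustic eigenvalue. In the paper it only affects the form of the Carleman representation \eqref{c4}.
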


\begin{remark}
Our method of this paper also works for the case $m_A=m_B$ under which the two-species Boltzmann equations \eqref{1.1.1} reduce to the single one. The only difference is the disappearance of Carleman's representation \eqref{c4} when $m_A=m_B$; however,  this will not lead to any difficulty.
\end{remark}

\subsection{Relevant literature}
The construction of small-amplitude shock profiles for the cutoff Boltzmann equation under the hard sphere assumption was first done in \cite{NT} and \cite{CN} using the Lyapunov-Schmidt method. Later, in \cite{Liu-Yu}, an approach via the macro-micro decomposition around local Maxwellians was developed to establish the positivity of shock profiles by studying their large time asymptotic stability for the time-evolutionary problem. In a similar macro-micro framework with inducing a small Knudsen number, the hydrodynamic limit with shock waves of the Boltzmann equation was proved in \cite{Yu}, see also \cite{Yu1,Yu2}. Using the approximation by a local Maxwellian whose fluid quantities are determined by the Navier-Stokes shock, an alternative approach of proving the existence of shock profiles for the Boltzmann equation with hard sphere collisions in Sobolev spaces was given in \cite{Metivier-Zumbrun}. For the Landau equation with Coulomb potentials, to overcome the large-velocity  degeneracy in the hypocoercivity estimates, a regularization method on the collision operator was developed in \cite{Albritton-Bedrossian-Novack}, together with the general strategy in \cite{Metivier-Zumbrun}, to build the construction of the Landau shock profile. Interested readers may further refer to \cite{Wynter} for an extension to the non-cutoff Boltzmann equation. Through the center manifold analysis in the steady regime, a time-asymptotic method using the pointwise Green’s functions with the application to boundary layers and shock wave was developed in \cite{Liu-Yu1}. From the dynamical systems point of view, the center manifold was constructed under the reduction to the canonical form in \cite{Pogan-Zumbrun,Pogan-Zumbrun1,Zumbrun}. We also mention the relevant results \cite{BGS,Bernhoff-Golse} for half-space problems and phase transition, \cite{Caflisch,Caflisch-Liu} for large shocks of discrete velocity models, and \cite{Bouchut,Golse} for BGK-type models.

Our solution space for proving Theorem \ref{thm1.1} in this paper would be space-velocity weighted $L^2-L^\infty$ spaces. The $L^\infty$ control of the linearized Boltzmann operator is motivated by the $L^2-L^\infty$ interplay method in \cite{Guo} in the study of Boltzmann equation on bounded domains. A polynomial-exponential velocity weight and a decomposition on the linearized operator as introduced in \cite{SG} are used to control the collision terms. For the steady case with soft potential interactions, the loss of collision dissipation in large velocities causes essential trouble to bound the $L^\infty$ norm. In \cite{DHWZ}, the authors made a crucial observation to define a speeded backward bi-characteristic for soft potentials such that the new collision frequency in the steady case has a strictly positive lower bound. To be more generic, we construct in this paper the shock profile for a binary gas mixture where the masses of two-species particles can be disparate. In such case or  more general multi-species situation, we mention \cite{BBBG,Briant} for properties of the linearized collision operator, \cite{BD} for the spatially inhomogeneous well-posedness on the torus close to thermal equilibrium, \cite{BGPCS} for the one dimensional case, and \cite{DL-JSP} for the dynamic stability of rarefaction waves for the Vlasov-Poisson-Boltzmann system.

\subsection{Outline of the paper}
The rest of this paper is arranged as follows. In Section \ref{sec2}, we collect some preliminary estimates on collision operators. Section \ref{sec3} is devoted to constructing the shock profile with small amplitude and proving our main result Theorem \ref{thm1.1}. In the Appendix \ref{sec.app}, we list some basic estimates that have been used in the previous sections.   

\section{Preliminary estimates on collision operator}\label{sec2}
In this section, we show several estimates on the two-species collision operators for our later use. The following lemmas are concerned with the linearized collision operator $\textbf{L}_{\textbf{M}}$ defined in \eqref{1.2.10} along a fixed bi-Maxwellian $\textbf{M}=[M_A,M_B]^T$, where
$$
M_j=\f{m^{3/2}_j}{(2\pi)^{3/2}}
e^{-\f{m_j|v|^2}{2}},\qquad j=A,B.
$$

\begin{lemma}[\cite{ABT}] 
The operator $\mathbf{L}_{\mathbf{M}}$ is self-adjoint and non-negative definite. The kernel of $\mathbf{L}_{\mathbf{M}}$ is a 6-dimension space which is spanned by the following bases:
\begin{equation}\label{2.1.2}
\left\{\begin{aligned}
&\chi_{-1}=[M_A^{1/2},0]^T,\quad\chi_{0}=[0,M_B^{1/2}]^T,\\
&\chi_i=[m_Av_iM_A^{1/2},m_Bv_iM_B^{1/2}]^T,\quad i=1,2,3, \\
&\chi_4=[\f{m_A|v|^2-3}{2}M_A^{1/2},\f{m_B|v|^2-3}{2}M_B^{1/2}]^T.
\end{aligned}\right.
\end{equation}
\end{lemma}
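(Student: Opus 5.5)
The plan is to verify the three claims (self-adjointness, non-negativity, and the six-dimensional kernel) directly from the weak form of the collision operators. In this form the symmetries of the two-species collision map \eqref{1.1.3} can be exploited.

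\emph{Symmetric weak form.} For fixed species $i,j$, the change of variables $(v,u,\sigma)\mapsto(v',u',\sigma')$ with $\sigma'=(v-u)/|v-u|$ is an involution with unit Jacobian. It preserves $|v-u|$ and $\cos\theta$, hence preserves $B^{ji}$. Moreover, $M_i(v')M_j(u')=M_i(v)M_j(u)$, because the bi-Maxwellian has common (zero) bulk velocity and unit temperature, so the exponent $-\tfrac12(m_i|v|^2+m_j|u|^2)$ is a conserved energy. Swapping the roles $(i,v)\leftrightarrow(j,u)$ turns $B^{ji}$ into $B^{ij}$; here one uses $b^{ij}=b^{ji}$ after the reflection $\sigma\mapsto-\sigma$, which is the standard symmetry assumption of \cite{ABT}. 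Combining the two changes of variables, I would derive for $\mathbf{f}=[f_A,f_B]^T$ and $\mathbf{g}=[g_A,g_B]^T$ the identity
\begin{align*}
\langle \mathbf{L}_{\mathbf{M}}\mathbf{f},\mathbf{g}\rangle
=\frac14\sum_{i,j\in\{A,B\}}\int B^{ji}M_iM_j\,\Delta_{ij}(\mathbf{f})\,\Delta_{ij}(\mathbf{g})\,\dd u\,\dd v\,\dd\sigma,
\end{align*}
where
\begin{align*}
\Delta_{ij}(\mathbf{f})=\frac{f_i(v')}{M_i^{1/2}(v')}+\frac{f_j(u')}{M_j^{1/2}(u')}-\frac{f_i(v)}{M_i^{1/2}(v)}-\frac{f_j(u)}{M_j^{1/2}(u)}.
\end{align*}
This identity is symmetric in $\mathbf{f}$ and $\mathbf{g}$, which gives self-adjointness, initially on a dense class of test functions. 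Taking $\mathbf{g}=\mathbf{f}$ shows $\langle\mathbf{L}_{\mathbf{M}}\mathbf{f},\mathbf{f}\rangle\ge0$. The only delicate point in this step is bookkeeping the factor $1/4$ and the cross terms $i\ne j$ correctly.

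\emph{Kernel.} By the identity, $\mathbf{L}_{\mathbf{M}}\mathbf{f}=0$ forces $\Delta_{ij}(\mathbf{f})=0$ almost everywhere on the support of $B^{ji}$, for every pair $(i,j)$.
\begin{itemize}
\item The case $i=j$ is the classical one-species functional equation. Its measurable solutions are collision invariants, so $\phi_i:=f_i/M_i^{1/2}=a_i+b_i\cdot v+c_i|v|^2$.
\item The cross condition $i\neq j$ then says that $a_A+a_B+b_A\cdot v+b_B\cdot u+c_A|v|^2+c_B|u|^2$ is conserved under every mixed collision. Conservation of $m_Av+m_Bu$ and of $m_A|v|^2+m_B|u|^2$ are the only vector and quadratic conserved quantities, so $b_A/m_A=b_B/m_B$ and $c_A/m_A=c_B/m_B$. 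The constants $a_A$ and $a_B$ remain free.
\end{itemize}
This leaves exactly six free parameters. Rewriting the resulting $\phi$ in the basis \eqref{2.1.2} (the $\chi_4$ entry absorbs the $-3/2$ shifts into $a_A,a_B$) shows that the kernel is spanned by $\chi_{-1},\dots,\chi_4$. Conversely, each $\chi_k$ lies in the kernel by conservation, and the six functions are linearly independent, which gives dimension $6$.

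The main obstacle is the regularity needed to solve the functional equations. Under the cutoff assumption with a general $b^{ji}$, one must justify that a measurable $\phi\in L^2$ satisfying $\Delta=0$ almost everywhere is a collision invariant; I would invoke the classical result (via Carleman/Arkeryd or \cite{ABT}) instead of reproving it. A second point to check is that the mixed-species conservation forces the momentum and energy coefficients to be proportional to the masses. This follows because, for $m_A\ne m_B$, the mixed collision manifold \eqref{1.1.3} is rich enough: varying $\sigma\in\mathbb{S}^2$ with $V$ and $|v-u|$ fixed separates linear and quadratic terms.
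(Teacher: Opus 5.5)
The paper gives no proof of this lemma; it simply imports it from \cite{ABT}. Your argument is the standard self-contained route and it is correct, including the factor $\frac14$. It combines the symmetric weak form, obtained from the pre/post-collisional involution together with the swap $(i,v)\leftrightarrow(j,u)$, $\sigma\mapsto-\sigma$, with the collision-invariant functional equation.

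What your route buys over the citation is that it exposes the hypotheses the lemma silently uses, and you should state them explicitly. Besides $B^{AB}=B^{BA}$, which you note and which the conservation laws \eqref{1.2.2.2} also require, you need a nondegeneracy condition, e.g.\ each $b^{ji}>0$ on a set of positive measure. The paper only assumes $0\le b^{ji}\le C|\cos\theta\sin\theta|$, and if $b^{AB}\equiv 0$ the kernel is $10$-dimensional, since mass, momentum and energy of each species are then conserved separately. Correspondingly, ``varying $\sigma\in\mathbb{S}^2$'' should read ``varying $(v-u,\sigma)$ over the support of $B^{ji}$''.

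On the other hand, $m_A\neq m_B$ plays no role in the cross-species step, and that step can be made fully explicit. With $w=v-u$ one has
\[
v'-v=\tfrac{m_j}{m_i+m_j}(|w|\sigma-w),\qquad u'-u=-\tfrac{m_i}{m_j}(v'-v),\qquad |v'|^2-|v|^2=2V\cdot(v'-v).
\]
So the condition $\Delta_{ij}=0$ for $\phi_i=a_i+b_i\cdot v+c_i|v|^2$ becomes
\[
\Big[\big(b_i-\tfrac{m_i}{m_j}b_j\big)+2\big(c_i-\tfrac{m_i}{m_j}c_j\big)V\Big]\cdot(v'-v)=0 .
\]
Here $V$ and $(w,\sigma)$ are independent variables, and for fixed $V$ the vectors $v'-v$ fill a set not contained in any plane. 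Hence both coefficients vanish, for arbitrary masses. This one line replaces your heuristic that $m_Av+m_Bu$ and $m_A|v|^2+m_B|u|^2$ are ``the only'' conserved quantities.
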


The next lemma follows from direct calculations.

\begin{lemma}\label{leL}
The operator $\mathbf{L}_{\mathbf{M}}$ satisfies the decomposition
$\mathbf{L}_{\mathbf{M}}=\nu_{\mathbf{M}}(v)-K$. Here,
$$
\nu_{\mathbf{M}}=\left[
  \begin{array}{cc}
    \nu_A, & 0 \\
    0, & \nu_{B}\\
  \end{array}
\right]
$$ 
is the diagonal matrix with the elements given as
\begin{align}\label{2.1.2.1}
\nu_{i}(v)=\sum_{j=A,B}\int_{\mathbb{R}^3\times \mathbb{S}^2}
B^{ji}(|v-u|,\sigma)M_j(u)\dd u\dd\sigma,\quad i=A,B,
\end{align}
and the operator $K=[K_A,K_B]^T$ is denoted by
$$\begin{aligned}
{K}_{i}\mathbf{f}=&\sum_{j=A,B}\int_{\mathbb{R}^3\times \mathbb{S}^2}
B^{ji}(|v-u|,\sigma)M_j^{1/2}(u)\\
&\times[
M_j^{1/2}(u')f_{i}(v')+M_{i}^{1/2}(v')f_j(u')-M_{i}^{1/2}(v)f_j(u)]\dd u\dd\sigma,\quad i=A,B.
\end{aligned}$$
Moreover, we have
\begin{align}\notag
\nu_0(1+|v|)^{\gamma}\leq \nu_{i}(v) \leq \nu_1(1+|v|)^{\gamma}, \quad i=A,B,
\end{align}
for some positive constants $\nu_0,\nu_1>0$.
\end{lemma}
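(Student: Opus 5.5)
The decomposition itself is obtained by splitting the linearization \eqref{1.2.10} term by term. I will write out the two contributions for species $i$,
\[
Q^{ji}(M_j,M_i^{1/2}f_i)\quad\text{and}\quad Q^{ji}(M_j^{1/2}f_j,M_i),
\]
each as a gain part minus a loss part. The loss part of $Q^{ji}(M_j,M_i^{1/2}f_i)$ is
\[
-\int B^{ji}M_j(u)\,M_i^{1/2}(v)f_i(v)\,\dd u\,\dd\sigma .
\]
After multiplying by $-M_i^{-1/2}(v)$ and summing over $j$, this gives exactly $\nu_i(v)f_i(v)$ with $\nu_i$ as in \eqref{2.1.2.1}. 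Everything else goes into $K_i$:
\begin{itemize}
\item the gain term $M_j(u')M_i^{1/2}(v')f_i(v')$;
\item the gain term $M_j^{1/2}(u')f_j(u')M_i(v')$;
\item the loss term $M_j^{1/2}(u)f_j(u)M_i(v)$.
\end{itemize}
To put $K_i$ in the stated form, I divide by $M_i^{1/2}(v)$ and use the energy conservation identity $M_j(u')M_i(v')=M_j(u)M_i(v)$. This holds because $M_j(u)M_i(v)\propto e^{-(m_j|u|^2+m_i|v|^2)/2}$ with unit temperature. Concretely, for the first gain term I write $M_j(u')M_i^{1/2}(v') = M_j^{1/2}(u')\,[M_j(u')M_i(v')]^{1/2}$. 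Dividing by $M_i^{1/2}(v)$ then produces the factor $M_j^{1/2}(u)M_j^{1/2}(u')$, and the other two terms are handled the same way. The signs match the claimed formula: $L=\nu-K$ with $K$ collecting the gain terms positively and the non-$\nu$ loss term negatively.

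For the bounds on $\nu_i$, the cutoff assumption gives $\int_{\mathbb{S}^2}b^{ji}\,\dd\sigma=:\beta_{ji}\in(0,\infty)$. Here I assume $b^{ji}\not\equiv 0$, or at least that $b^{ii}\not\equiv0$; this positivity of the angular integral is needed for the lower bound. Hence
\[
\nu_i(v)=\sum_j\beta_{ji}\int|v-u|^\gamma M_j(u)\,\dd u .
\]
It then suffices to show the standard estimate $\int|v-u|^\gamma M_j(u)\,\dd u\approx(1+|v|)^\gamma$ for $-3<\gamma\le1$.

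I split into $|v|\le R$ and $|v|\ge R$.
\begin{itemize}
\item \emph{Bounded $v$.} The integral is a continuous, strictly positive function of $v$. Local integrability of $|v-u|^\gamma$ uses $\gamma>-3$, and positivity holds because the integrand is positive. So it is bounded above and below on compacts.
\item \emph{Large $|v|$, main region.} On $|u|\le|v|/2$ we have $|v-u|\sim|v|$, so this region contributes $\sim|v|^\gamma$ times a mass bounded below.
\item \emph{Large $|v|$, remaining region.} On $|u|>|v|/2$ the Gaussian gives $e^{-c|v|^2}$ decay, multiplied by either polynomial growth ($\gamma>0$) or the locally integrable singularity ($\gamma<0$). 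This is $o(|v|^\gamma)$.
\end{itemize}
Combining the two ranges gives the two-sided bound with constants $\nu_0,\nu_1$.

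There is no real obstacle here. The only points needing care are the bookkeeping of the Maxwellian factors via the two-species conservation identity, and the singular integral for $\gamma<0$. The latter is handled by the split above.
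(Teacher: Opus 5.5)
Your proposal is correct and follows the route the paper intends. The paper gives no details beyond saying the lemma follows from direct calculations; your gain/loss bookkeeping via $M_j(u')M_i(v')=M_j(u)M_i(v)$, together with the two-region estimate of $\int_{\mathbb{R}^3}|v-u|^\gamma M_j(u)\,\dd u\approx(1+|v|)^\gamma$, supplies those calculations, and your remark that the lower bound needs $\int_{\mathbb{S}^2}b^{ji}\,\dd\sigma>0$ for some $j$ correctly flags an assumption the paper leaves implicit.
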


To the end, we denote $\nu=\nu_{\mathbf{M}_-}$ for simplicity.
We also introduce the projection operator $P$ as
\begin{align}\label{2.1.5}
P\mathbf{f}:=\sum_{j=-1}^{4}\langle\chi_j,\mathbf{f}\rangle\chi_j.
\end{align} 
We have the following important coercivity inequality. 

\begin{lemma}\label{spectralgap}
There exists a positive constant $c_0>0$ such that
\begin{align}\label{2.1.4}
\langle\mathbf{L}_{\mathbf{M}}\mathbf{f},\mathbf{f}\rangle\geq c_0\left|\nu^{1/2}(I-P)\mathbf{f}\right|_{{L^2_v}}^2.
\end{align}
\end{lemma}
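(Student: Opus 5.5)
The plan is to prove the coercivity estimate \eqref{2.1.4} in the standard two-step way: first show that $\mathbf{L}_{\mathbf{M}}=\nu_{\mathbf{M}}-K$ is a relatively compact perturbation of multiplication by $\nu$, then upgrade the resulting $L^2_v$ spectral gap to the $\nu$-weighted bound. Since $\mathbf{L}_{\mathbf{M}}$ is self-adjoint and nonnegative with kernel spanned by \eqref{2.1.2} (by the first lemma of this section), $P$ is the orthogonal projection onto $\ker\mathbf{L}_{\mathbf{M}}$ once the $\chi_j$ are orthonormalized. It therefore suffices to prove \eqref{2.1.4} for $\mathbf{f}\perp\ker\mathbf{L}_{\mathbf{M}}$, since $\langle\mathbf{L}_{\mathbf{M}}\mathbf{f},\mathbf{f}\rangle=\langle\mathbf{L}_{\mathbf{M}}(I-P)\mathbf{f},(I-P)\mathbf{f}\rangle$.

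\emph{Step 1: compactness of $K$.} Write each $K_i$ as a sum of a loss part $\int B^{ji}M_j^{1/2}(u)M_i^{1/2}(v)f_j(u)\,\dd u\,\dd\sigma$ and gain parts involving $f_i(v')$ and $f_j(u')$. The loss part has an explicit kernel, which is Hilbert--Schmidt for $\gamma>-3$ after a truncation near $u=v$. For the gain parts, use the Carleman representation adapted to the disparate masses \eqref{1.1.3}, which is available because $m_A\neq m_B$, to write them as integral operators $\int k_i(v,u)f(u)\,\dd u$. One then checks that the kernels, with the singular region $|v-u|\le\delta$ and the large-velocity region $|v|\ge R$ removed, are Hilbert--Schmidt. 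The removed pieces have operator norm tending to $0$ as $\delta\to0$ and $R\to\infty$, uniformly in the weighted sense $\nu^{-1/2}K\nu^{-1/2}$. For soft potentials I would run this argument with $\nu^{-1/2}K\nu^{-1/2}$ as a compact operator on $L^2_v$, since $\nu$ decays at infinity.

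\emph{Step 2: spectral gap by contradiction.} Suppose there were $\mathbf{f}_n\perp\ker\mathbf{L}_{\mathbf{M}}$ with $|\nu^{1/2}\mathbf{f}_n|_{L^2_v}=1$ and $\langle\mathbf{L}_{\mathbf{M}}\mathbf{f}_n,\mathbf{f}_n\rangle\to0$. Set $\mathbf{g}_n=\nu^{1/2}\mathbf{f}_n$, so that $|\mathbf{g}_n|_{L^2_v}=1$ and
\[
1-\langle \nu^{-1/2}K\nu^{-1/2}\mathbf{g}_n,\mathbf{g}_n\rangle\to0 .
\]
Pass to a weakly convergent subsequence $\mathbf{g}_n\rightharpoonup\mathbf{g}$. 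By compactness, $\langle \nu^{-1/2}K\nu^{-1/2}\mathbf{g},\mathbf{g}\rangle=1\ge|\mathbf{g}|^2_{L^2_v}$, so $\mathbf{g}\neq0$. The limit $\mathbf{f}=\nu^{-1/2}\mathbf{g}$ then satisfies $\langle\mathbf{L}_{\mathbf{M}}\mathbf{f},\mathbf{f}\rangle\le|\nu^{1/2}\mathbf{f}|^2-1\le0$. By nonnegativity, $\mathbf{f}\in\ker\mathbf{L}_{\mathbf{M}}$. The orthogonality $\mathbf{f}\perp\ker\mathbf{L}_{\mathbf{M}}$ passes to the weak limit because each $\nu^{-1/2}\chi_j\in L^2$. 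Hence $\mathbf{f}=0$, a contradiction. This yields a constant $c_0>0$ in \eqref{2.1.4}.

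The main obstacle is Step 1 for disparate masses. The gain-term kernels $k_i$ for the cross collisions $Q^{BA},Q^{AB}$ are not the classical Grad kernels, so their Hilbert--Schmidt bounds must be derived from the modified Carleman representation. Handling the singular factor $|v-u|^\gamma$ near $\gamma\to-3$ needs the angular cutoff $b^{ji}\lesssim|\cos\theta\sin\theta|$. Alternatively, one could cite \cite{BBBG,Briant}, which establish exactly this compactness and coercivity for mixtures.
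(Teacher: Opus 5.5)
Your proposal is correct and follows essentially the same route as the paper. The paper uses a compact-plus-small splitting of $K$ built from the Carleman representation; its only genuinely new ingredient is the cross-species kernel \eqref{c4}, which it truncates to $\{|u'-v|+|v|\le N\}$, controlling the tail via the extra Gaussian factor in $v_{\perp}$ available when $m_A\neq m_B$. It then applies Guo's weak-limit contradiction argument from \cite[Lemma 3]{G1}, which you write out explicitly where the paper only cites it. The one caveat is your fallback of citing \cite{BBBG,Briant}: as Remark \ref{remarkspectral} points out, the available mixture coercivity results are for hard potentials, which is precisely why the case $-3<\gamma<0$ has to be proved here.
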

\begin{remark}\label{remarkspectral}
The coercivity \eqref{2.1.4} is well-known in the context of mono-species Boltzmann equations, for both hard and soft molecule interactions, see \cite{G1} and \cite{M1} for instance. For the two-species Boltzmann equations, \cite{DJMZ} gives a constructive proof of \eqref{2.1.4} when $m_A=m_B$. Later, Briant and Daus \cite{BD} proved \eqref{2.1.4} when $m_A\neq m_B$. Notice that their results only cover the hard potential case $0\leq \gamma\leq 1$. For the sake of completeness, we will present the proof of \eqref{2.1.4} for soft potential $-3<\gamma<0$ at the end of this section.
\end{remark}

In a similar way to the mono-species case, we will define several decompositions of operator $K$. The first one is $K=K_+-K_-$, which is given in terms of its component as
\begin{equation}\label{gain}\begin{aligned}
{K}_{+,i}\mathbf{f}=&\sum_{j=A,B}\int_{\mathbb{R}^3}\int_{\mathbb{S}^2}B^{ji}(|v-u|,\sigma)M_{j}^{1/2}(u)\\
&\times[
M_j^{1/2}(u')f_{i}(v')+M_{i}^{1/2}(v')f_j(u')]\dd\sigma\dd u,\quad i=A,B,
\end{aligned}\end{equation}
and 
\begin{equation}\label{loss}
\begin{aligned}
{K}_{-,i}\mathbf{f}=&\sum_{j=A,B}\int_{\mathbb{R}^3}\int_{\mathbb{S}^2}B^{ji}(|v-u|,\sigma)M_{i}^{1/2}(v)M_{j}^{1/2}(u)f_j(u)\dd\sigma\dd u,\quad i=A,B.
\end{aligned}\end{equation}

To deal with the singularity for $-3<\gamma<0$, we introduce a smooth cut-off function $0\leq \chi_{m}(\tau)\leq 1,$ with $0< m\leq 1$, such that
$$
\chi_{m}(\tau)=0, \text{ for }\tau\leq m,\quad\chi_{m}(\tau)=1, \text{ for }\tau\geq 2m.
$$
Then we define $K_m=[K_{m,A},K_{m,B}]^T$ where
\begin{align}
K_{m,i}\mathbf{f}=&\sum_{j=A,B}\int_{\mathbb{R}^3}\int_{\mathbb{S}^2}B^{ji}(|v-u|,\sigma)(1-\chi_m(|v-u|))M_j^{1/2}(u)\nonumber\\
&\times[
M_{j}^{1/2}(u')f_{i}(v')+M_{i}^{1/2}(v')f_j(u')-M_{i}^{1/2}(v)f_j(u)]\dd\sigma\dd u,\quad i=A,B,\nonumber
\end{align}
and $K_r=K-K_m$. Firstly, we have the smallness and fast decay of $K_m$.

\begin{lemma}\label{leK}
Let $0\leq q<\min\{m_A,m_B\}$, then
\begin{align}\label{2.1.6}
|K_m\mathbf{f}|_{L^{\infty}_v}\lesssim_q m^{3+\gamma}e^{-\f{q|v|^2}{4}}|\mathbf{f}|_{L^{\infty}_v}.
\end{align}
\end{lemma}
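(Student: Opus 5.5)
We bound the three pieces of $K_{m,i}\mathbf f$ separately: the loss term $M_i^{1/2}(v)f_j(u)$ and the two gain terms $M_j^{1/2}(u')f_i(v')$ and $M_i^{1/2}(v')f_j(u')$. In each, $|f|$ is replaced by $|\mathbf f|_{L^\infty_v}$. The cutoff $1-\chi_m(|v-u|)$ restricts the integration to $|v-u|\le 2m$. Using $b^{ji}\lesssim 1$ on $\mathbb S^2$, the relative-speed integral is
\begin{align*}
\int_{|v-u|\le 2m}|v-u|^{\gamma}\,\dd u\lesssim m^{3+\gamma},
\end{align*}
which is finite because $\gamma>-3$. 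This is the source of the factor $m^{3+\gamma}$; it remains to extract the Gaussian decay $e^{-q|v|^2/4}$ from the Maxwellian factors.

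For the loss term the integrand carries $M_i^{1/2}(v)M_j^{1/2}(u)\le M_i^{1/2}(v)$, and $M_i^{1/2}(v)\lesssim e^{-m_i|v|^2/4}\le e^{-q|v|^2/4}$ since $q<\min\{m_A,m_B\}$. The factor $M_j^{1/2}(u)$ can simply be bounded by a constant here.

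For the gain terms the key observation is that on the region $|v-u|\le 2m\le 2$ all of $u$, $u'$ and $v'$ stay close to $v$. Indeed, by \eqref{1.1.3},
\begin{align*}
|v'-v|\le |V-v|+\tfrac{m_j}{m_i+m_j}|v-u|\le 2|v-u|\le 4,
\end{align*}
and similarly $|u'-v|\le 4$ and $|u-v|\le 2$. Hence, for $w\in\{u,u',v'\}$,
\begin{align*}
|w|^2\ge \tfrac12|v|^2-C,
\end{align*}
so every Maxwellian factor obeys $M_k^{1/2}(w)\lesssim e^{-m_k|v|^2/8}$. The first gain term contains $M_j^{1/2}(u)M_j^{1/2}(u')$ and the second contains $M_j^{1/2}(u)M_i^{1/2}(v')$; either product is $\lesssim e^{-\min\{m_A,m_B\}|v|^2/4}\le e^{-q|v|^2/4}$.

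Combining the three pieces and summing over $j$ gives $|K_{m,i}\mathbf f(v)|\lesssim_q m^{3+\gamma}e^{-q|v|^2/4}|\mathbf f|_{L^\infty_v}$, which is \eqref{2.1.6}. The only step requiring care is this localization of the post-collision velocities for disparate masses $m_A\neq m_B$. It works because the coefficients $\frac{m_j}{m_i+m_j}$ and $\frac{m_i}{m_i+m_j}$ in \eqref{1.1.3} are at most $1$, and $V$ is a convex combination of $v$ and $u$. No Carleman representation is needed, since the proof uses only the crude bound $|f|\le|\mathbf f|_{L^\infty_v}$ rather than any smoothing of the gain operator.
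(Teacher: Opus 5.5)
Your proof is correct and uses the same mechanism as the paper: bound $f$ by $|\mathbf f|_{L^\infty_v}$, integrate $|v-u|^\gamma$ over the ball $|v-u|\le 2m$ to get $m^{3+\gamma}$, and transfer Gaussian decay to $v$ by proximity. The difference is which Maxwellian carries the decay.

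The paper keeps only the factor $M_j^{1/2}(u)$, which multiplies all three terms of $K_{m,i}\mathbf f$. It bounds the rest of the bracket by $C|\mathbf f|_{L^\infty_v}$ and converts $e^{-m_j|u|^2/4}$ into $e^{-q|v|^2/4}$ on $|u-v|\le 2m$. So no loss/gain splitting is needed, and no localization of $u'$, $v'$ through \eqref{1.1.3}. The step you single out as the delicate one can therefore be bypassed entirely.

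The price of the paper's shortcut is that the conversion is lossy. One only has $|u|\ge |v|-2m$, so the cross term $e^{qm|v|}$ must be absorbed by the leftover factor $e^{-(m_j-q)|u|^2/4}$. This forces $q<m_j$ strictly and gives a constant that blows up as $q\to m_j$. That is where the strict hypothesis $q<\min\{m_A,m_B\}$ and the $q$-dependence in $\lesssim_q$ come from.

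Your route uses $M_i^{1/2}(v)$ directly in the loss term and two half-strength Gaussians $e^{-m_k|v|^2/8}$ in each gain term. It costs some collision-geometry bookkeeping, but it reaches the endpoint $q=\min\{m_A,m_B\}$ with a constant independent of $q$.
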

\begin{proof}
	The inequality \eqref{2.1.6} follows from the fact that
	\begin{align*}
		|K_{m,i}\mathbf{f}|_{L^{\infty}_v}&\leq C|\mathbf{f}|_{L^{\infty}_v}\int_{\mathbb{R}^3}|v-u|^\gamma M_j^{1/2}(u)(1-\chi_m(|v-u|))\dd u\notag\\
		&\leq C|\mathbf{f}|_{L^{\infty}_v}e^{-\f{q|v|^2}{4}}\int_{|v-u|\leq m}|v-u|^\gamma e^{-\f{(m_j-q)|u|^2}{4}}\dd u\notag\\
		&\leq C_q m^{3+\gamma}e^{-\f{q|v|^2}{4}}|\mathbf{f}|_{L^{\infty}_v}.
	\end{align*}
\end{proof}

It is important to write $K_r$ as an integral operator. This takes advantage of the Carleman's representation, see Lemma \ref{lmA.2} in the Appendix.

\begin{lemma}\label{lmK} 
For $i\in\{A,B\}$, there exist functions $k^{ji}(v,\eta)$, $j\in\{A,B\}$, such that
\begin{align}\notag
K_{r,i}\mathbf{f}=\sum_{j=A,B}\int_{\mathbb{R}^3}k^{ji}_r(v,\eta)f_j(\eta)\dd \eta.
\end{align}
Moreover, there exists a positive constant $c_1$ depending only on $m_A$ and $m_B$, such that the following estimates hold:
\begin{align}\label{2.1.8}
|k_{r}^{ji}(v,\eta)|&\leq C_{\gamma}|v-\eta|^\gamma e^{-c_1|v|^2-c_1|u|^2}+\f{C_{\gamma} m^{\gamma-1}}{|v-\eta|(1+|v|+|\eta|)^{1-\gamma}}e^{-c_1|v-\eta|^2-\f{c_1||v|^2-|\eta|^2|^2}{|v-\eta|^2}},
\end{align}	
and
\begin{align}\label{2.1.9}
|k^{ji}_r(v,\eta)|&\leq C_{\gamma}|v-\eta|^\gamma e^{-c_1|v|^2-c_1|u|^2} +|v-\eta|^{\f{3-\gamma}2}e^{-c_1|v-\eta|^2-\f{c_1||v|^2-|\eta|^2|^2}{|v-\eta|^2}}.
\end{align}		
Furthermore, there exists a positive constant $q_1$, such that for $0\leq q \leq q_1$ and $\vartheta\geq 0$, it holds that
\begin{equation}\label{2.1.10}
\int_{\mathbb{R}^3}|k_r^{ji}(v,\eta)|\cdot \frac{(1+|v|^2)^{\vartheta}e^{q|v|^2}}{(1+|\eta|^2)^{\vartheta}e^{q|\eta|^2}}d\eta\leq C_\gamma m^{\gamma-1}(1+|v|)^{\gamma-1},
\end{equation}
and 	
\begin{align}\label{2.1.11}
\int_{\mathbb{R}^3}|k_r^{ji}(v,\eta)|\cdot \frac{(1+|v|^2)^{\vartheta}e^{q|v|^2}}{(1+|\eta|^2)^{\vartheta}e^{q|\eta|^2}} d\eta\leq C_\gamma (1+|v|)^{-1}.
\end{align}
\end{lemma}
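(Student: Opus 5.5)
We split $K_{r,i}$ into its loss part (coming from $K_{-,i}$ truncated by $\chi_m$) and its two gain parts (from $K_{+,i}$ truncated by $\chi_m$), and treat each separately.

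\emph{Loss part.} The term $-M_i^{1/2}(v)f_j(u)$ is already an integral operator in $u=\eta$. Its kernel is
$$\chi_m(|v-\eta|)\,|v-\eta|^\gamma M_i^{1/2}(v)M_j^{1/2}(\eta)\int_{\mathbb{S}^2}b^{ji}\,\dd\sigma ,$$
which directly gives the first term $|v-\eta|^\gamma e^{-c_1|v|^2-c_1|\eta|^2}$ in \eqref{2.1.8}--\eqref{2.1.9}. (Here the $u$ in the statement should be read as $\eta$.)

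\emph{Gain parts.} The first gain term is $M_j^{1/2}(u)M_j^{1/2}(u')f_i(v')$ and the second is $M_j^{1/2}(u)M_i^{1/2}(v')f_j(u')$. In each we change variables so that the argument of $f$ becomes $\eta$, using the Carleman representation of Lemma \ref{lmA.2} adapted to unequal masses $m_i\ne m_j$. For $f_i(v')$ we parametrize the collision by $\eta=v'$ and a remaining velocity ranging over the appropriate plane (for unequal masses, a sphere/plane determined by energy-momentum conservation), with Jacobian of order $|v-\eta|^{-1}$ times powers of $|v-u|$. The resulting kernel has the form
$$|v-\eta|^{-1}\int_{E_{v\eta}} |v-u|^{\gamma-1}\chi_m(|v-u|)\,b\,M_j^{1/2}(u)M_j^{1/2}(u')\,\dd E .$$
The product of Gaussians restricted to $E_{v\eta}$ is handled by completing squares, exactly as in Grad's classical computation. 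This produces the factor $e^{-c_1|v-\eta|^2-c_1||v|^2-|\eta|^2|^2/|v-\eta|^2}$, with $c_1$ depending on $m_A,m_B$; $c_1$ is a nonuniform constant because the mass ratios enter the geometry of $E_{v\eta}$. To bound $|v-u|^{\gamma-1}\chi_m$ on $E_{v\eta}$ we use two estimates:
\begin{itemize}
\item for \eqref{2.1.8}, the pointwise bound $\le m^{\gamma-1}$ near the singularity, together with the Gaussian integration over the plane, which yields the decay $(1+|v|+|\eta|)^{\gamma-1}$ (as in Guo/Strain--Guo soft potential kernels);
\item for \eqref{2.1.9}, integration of the singular power without using the cutoff, giving $|v-\eta|^{(3-\gamma)/2}$-type factors after Cauchy--Schwarz.
\end{itemize}
The second gain term $f_j(u')$ is treated the same way, with the roles of the masses swapped in Lemma \ref{lmA.2}.

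\emph{Weighted integral bounds.} Given the kernel estimates, \eqref{2.1.10}--\eqref{2.1.11} follow the standard argument. The weight ratio $e^{q(|v|^2-|\eta|^2)}$ is absorbed by the exponent $-c_1||v|^2-|\eta|^2|^2/|v-\eta|^2$ for $q\le q_1$ small, via $q\,|a|\,|v-\eta|\le \frac{c_1}{2}a^2/|v-\eta|^2+C|v-\eta|^2$ with $a=|v|^2-|\eta|^2$ and $|\,|v|^2-|\eta|^2|= |a|$. The polynomial ratio is bounded by $(1+|v-\eta|)^{2\vartheta}$ and absorbed by the remaining Gaussian. We then use the decomposition $\eta-v=z$ and write $z$ in components parallel and perpendicular to $v$. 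The integral of $|z|^{-1}e^{-c|z|^2-c|z\cdot v|^2/|z|^2}\,\dd z$ gives $(1+|v|)^{-1}$, which is \eqref{2.1.11}. Combined with the factor $(1+|v|)^{\gamma-1}$ this yields \eqref{2.1.10} with constant $m^{\gamma-1}$; the loss kernel contributes an integrable $|z|^\gamma$ times a Gaussian in $v$.

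\emph{Main obstacle.} The hard step is the Carleman change of variables and the Gaussian completion for unequal masses. The exponents in $M_j(u)M_j(u')$ and $M_j(u)M_i(v')$ involve different mass ratios, so one must verify that the quadratic form restricted to $E_{v\eta}$ is still uniformly coercive in the directions $v-\eta$ and $(|v|^2-|\eta|^2)/|v-\eta|$. I would first write the quadratic form explicitly in coordinates adapted to $E_{v\eta}$, check positivity via $\min(m_A,m_B)$, and fall back on taking $c_1$ small enough if sharp constants are not needed.
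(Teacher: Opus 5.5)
Your treatment of the loss term and of the plane-type gain terms (the mono-species ones and \eqref{c3}) matches the paper, which simply defers those to the mono-species computation. The gap is in the one term the paper's proof actually handles: $M_j^{1/2}(u)M_i^{1/2}(v')f_j(u')$ with $j\neq i$ and $m_i\neq m_j$.

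This term is not the $f_i(v')$ term with the masses swapped. With $v$ and $u'$ fixed, the admissible $u$ satisfy $|m_j(u-v)-(m_i+m_j)(u'-v)|=m_i|u-v|$. For $m_i\neq m_j$ this is an Apollonius sphere centred at $z=\f{m_ju'-m_iv}{m_j-m_i}$, of radius $m_i|v-u'|/|m_i-m_j|$. That is why \eqref{c4} integrates over $|\eta|=|v-u'|/|m_i-m_j|$ rather than over $\mathbb{R}^2$. The Guo/Strain--Guo mechanism you invoke (a Gaussian in the plane variable localizing it near $-v_\perp$, which produces $(1+|v|+|\eta|)^{\gamma-1}$) is a plane argument, and so is your Cauchy--Schwarz route to \eqref{2.1.9}. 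You give nothing that replaces them on the sphere. The step you call the main obstacle, coercivity of the quadratic form, is in the paper just the positive definiteness of $\mathbf{U}$ in Lemma \ref{lmA.2}. It yields only the Grad-type factor, not decay in $|v|+|\eta|$.

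The missing idea is that this kernel is actually of loss type. It rests on two facts.

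First, \eqref{1.1.3} gives $v-u'=\f{m_j(v-u)}{m_i+m_j}+\f{m_i|v-u|}{m_i+m_j}\sigma$, so $|v-u'|\le|v-u|$. Hence on the sphere $|v-u|^{\gamma-1}\le|v-u'|^{\gamma-1}$, with no use of the cutoff. The sphere has area $O(|v-u'|^2)$, so the prefactor $|u'-v|^{-1}$ leaves $|u'-v|^{\gamma}$.

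Second, besides the Grad-type factor $e^{-\lambda_{ij}[|u'-v|^2+||u'|^2-|v|^2|^2/|u'-v|^2]}$, \eqref{c4} carries the transverse factor $e^{-|m_i-m_j|^2|v_\perp|^2/4}$. The parallel part satisfies $|v_\parallel|^2=\f14\big|\f{|u'|^2-|v|^2}{|u'-v|}-|u'-v|\big|^2$, which the Grad exponent controls. The two factors together give $e^{-c|v|^2-c|u'|^2}$.

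So the \eqref{c4} kernel is bounded by $C|v-\eta|^{\gamma}e^{-c_1|v|^2-c_1|\eta|^2}$, uniformly in $m$. This is the first term in \eqref{2.1.8}--\eqref{2.1.9}, not the Grad-type second term, and \eqref{2.1.10}--\eqref{2.1.11} then hold for it trivially once $q<c_1$. Without exploiting the $v_\perp$ factor, or giving some substitute analysis on the sphere, your plan does not establish the claimed bounds for this term.
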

\begin{proof}
Recall the Carleman representation in Lemma \ref{lmA.2}. If the terms in $K_{r,i}$ takes the forms of \eqref{c1}-\eqref{c3}, which have similar structures to the mono-species Boltzmann equation, one can use the arguments as in \cite{DHWY} to yield our estimate. Hence, we only consider \eqref{c4}. Define
\begin{align}\label{2.1.7-1}
k^{ji}(v,u'):=&\f{1}{|u'-v|}e^{-\left|\f{\sqrt{m_i}+\sqrt{m_j}}{\sqrt{m_i}-\sqrt{m_j}}\right|^2\tilde{V}^T\mathbf{U}\tilde{V}}
e^{-\f{|m_i-m_j|^2|v_{\perp}|^2}{4}}\nonumber
\\
&\quad\times\int_{|\eta|=\f{|v-u'|}{|m_i-m_j|}}\f{b^{ji}(\theta)\chi_m(|v-u(\eta,v,u')|)}{|v-u(\eta,v,u')|^{1-\gamma}}
e^{-\f{\sqrt{m_im_j}|\sqrt{m_im_j}\eta-z(v,u')|^2}{2}}\dd \eta.
\end{align}
Notice that $U$ defined in \eqref{DefU} is positive definite. Then
\begin{align}\label{2.1.7}
e^{-\left|\f{\sqrt{m_i}+\sqrt{m_j}}{\sqrt{m_i}-\sqrt{m_j}}\right|^2\tilde{V}^T\mathbf{U}\tilde{V}}\lesssim e^{-\lambda_{ij}\left[|u'-v|^2+\f{||u'|^2-|v|^2|^2}{|u'-v|^2}\right]},
\end{align}
for some positive constants $\lambda_{ij}$ depending only on $m_i$ and $m_j$. Now, by the collision geometry \eqref{1.1.3}, we have
$$|v-u'|=|\f{m_j(v-u)}{m_i+m_j}-\f{m_i|v-u|}{m_i+m_j}\sigma|\leq |v-u|.
$$
Then, by \eqref{2.1.7}, $k^{ji}$ is bounded by
$$C_{m_i,m_j}|u'-v|^{\gamma}e^{-\lambda_{ij}\left[|u'-v|^2+\f{||u'|^2-|v|^2|^2}{|u'-v|^2}\right]}e^{-\f{|m_i-m_j|^2|v_{\perp}|^2}{4}},
$$
where the universal constant $C_{m_i,m_j}$ depends only on $m_i$ and $m_j$. Now, we decompose $v=v_{\parallel}+v_{\perp}$, where
$$
v_{\parallel}=\f{( v, u'-v)}{|u'-v|}\f{u'-v}{|u'-v|}.
$$
Here, we have denoted $(\cdot,\cdot)$ as the Euclidean inner product in $\mathbb{R}^3.$ Firstly, it is direct to compute that
\begin{align}\label{2.1.7-2}
|v_{\parallel}|^2=\f{1}{4}\left|\f{|u'|^2-|v|^2}{|u'-v|}-|u'-v|\right|^2.
\end{align}
Then we have
$$
\begin{aligned}
\lambda_{ij}\bigg[|u'-v|^2&+\f{||u'|^2-|v|^2|^2}{|u'-v|^2}\bigg]+\f{|m_i-m_j|^2|v_{\perp}|^2}{4}\\
&\geq \f{\lambda_{ij}|u'-v|^2}{2}+\lambda_{ij}|v_{\parallel}|^2+\f{|m_i-m_j|^2|v_{\perp}|^2}{4}\\
&\geq \f{\tilde{\lambda}_{ij}}{2}|u'|^2+\f{\tilde{\lambda}_{ij}}{2}|v|^2.
\end{aligned}
$$
Here we have denoted $\tilde{\lambda}_{ij}:=\min\{\lambda_{ij},\f{|m_i-m_j|^2}{4}\}$. Therefore, $k^{ji}$ is further bounded by
$$C_{m_i,m_j}|u'-v|^{\gamma}e^{-\f{\tilde{\lambda}_{ij}}{2}|u'|^2-\f{\tilde{\lambda}_{ij}}{2}|v|^2}.
$$
This proves \eqref{2.1.8} and \eqref{2.1.9} for $k^{ji}$. Finally, \eqref{2.1.10} and \eqref{2.1.11} follows directly from these two bounds, see \cite{Glassey}.
\end{proof}

Now, we are ready to prove Lemma \ref{spectralgap}.

\begin{proof}[Proof of Lemma \ref{spectralgap}] 
By Remark \ref{remarkspectral}, we only need to consider the case $-3<\gamma<0$. We will follow the idea by Guo \cite{G1}. The key point is to decompose $K=K_c+K_s$, where $K_c$ is compact from $L^2_{\nu}\times L^{2}_{\nu}$ to $L^{2}_{\nu}$ and
$$
|\langle K_s \mathbf{f},\mathbf{f}\rangle|\leq {\eta}|\mathbf{f}|_{L^2_{\nu}}^2,
$$
for small $\eta>0$. This relies heavily on our Carlemann representation, see \eqref{c1}-\eqref{c4} in Appendix. Notice that the representations \eqref{c1}-\eqref{c3} are more or less the same as the formulas  in \cite[(23) and (36)]{G1}, while the form like \eqref{c4} does not appear. Thus, it suffices to consider \eqref{c4} in the two-species case. We define $k^{ji}_c(v,u'):=k^{ji}(v,u')\Fi_{\{|u'-v|+|v|\leq N\}}$, where $k^{ji}$ is given by \eqref{2.1.7-1}. We also denote
$$K^{ji}_cf_j:=\int_{\mathbb{R}^3}k^{ij}_c(v,u')f_j(u')\dd u' \text{ and } K^{ji}_sf_j:=\int_{\mathbb{R}^3}[k^{ij}-k^{ij}_c](v,u')f_j(u')\dd u'.$$
By \eqref{2.1.7}, $k^{ji}_{c}(v,u')$ is bounded by
$$
C_{m}\f{e^{-\lambda_{ij}\left[|u'-v|^2+\f{||u'|^2-|v|^2|^2}{|u'-v|^2}\right]}}{|u'-v|}\f{|u'-v|^2}{|m_i-m_j|^2}\lesssim
e^{-\lambda_{ij}\left[|u'-v|^2+\f{||u'|^2-|v|^2|^2}{|u'-v|^2}\right]}|u'-v|\Fi_{\{|u'-v|+|v|\leq N\}}.
$$
Then, it is direct to check that $K^{ji}_c$ is a Hilbert-Schmidt operator, and thus a compact operator from $L^2_{\nu}$ to $L^2_{\nu}$. Now, we show that $K^{ij}_s$ is small. Recall that $|u'-v|+|v|\geq N.$\\

Case 1: $|u'-v|\geq \f{1}{2}[|u'-v|+|v|]\geq \f{N}{2}.$ In this case, we also have $$|u'|\leq |u'-v|+|v|\leq 2|u'-v|.$$
Use this and exponential decay estimate \eqref{2.1.7} to bound this part of $K^{ij}_sf_j$ by
\begin{align}
C_{m}e^{-\f{\lambda_{ij}|v|^2}{4}-\f{\lambda_{ij}N^2}{16}}\int_{\mathbb{R}^3}|f_j(u')|e^{-\f{\lambda_{ij}|v-u'|^2}{4}-\f{\lambda_{ij}|u'|^2}{16}}|v-u'|
\dd u'\lesssim_m e^{-\f{\lambda_{ij}|v|^2}{4}-\f{\lambda_{ij}N^2}{16}}|\nu^{1/2}_jf_j|_{L^2_{v}}.\nonumber
\end{align}

Case 2: $|v|\geq \f{1}{2}[|u'-v|+|v|]\geq \f{N}{2}.$ As before, we decompose $v=v_{\parallel}+v_{\perp}$.
Notice that $|v_{\parallel}|^2+|v_{\perp}|^2=|v|^2\geq \f{1}{4}[|u'-v|+|v|]^2\geq\f{N^2}{4}$. We further split this part into
$$\begin{aligned}
&|v_{\parallel}|^2\geq \f{|v|^2}{2}\geq \f{1}{8}[|u'-v|+|v|]^2\geq \f{N^2}{8},\\
&|v_{\perp}|^2\geq \f{|v|^2}{2}\geq \f{1}{8}[|u'-v|+|v|]^2\geq \f{N^2}{8}.
\end{aligned}
$$
Then, over the first sub-domain, by using \eqref{2.1.7} and \eqref{2.1.7-2}, $K^{ij}_sf_j$ is bounded by
\begin{align}
C_{m}&\int_{\{|u'-v|+|v|\geq N\}}|f_j(u')|e^{-\lambda_{ij}\left[|u'-v|^2+\f{||u'|^2-|v|^2|^2}{|u'-v|^2}\right]}|v-u'|\dd u'\nonumber\\
&\lesssim_m
\int_{\{|u'-v|+|v|\geq N\}}|f_j(u')|e^{-2\lambda_{ij}|v_{\parallel}|^2}|v-u'|\dd u\nonumber\\
&\lesssim_m e^{-\f{\lambda_{ij}|v|^2}{4}-\f{\lambda_{ij}N^2}{16}}\int|f_j(u')|
e^{-\f{\lambda_{ij}|u'|^2}{16}-\f{\lambda_{ij}|u'-v|^2}{16}}|u'-v|\dd u'\lesssim e^{-\f{\lambda_{ij}|v|^2}{4}-\f{\lambda_{ij}N^2}{16}}|\nu^{1/2}_jf_j|_{L^2_{v}}.\nonumber
\end{align}
Over the second sub-domain, we use the exponential decay $e^{-\f{|m_i-m_j|^2|v_{\perp}|^2}{4}}$ (see \eqref{2.1.7-1}) to bound $K^{ij}_sf_j$ by
\begin{align}
C_m&e^{-\f{|m_i-m_j|^2|v|^2}{32}-\f{|m_i-m_j|^2N^2}{128}}\int |f_j(u')|e^{-\f{|m_i-m_j|^2|u'|^2}{128}-\f{|m_i-m_j|^2|u'-v|^2}{128}}|u'-v|\dd u'\nonumber\\
&\lesssim_m e^{-\f{|m_i-m_j|^2|v|^2}{32}-\f{|m_i-m_j|^2N^2}{128}}|\nu^{1/2}_jf_j|_{L^2_{v}}.\nonumber
\end{align}
In summary, we have proved that
$$\int_{\mathbb{R}^3}\big|k^{ji}_sf_j\cdot f_j\big|\dd v\leq \f{C_m}{N}|\nu^{1/2}_jf_j|_{L^2_{v}}^2.
$$
The proof of \eqref{2.1.4} now follows from the same arguments as in \cite[Lemma 3]{G1}. We omit it for brevity. 
\end{proof}

\section{Existence for shock profile}\label{sec3}
This section is devoted to construct the shock profile with small amplitude and prove our main result Theorem \ref{thm1.1}. Recall the strength of the shock $\vep=c_0-s\ll1$. Here, $c_0$ and $s$ are respectively the sound speed and shock speed defined as before. Motivated by \cite{CN}, we introduce a scaled variable $y=\vep x$ and denote $\mathbf{\tilde{f}}(y,v)=\vep^{-1}\mathbf{f}(x,v)$. Therefore, from \eqref{1.2.8} and \eqref{1.2.9}, the equation of $\mathbf{\tilde{f}}=\mathbf{\tilde{f}}(y)$ reads as
\begin{align*}
(v_1-s)\pa_y\mathbf{\tilde{f}}+\vep^{-1}\mathbf{L\tilde{f}}=\mathbf{\Gamma}(\mathbf{\tilde{f}}),
\end{align*}
with
\begin{align*}
\lim_{y\rightarrow-\infty}\mathbf{\tilde{f}}(y,v)=[0,0]^T,\quad \lim_{y\rightarrow +\infty}\mathbf{\tilde{f}}(y,v)=\vep^{-1}[\f{M_{A,+}-M_{A,-}}{\sqrt{M_{A,-}}},\f{M_{B,+}
-M_{B,-}}{\sqrt{M_{B,-}}}]^T:=\mathbf{\tilde{f}_{\infty}}.
\end{align*}
In what follows, we drop the tildes again for simplicity of notation and rewrite that
\begin{align}\label{3.0.1}
(v_1-s)\pa_y\mathbf{f}+\vep^{-1}\mathbf{Lf}=\mathbf{\Gamma}(\mathbf{f}),
\end{align}
with
\begin{align}\label{3.0.2}
\lim_{y\rightarrow-\infty}\mathbf{f}(y,v)=[0,0]^T,\quad \lim_{y\rightarrow +\infty}\mathbf{f}(y,v)=\mathbf{f_{\infty}}.
\end{align}
\subsection{Bifurcation equations and solution ansatz}
Following the idea in \cite{CN}, we start with the following approximated generalized eigenvalue problem associated to the profile equation \eqref{3.0.1}:
\begin{align}\label{3.1.1}
\mathbf{L}\phi_{\vep}+\vep\lambda(v_1-s)\phi_{\vep}=\vep^2\mu_{\vep},
\end{align}
with $\vep>0$, $\lambda\in \mathbb{R},$ and $\mu_{\vep}$ is bounded uniformly in $\vep$. The following result gives the existence of the solution $\phi_{\vep}$ to \eqref{3.1.1}. It can be proved along the same lines in \cite[Proposition 3.1]{CN}
with a minor modification. For completeness, we put the proof in the Appendix.

\begin{proposition}\label{prop2.1}
Let $0\leq q<\min\{m_A,m_B\}$ and $\vep>0$ be sufficiently small. Then the generalized eigenvalue problem \eqref{3.1.1} admits a solution pair $(\lambda,\phi_{\vep})$, which satisfies
\begin{align}\label{3.1.2}
\langle(v_1-s)\phi_{\vep},\phi_{\vep}\rangle=-\vep,\quad \langle(v_1-s)\chi_i,\phi_{\vep}\rangle=0,\quad i=-1,\cdots, 4,
\end{align}
\begin{align}\label{3.1.3}
\langle\chi_i,\mu_{\vep}\rangle=0,\quad i=-1,\cdots,4,
\end{align}
and
\begin{align}\label{3.1.4}
\left|\mu_{\vep}(v)e^{\f{q|v|^2}{4}}\right|\leq C_{q}.
\end{align}
Moreover, $\phi_{\vep}$ has the following expansion: $\phi_{\vep}=\phi_{0}+\vep\phi_1+\vep^2\Xi_{\vep}$. Here
\begin{align}\label{3.1.4-1}
\phi_0=\sqrt{\f3{10\b{\rho}_-}}[\chi_{-1}+\chi_{0}+c_0\chi_1+\f23\chi_4]:=\b{\alpha}\phi_0',
\end{align}
and
\begin{align}\label{3.1.4-2}
\phi_1=\b{\alpha}[\sum_{i=-1}^4\beta_j\chi_j+\phi_1'],
\end{align}
where $\beta_j$ for $j=-1,\cdots 4$ are constants, $\phi_1'\in (\text{Ker}\, \mathbf{L})^{\perp}$ solves 
\begin{align}\label{phi1prime}
\mathbf{L}\phi_1'=-\lambda(v_1-c_0)\phi_0',	
\end{align}
and $\Xi_{\vep}$ is uniformly bounded in the sense that
\begin{equation}\label{boundXi}
	\big|\Xi_{\vep}(v)e^{\f{q|v|^2}{4}}\big|\leq C_q.
\end{equation}
The constant $C_q>0$ is independent of $\vep$. Furthermore, the generalized eigenvalue $\lambda$ is independent of $\vep$ and can be explicitly computed as
\begin{align}\label{3.1.4-4}
	\lambda=\b{\alpha}^{-2}\langle(v_1-c_0)\phi_0',\mathbf{L}^{-1}(v_1-c_0)\phi_0'\rangle^{-1}>0.
\end{align}
\end{proposition}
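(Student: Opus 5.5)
We will follow the Caflisch--Nicolaenko construction, and write $\phi_\vep$ as a perturbation of the sound-speed null vector. At $s=c_0$, the operator $\mathbf{L}$ restricted to $\mathrm{Ker}\,\mathbf{L}$ combined with the constraint $\langle (v_1-c_0)\chi_i,\cdot\rangle$ has the one-dimensional degeneracy spanned by $\phi_0'=\chi_{-1}+\chi_0+c_0\chi_1+\frac23\chi_4$. This is the acoustic eigenvector: one checks directly that $P(v_1-c_0)\phi_0'=0$, using $c_0^2=5\b{\rho}_-/(3\b{m}_-)$ and the Gaussian moment identities for the two species. Then $(v_1-c_0)\phi_0'\in(\mathrm{Ker}\,\mathbf{L})^\perp$, so $\phi_1'=-\lambda\mathbf{L}^{-1}(v_1-c_0)\phi_0'$ is well defined. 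This uses the coercivity of Lemma \ref{spectralgap} on the orthogonal complement, together with the decomposition $\mathbf{L}=\nu-K$ of Lemma \ref{leL} and Lemmas \ref{leK}, \ref{lmK}, which give the Gaussian-weighted $L^\infty$ bound $|\phi_1'e^{q|v|^2/4}|\le C_q$. For soft potentials we upgrade $L^2_\nu$ to weighted $L^\infty$ by writing $\nu f=Kf+g$ and iterating with \eqref{2.1.10}--\eqref{2.1.11}. The multiplication by $(v_1-c_0)$ only costs polynomial weight, and this is absorbed because the sources are Gaussian.

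Next we substitute the ansatz $\phi_\vep=\phi_0+\vep\phi_1+\vep^2\Xi_\vep$ into \eqref{3.1.1}, using $s=c_0-\vep$:
\begin{align*}
\mathbf{L}\phi_\vep+\vep\lambda(v_1-s)\phi_\vep=\vep\big[\mathbf{L}\phi_1+\lambda(v_1-c_0)\phi_0\big]+\vep^2\big[\mathbf{L}\Xi_\vep+\lambda\phi_0+\lambda(v_1-c_0)\phi_1\big]+O(\vep^3).
\end{align*}
The $O(\vep)$ bracket vanishes by \eqref{phi1prime}. The $O(\vep^2)$ bracket defines $\mu_\vep$. We choose it as $(I-P)$ of the $v$-dependent remainder, so that \eqref{3.1.3} holds, and we set $\mathbf{L}\Xi_\vep=-(I-P)[\cdots]$. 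This equation is solvable in $(\mathrm{Ker}\,\mathbf{L})^\perp$, up to a kernel component that is left free.

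The free kernel coefficients, namely the $\beta_j$ in \eqref{3.1.4-2} and the kernel part of $\Xi_\vep$, are then fixed by the normalization \eqref{3.1.2}. The six orthogonality conditions $\langle(v_1-s)\chi_i,\phi_\vep\rangle=0$ form a linear system on the kernel components. Its matrix is $\langle(v_1-c_0)\chi_i,\chi_j\rangle$ plus $O(\vep)$, and it is invertible on the complement of $\phi_0'$ because $c_0$ is a simple characteristic speed. The remaining scalar condition $\langle(v_1-s)\phi_\vep,\phi_\vep\rangle=-\vep$ has zero $O(1)$ term, since $\langle(v_1-c_0)\phi_0,\phi_0\rangle=0$. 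At $O(\vep)$ it reads
\[
-\langle\phi_0,\phi_0\rangle+2\langle(v_1-c_0)\phi_0,\phi_1\rangle=-1 .
\]
Since $\langle(v_1-c_0)\phi_0,\phi_1'\rangle=-\lambda\langle(v_1-c_0)\phi_0',\mathbf{L}^{-1}(v_1-c_0)\phi_0'\rangle$ and $\b{\alpha}$ is chosen with $\b{\alpha}^2|\phi_0'|^2=1$, this is the equation that determines $\lambda$ as in \eqref{3.1.4-4}, independent of $\vep$. Positivity of $\lambda$ follows from the positivity of $\mathbf{L}^{-1}$ on the orthogonal complement. The higher-order parts of the normalization are absorbed by the kernel components of $\Xi_\vep$ through an implicit function argument in $\vep$.

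The main obstacle is verifying that the bounds \eqref{3.1.4} and \eqref{boundXi} hold uniformly in $\vep$ with Gaussian weight $e^{q|v|^2/4}$ when $\gamma<0$. In that regime $\mathbf{L}^{-1}$ is unbounded in plain $L^2$, and each application of $(v_1-c_0)$ adds a factor $|v|$. We plan to handle this as follows. Every right-hand side appearing above is a finite combination of Gaussians times polynomials, or of $\mathbf{L}^{-1}$ applied to such functions. We apply $\mathbf{L}^{-1}$ at most twice, and each time use $\nu^{-1}\lesssim(1+|v|)^{|\gamma|}$ together with the decay of $K$ from \eqref{2.1.10}. This loses only polynomial weight, which the gap between $e^{q|v|^2/2}$-type decay of the data and $e^{q|v|^2/4}$ absorbs for $q<\min\{m_A,m_B\}$.
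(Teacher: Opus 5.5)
There is a genuine gap in how you dispose of the free coefficients at second order.

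\textbf{Where your scheme fails.} Write the kernel part of $\phi_1$ as $\b{\alpha}(\beta'\phi_0'+\Theta)$ with $\Theta=\sum_{j=-1}^3\beta_j\chi_j$. After $\lambda$, $\b{\alpha}$ and $\Theta$ are fixed at $O(\vep)$, your only remaining unknowns are $\beta'$ and the kernel part of $\Xi_\vep$. The rest of $\Xi_\vep$ is slaved to $\phi_1$ through $\mathbf{L}\Xi_\vep=-\lambda(I-P)(v_1-c_0)\phi_1$.

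Contract the orthogonality conditions in \eqref{3.1.2} with $\phi_0$, which is exactly the direction in which $\langle(v_1-c_0)\chi_i,\chi_j\rangle$ is singular. Using $|\phi_0|=1$ and $\langle(v_1-c_0)\phi_0,\phi_1\rangle=-1$, this becomes
\[
\langle\phi_0,\phi_1\rangle+\langle(v_1-c_0)\phi_0,\Xi_\vep\rangle+\vep\langle\phi_0,\Xi_\vep\rangle=0 .
\]
Look at how your free parameters enter this identity:
\begin{itemize}
\item Kernel components of $\Xi_\vep$ appear only in the last term, because $\langle(v_1-c_0)\phi_0,\chi_j\rangle=0$.
\item $\beta'$ cancels. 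It contributes $+\beta'$ through $\langle\phi_0,\phi_1\rangle$. It contributes $-\beta'$ through the piece $\b{\alpha}\beta'\phi_1'$ of $\Xi_\vep$, since $\b{\alpha}^2\langle(v_1-c_0)\phi_0',\phi_1'\rangle=-1$.
\end{itemize}
What remains at order one is a fixed number,
\[
\b{\alpha}^2\big(\langle(v_1-c_0)\phi_1',\phi_1'\rangle-\langle(v_1-c_0)\Theta,\Theta\rangle\big),
\]
obtained after using the $O(\vep)$ orthogonality relations contracted with $\Theta$. There is no reason for it to vanish. So the $\phi_0'$-component of $\Xi_\vep$ is forced to be of size $\vep^{-1}$, which contradicts \eqref{boundXi}.

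The same degeneracy breaks your implicit-function step for the normalization. Since $\langle(v_1-s)\phi_0,\chi_j\rangle=\vep\langle\phi_0,\chi_j\rangle$, kernel components of $\Xi_\vep$ affect $\langle(v_1-s)\phi_\vep,\phi_\vep\rangle$ only at $O(\vep^3)$. The residual you need to remove is $O(\vep^2)$, so the linearization is degenerate at $\vep=0$.

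\textbf{The missing idea.} You must not annihilate the $(I-P)$-part of the $O(\vep^2)$ remainder. Condition \eqref{3.1.3} is not something you impose by choosing $\mu_\vep$. Since $\langle\chi_i,\mu_\vep\rangle=\vep^{-1}\lambda\langle(v_1-s)\chi_i,\phi_\vep\rangle$, it is equivalent to the orthogonality conditions. Meanwhile $\mu_\vep$ is allowed an $O(1)$ component in $(\mathrm{Ker}\,\mathbf{L})^\perp$.

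The paper simply defines $\mu_\vep:=\mathbf{L}\Xi_\vep+\lambda\phi_0+\lambda(v_1-s)(\phi_1+\vep\Xi_\vep)$ and uses the explicit ansatz $\Xi_\vep=\sum_{j=-1}^3\kappa_j\chi_j+\kappa'\chi_5$. In this ansatz:
\begin{itemize}
\item $\chi_4$ is dropped, and the $\phi_0'$-direction is carried by $\beta'$ at order $\vep$ instead.
\item A non-kernel function $\chi_5$ is added, with $\langle(v_1-c_0)\phi_0,\chi_5\rangle\neq0$.
\item $(\beta',\kappa_{-1},\dots,\kappa_3,\kappa')$ then solve a $7\times7$ system whose determinant $\det\mathcal{M}$ stays bounded away from zero uniformly in $\vep$.
\end{itemize}
This also makes $\Xi_\vep$ a Gaussian times a polynomial, so your weighted $\mathbf{L}^{-1}$ bookkeeping is needed only for $\phi_1'$.

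\textbf{A sign slip.} Since $v_1-s=v_1-c_0+\vep$, the $O(\vep)$ normalization reads
\[
\langle\phi_0,\phi_0\rangle+2\langle(v_1-c_0)\phi_0,\phi_1\rangle=-1 .
\]
Combine this with the $\phi_0$-component of the $O(\vep)$ orthogonality conditions, $\langle\phi_0,\phi_0\rangle+\langle(v_1-c_0)\phi_0,\phi_1\rangle=0$. Together they force $\langle\phi_0,\phi_0\rangle=1$, so $\b{\alpha}$ is not a free choice, and they give \eqref{3.1.4-4}. With your sign you would obtain $\lambda=0$.
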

For convenience, we denote 
\begin{equation}\label{Defvarphi}
\varphi_{\vep}:=\phi_1+\vep\Xi_{\vep},
\end{equation}
 in the sequel. One can rewrite the expansion of $\phi_\eps$ as
 \begin{equation}\label{expanphi}
 	\phi_{\vep}=\phi_{0}+\vep\phi_1+\vep^2\Xi_{\vep}=\phi_{0}+\vep\varphi_\varepsilon.
 \end{equation}
  Next, we define the following projection operator
\begin{align}\label{3.1.5}
\pi(\mathbf{f}):=\f{\langle(v_1-s)\phi_{\vep},\mathbf{f}\rangle}{\langle(v_1-s)\phi_{\vep},
\phi_{\vep}\rangle}\phi_{\vep}=-\vep^{-1}\langle(v_1-s)\phi_{\vep},\mathbf{f}\rangle\phi_{\vep},
\end{align}
associated to the approximated generalized eigenfunction $\phi_{\vep}$, and its adjoint operator
\begin{align}\label{3.1.6}
\pi_*(\mathbf{f}):=-(v_1-s)\phi_{\vep}\langle\varphi_{\vep},\mathbf{f}\rangle.
\end{align}
It is direct to check that $\pi^2=\pi$ and $\pi_*^2=\pi_*$. Moreover, as in \cite{CN}, we have the following properties of the operators above.
\begin{lemma}\label{lm2.2}
The projection operators $\pi$ and $\pi_*$ satisfy the following properties:\\

1. If $\langle(v_1-s)\chi_i, \mathbf{f}\rangle=0,$ for $i=-1,\cdots,4$, then it holds that
\begin{align}\label{3.1.7}
\pi_*(v_1-s)\mathbf{f}=(v_1-s)\pi \mathbf{f}.
\end{align}

2. If $\mathbf{g}\in \text{span}\{\chi_i\}_{i=-1}^4$, then it holds that
\begin{align}\notag
\langle\pi \mathbf{f},\mathbf{g}\rangle=\langle \mathbf{f},\pi_* \mathbf{g}\rangle.
\end{align}

3. For the projection $P$ defined in \eqref{2.1.5}, one has
\begin{align}\label{pi*L}
\pi_*\mathbf{Lf}=-\vep\lambda(v_1-s)\pi \mathbf{f}-\vep(v_1-s)\phi_{\vep}\langle\mu_{\vep},(I-P)\mathbf{f}\rangle.
\end{align}

4. It holds that $(I-\pi_*)\mathbf{Lf}=\mathbf{L}_1\mathbf{f}-\vep(I-\pi_*)\mu_{\vep}\langle(v_1-s)\phi_{\vep},\mathbf{f}\rangle$, where
\begin{align}\label{DefL1}
\mathbf{L}_1=(I-\pi_*)\mathbf{L}(I-\pi).
\end{align}

5. The operator $\mathbf{L}_1$ in \eqref{DefL1} is self-adjoint, non-negative, and
\begin{align}\label{3.1.8-1}
\text{Ker}\,\mathbf{L}_1=\text{span}\{\chi_{-2}, \chi_{-1},\cdots, \chi_4\},
 \end{align}
where we have denoted that $\chi_{-2}:=\varphi_{\vep}$ and $\chi_{-1},\cdots,\chi_4$ are defined in \eqref{2.1.2}.
\end{lemma}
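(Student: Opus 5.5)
All five items are algebraic consequences of three facts. First, the defining relation \eqref{3.1.1}. Second, the normalizations \eqref{3.1.2}--\eqref{3.1.3}. Third, the splitting \eqref{expanphi}, $\phi_\vep=\phi_0+\vep\varphi_\vep$ with $\phi_0\in\mathrm{Ker}\,\mathbf{L}$. I would first record three identities to be used throughout.

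\begin{itemize}
\item[(a)] Since $\mathbf{L}\phi_0=0$, we have $\mathbf{L}\phi_\vep=\vep\mathbf{L}\varphi_\vep$. Then \eqref{3.1.1} gives
$$\mathbf{L}\varphi_\vep=-\lambda(v_1-s)\phi_\vep+\vep\mu_\vep.$$
\item[(b)] Because $\phi_0$ is a combination of the $\chi_i$, \eqref{3.1.2} yields $\langle(v_1-s)\phi_0,\phi_\vep\rangle=0$. Hence
$$\langle\varphi_\vep,(v_1-s)\phi_\vep\rangle=\vep^{-1}\langle(v_1-s)\phi_\vep,\phi_\vep\rangle=-1,$$
so $\pi_*\big((v_1-s)\phi_\vep\big)=(v_1-s)\phi_\vep$.
\item[(c)] By \eqref{3.1.3}, $\langle\mu_\vep,\mathbf{f}\rangle=\langle\mu_\vep,(I-P)\mathbf{f}\rangle$.
\end{itemize}

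For item 1, I write
$$\pi_*(v_1-s)\mathbf{f}=-(v_1-s)\phi_\vep\langle\varphi_\vep,(v_1-s)\mathbf{f}\rangle.$$
By the hypothesis, $\langle(v_1-s)\phi_0,\mathbf{f}\rangle=0$, so $\vep\langle(v_1-s)\varphi_\vep,\mathbf{f}\rangle=\langle(v_1-s)\phi_\vep,\mathbf{f}\rangle$. This is exactly $(v_1-s)\pi\mathbf{f}$.

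For item 3, self-adjointness of $\mathbf{L}$ and (a) give
$$\pi_*\mathbf{Lf}=-(v_1-s)\phi_\vep\langle\mathbf{L}\varphi_\vep,\mathbf{f}\rangle=(v_1-s)\phi_\vep\big(\lambda\langle(v_1-s)\phi_\vep,\mathbf{f}\rangle-\vep\langle\mu_\vep,\mathbf{f}\rangle\big).$$
Rewriting the first term through \eqref{3.1.5} and using (c) gives \eqref{pi*L}.

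For item 4, I split $\mathbf{Lf}=\mathbf{L}(I-\pi)\mathbf{f}+\mathbf{L}\pi\mathbf{f}$. By \eqref{3.1.1},
$$\mathbf{L}\pi\mathbf{f}=-\vep^{-1}\langle(v_1-s)\phi_\vep,\mathbf{f}\rangle\big(-\vep\lambda(v_1-s)\phi_\vep+\vep^2\mu_\vep\big).$$
By (b), $I-\pi_*$ annihilates the $(v_1-s)\phi_\vep$ component, and only $-\vep(I-\pi_*)\mu_\vep\langle(v_1-s)\phi_\vep,\mathbf{f}\rangle$ survives.

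Item 2 is a one-line comparison of
$$\langle\pi\mathbf{f},\mathbf{g}\rangle=-\vep^{-1}\langle(v_1-s)\phi_\vep,\mathbf{f}\rangle\langle\phi_\vep,\mathbf{g}\rangle\quad\text{with}\quad\langle\mathbf{f},\pi_*\mathbf{g}\rangle=-\langle(v_1-s)\phi_\vep,\mathbf{f}\rangle\langle\varphi_\vep,\mathbf{g}\rangle.$$
It reduces to $\langle\phi_0,\mathbf{g}\rangle=0$ for $\mathbf{g}\in\mathrm{span}\{\chi_i\}$. This is where I expect the main obstacle. Taken literally with $\mathbf{g}=\phi_0$, the identity seems to fail, since $\langle\phi_0,\phi_0\rangle=\b{\alpha}^2|\phi_0'|_{L^2_v}^2\neq0$. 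I would first check the precise convention in \cite{CN}. The likely reading is that $\mathbf{g}$ ranges over $\mathrm{span}\{\chi_i\}$ intersected with $\phi_0^{\perp}$, or that the pairing is meant in the sense used in item 1. Whatever the precise hypothesis, the computation above is the whole proof once it is fixed.

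For item 5, I expand $\mathbf{L}_1=\mathbf{L}-\pi_*\mathbf{L}-\mathbf{L}\pi+\pi_*\mathbf{L}\pi$. The formulas from items 3 and 4 give
$$\mathbf{L}\pi=\lambda\,(v_1-s)\phi_\vep\otimes(v_1-s)\phi_\vep-\vep\,\mu_\vep\otimes(v_1-s)\phi_\vep,$$
$$\pi_*\mathbf{L}=\lambda\,(v_1-s)\phi_\vep\otimes(v_1-s)\phi_\vep-\vep\,(v_1-s)\phi_\vep\otimes\mu_\vep.$$
So $\pi_*\mathbf{L}=(\mathbf{L}\pi)^*$. Also $\pi_*\mathbf{L}\pi$ is a scalar multiple of $(v_1-s)\phi_\vep\otimes(v_1-s)\phi_\vep$, which is symmetric. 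Hence $\mathbf{L}_1$ is self-adjoint.

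For non-negativity, set $h=(I-\pi)\mathbf{f}$ and write $\langle\mathbf{L}_1\mathbf{f},\mathbf{f}\rangle=\langle\mathbf{L}_1\mathbf{f},h\rangle+\langle\mathbf{L}_1\mathbf{f},\pi\mathbf{f}\rangle$. By (b) and $\mathbf{L}\phi_\vep=\vep\mathbf{L}\varphi_\vep$,
$$\langle\pi_*\mathbf{L}h,\phi_\vep\rangle=\vep\langle\varphi_\vep,\mathbf{L}h\rangle=\langle\mathbf{L}\phi_\vep,h\rangle,$$
so $\langle(I-\pi_*)\mathbf{L}h,\phi_\vep\rangle=0$ and the cross term vanishes. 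The same symmetric computation removes $\pi_*$ against $h$. This gives $\langle\mathbf{L}_1\mathbf{f},\mathbf{f}\rangle=\langle\mathbf{L}h,h\rangle\ge0$, and I would verify this last step carefully.

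Finally, $\mathbf{L}_1\mathbf{f}=0$ forces $\langle\mathbf{L}h,h\rangle=0$, hence $h\in\mathrm{Ker}\,\mathbf{L}$ by Lemma \ref{spectralgap}. Conversely, $h\in\mathrm{Ker}\,\mathbf{L}$ gives $\mathbf{L}_1\mathbf{f}=0$. Therefore
$$\mathrm{Ker}\,\mathbf{L}_1=\mathrm{Ker}\,\mathbf{L}\oplus\mathrm{span}\{\phi_\vep\}=\mathrm{span}\{\varphi_\vep,\chi_{-1},\dots,\chi_4\},$$
using $\phi_\vep=\phi_0+\vep\varphi_\vep$. The sum is direct because $\varphi_\vep\notin\mathrm{Ker}\,\mathbf{L}$. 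Indeed, by \eqref{phi1prime}, $\phi_1'\neq0$ lies in $(\mathrm{Ker}\,\mathbf{L})^\perp$, and the $O(\vep)$ correction $\vep\Xi_\vep$ cannot cancel it for small $\vep$. This proves \eqref{3.1.8-1}.
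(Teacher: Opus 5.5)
The paper does not actually prove this lemma; it only asserts the properties ``as in \cite{CN}''. Your direct verification is correct and self-contained. Identities (a)--(c) hold, and items 1, 3 and 4 follow exactly as you compute them.

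Your suspicion about item 2 is also correct, and it reflects a misstatement in the lemma rather than a gap in your argument. Write $\pi^{\dagger}\mathbf g:=-\vep^{-1}\langle\phi_\vep,\mathbf g\rangle(v_1-s)\phi_\vep$ for the genuine $L^2_v$-adjoint of $\pi$. Then $\pi_*\mathbf g-\pi^{\dagger}\mathbf g=\vep^{-1}\langle\phi_0,\mathbf g\rangle(v_1-s)\phi_\vep$, so the identity holds for all $\mathbf f$ if and only if $\langle\phi_0,\mathbf g\rangle=0$. With $\mathbf f=\phi_\vep$ and $\mathbf g=\phi_0$, the two sides differ by $\langle\phi_0,\phi_0\rangle=1$ (see \eqref{Pro2}). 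The right hypothesis is $\mathbf g\perp\mathrm{span}\{\chi_i\}_{i=-1}^4$, i.e.\ $\mathbf g\in(\mathrm{Ker}\,\mathbf L)^\perp$, rather than $\mathbf g\in\mathrm{span}\{\chi_i\}\cap\phi_0^\perp$. This is the form effectively used later, when $\langle(I-\pi_*)G,\phi_\vep\rangle=0$ is computed for collision terms $G$ in showing that solutions of \eqref{3.1.17} solve \eqref{3.1.15}.

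In item 5, the step you flagged is true, but not by ``the same symmetric computation''. The term $\langle\pi_*\mathbf Lh,h\rangle=-\langle(v_1-s)\phi_\vep,h\rangle\langle\varphi_\vep,\mathbf Lh\rangle$ vanishes because $\pi h=0$ means exactly $\langle(v_1-s)\phi_\vep,h\rangle=0$.

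The corrected item 2 also gives a shorter route that replaces your rank-one bookkeeping. Since $\mathbf L$ maps into $(\mathrm{Ker}\,\mathbf L)^\perp$, where $\pi_*=\pi^{\dagger}$, we get $\mathbf L_1=(I-\pi)^{\dagger}\mathbf L(I-\pi)$. This is manifestly self-adjoint, with $\langle\mathbf L_1\mathbf f,\mathbf f\rangle=\langle\mathbf L(I-\pi)\mathbf f,(I-\pi)\mathbf f\rangle\ge0$.

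For the kernel, the inclusion $\mathrm{Ker}\,\mathbf L\subset\mathrm{Ker}\,\mathbf L_1$ uses $\pi\chi_i=0$, which comes from \eqref{3.1.2}; your argument uses this implicitly. With that, your characterization $\mathbf L_1\mathbf f=0\iff(I-\pi)\mathbf f\in\mathrm{Ker}\,\mathbf L$ gives \eqref{3.1.8-1}.
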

We shall use the eigenprojection $\pi_*$ in \eqref{3.1.6} to decompose the solution into the fluid-like part and kinetic part. Before that, we need to clarify the asymptotic values of these two parts at $x=\pm\infty$. Recalling the downstream $\mathbf{f}_{\infty}$ defined in \eqref{3.0.2}, we have the following result.

\begin{lemma}\label{lm2.3}
If $0<\vep\ll1$, there exists a smooth function $\mathbf{g}_{\infty}$ with
\begin{align}\label{3.1.8-2}
\big|\mathbf{g}_{\infty}(v)e^{\f{q|v|^2}{4}}\big|\leq C_q,
\end{align}
for $0\leq q<\min\{m_A,m_B\}$, such that
\begin{equation}\label{finfty}
\mathbf{f}_{\infty}=-\f{3}{2c_0\b{\alpha}}\phi_{\vep}+\vep \mathbf{g}_{\infty}.
\end{equation}
 Moreover, we have the following decomposition of $\mathbf{f}_{\infty}$ associated to the projection $\pi$ that
\begin{align}\label{3.1.8-3}
\mathbf{f}_{\infty}=z_{\infty}\phi_{\vep}+\vep(I-\pi)\mathbf{g}_{\infty},\ \text{with}\  z_{\infty}=-\f{3}{2c_0\b{\alpha}}-\vep\langle(v_1-s)\varphi_{\vep},\mathbf{g}_{\infty}\rangle.
\end{align}

\end{lemma}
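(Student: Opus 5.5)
We expand $\mathbf{f}_\infty$ explicitly in $\vep$ along the Hugoniot curve. Fix the upstream state ($u_-=0$, $\theta_-=1$). The downstream parameters $\rho_{i,+},u_+,\theta_+$ are smooth functions of the shock parameter $\eta$. Since $s=\sqrt{5\b{\rho}_-/(3\b m_-+4\b m_-\eta)}$ and $\vep=c_0-s$, we have $\eta=\frac{3c_0^2-3s^2}{4s^2}\cdot$const, which is a smooth function of $\vep$ with $\eta=\frac{3}{2c_0}\vep+O(\vep^2)$ (up to the exact constant, read off from $ds/d\eta|_{\eta=0}=-\frac{2c_0}{3}$). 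Hence $\rho_{i,+}-\rho_{i,-}$, $u_+$ and $\theta_+-1$ are all $O(\vep)$ and smooth in $\vep$.

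Next we Taylor expand in these parameters. Write $M_{i,+}(v)-M_{i,-}(v)=\sum(\text{first derivatives in }(\rho_i,u,\theta))\cdot\delta+R_i$. The first derivatives of a Maxwellian, divided by $\sqrt{M_{i,-}}$, are exactly the kernel elements $\chi_{-1},\dots,\chi_4$. Concretely, $\partial_{\rho_i}$ gives $\chi_{-1}$ or $\chi_0$ with factor $\rho_{i,-}^{-1}$ (after the normalization in \eqref{2.1.2}), $\partial_u$ gives $\chi_1$, and $\partial_\theta$ gives a combination involving $\chi_4$ and the density directions. The remainder is quadratic in $\delta=O(\vep)$, and each of its terms is a polynomial in $v$ times $M_{i,\pm}$ (with intermediate temperature near 1). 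Dividing by $\sqrt{M_{i,-}}$ and multiplying by $e^{q|v|^2/4}$, this stays bounded provided $q<\min\{m_A,m_B\}$ and the intermediate temperature is close enough to $1$, which holds for small $\vep$. Thus $\vep^{-1}[M_{i,+}-M_{i,-}]/\sqrt{M_{i,-}}=\sum_j a_j\chi_j+\vep\,r$, with $r$ satisfying the weighted bound.

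Then we identify the leading term. Using the leading-order ratios from the linearized Hugoniot relations ($\delta\rho_i/\rho_{i,-}=-\eta$, $u_+=-s\eta$, $\theta_+-1\approx\eta-\frac{5s^2}{3c_0^2}\eta\approx -\frac23\eta$), the vector $(a_j)$ should be proportional to the coefficients of $\phi_0'=\chi_{-1}+\chi_0+c_0\chi_1+\frac23\chi_4$ in \eqref{3.1.4-1}. The main computational step is to verify this proportionality and that the constant is $-\frac{3}{2c_0\b\alpha}$ once the factor $\b\alpha$ is inserted. This amounts to checking that the linearized jump is an eigenvector of the acoustic characteristic field at speed $c_0$, which is standard for weak shocks. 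The check must account for the normalization of the $\chi_j$: they are not orthonormal, so one must track how the $\partial_\rho$ and $\partial_\theta$ derivatives decompose into the $\chi_j$.

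Finally, since $\phi_\vep=\phi_0+\vep\varphi_\vep$, we set
\[
\mathbf g_\infty:=r+\tfrac{3}{2c_0\b\alpha}\varphi_\vep .
\]
This is bounded by \eqref{3.1.8-2}, using the bounds on $\phi_1$ (a kernel combination plus $\phi_1'=\mathbf L^{-1}$ of a Gaussian-decaying function) and on $\Xi_\vep$ from \eqref{boundXi}. This gives \eqref{finfty}. For \eqref{3.1.8-3}, we apply $\pi$ and use $\pi\phi_\vep=\phi_\vep$:
\[
\mathbf f_\infty=-\tfrac{3}{2c_0\b\alpha}\phi_\vep+\vep\pi\mathbf g_\infty+\vep(I-\pi)\mathbf g_\infty .
\]
By \eqref{3.1.5}, $\vep\pi\mathbf g_\infty=-\langle(v_1-s)\phi_\vep,\mathbf g_\infty\rangle\phi_\vep$. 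Expanding $\phi_\vep=\phi_0+\vep\varphi_\vep$, the $\phi_0$-contribution $\langle(v_1-s)\phi_0,\mathbf g_\infty\rangle$ must be shown to vanish (or be absorbed). I expect this to be the genuine obstacle. Since $(v_1-s)\phi_0$ need not be orthogonal to $\mathbf g_\infty$, I would use the Rankine--Hugoniot conditions themselves. Because $\mathbf f_\infty$ encodes an exact RH jump, $\langle (v_1-s)\chi_i,\mathbf f_\infty\rangle=0$ for $i=-1,\dots,4$ (these are exactly \eqref{1.2.4}). Combined with \eqref{3.1.2}, this gives $\langle(v_1-s)\phi_0,\mathbf f_\infty\rangle=O(\vep)\cdot$ suitable terms. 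One then solves for the $\phi_\vep$-coefficient directly as $z_\infty=-\vep^{-1}\langle(v_1-s)\phi_\vep,\mathbf f_\infty\rangle$ and expands, obtaining the stated formula for $z_\infty$.
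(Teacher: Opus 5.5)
Your plan is essentially the paper's proof. You Taylor-expand $M_{i,+}$ about $M_{i,-}$, and your Hugoniot ratios give the linear part as $-\eta(\chi_{-1}+\chi_0+s\chi_1+\frac23\chi_4)$ with $\eta=\frac{3\varepsilon}{2c_0}+O(\varepsilon^2)$, i.e.\ $-\frac{3}{2c_0\bar\alpha}\phi_0$ up to $O(\varepsilon^2)$. You then absorb $\phi_0-\phi_\varepsilon=-\varepsilon\varphi_\varepsilon$ into $\mathbf g_\infty$, and kill the $\phi_0$-part of $\pi\mathbf g_\infty$ using the conservation-law orthogonality of $\mathbf f_\infty$ together with \eqref{3.1.2}. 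The only procedural difference is minor: the paper gets $\langle(v_1-s)\chi_i,\mathbf f_\infty\rangle=0$ by integrating $\partial_y\langle(v_1-s)\chi_i,\mathbf f\rangle=0$ over the profile, whereas you read it off from \eqref{1.2.4}. That is the same fact, without presupposing a solution. Your $\mathbf g_\infty=r+\frac{3}{2c_0\bar\alpha}\varphi_\varepsilon$ also has the correct sign; the paper's minus sign there is a slip.

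The one thing to correct is the final step, which you call the genuine obstacle and then resolve only as ``$O(\varepsilon)\cdot$ suitable terms''. That would not suffice. The factor $\varepsilon^{-1}$ in \eqref{3.1.5} turns an $O(\varepsilon)$ error into an $O(1)$ change in the leading coefficient of $z_\infty$, so the stated formula would fail.

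In fact the vanishing is exact and there is no obstacle. Since $\phi_0\in\mathrm{span}\{\chi_i\}$, the RH identities alone give $\langle(v_1-s)\phi_0,\mathbf f_\infty\rangle=0$, without \eqref{3.1.2}. What \eqref{3.1.2} supplies is $\langle(v_1-s)\phi_0,\phi_\varepsilon\rangle=0$. Because $\varepsilon\mathbf g_\infty=\mathbf f_\infty+\frac{3}{2c_0\bar\alpha}\phi_\varepsilon$, these two facts give $\langle(v_1-s)\phi_0,\mathbf g_\infty\rangle=0$. So $(v_1-s)\phi_0$ \emph{is} orthogonal to $\mathbf g_\infty$, contrary to your expectation. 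Hence $\varepsilon\pi\mathbf g_\infty=-\varepsilon\langle(v_1-s)\varphi_\varepsilon,\mathbf g_\infty\rangle\phi_\varepsilon$, and \eqref{3.1.8-3} follows at once.

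If you instead expand $z_\infty=-\varepsilon^{-1}\langle(v_1-s)\phi_\varepsilon,\mathbf f_\infty\rangle$, you also need $\langle(v_1-s)\varphi_\varepsilon,\phi_\varepsilon\rangle=\varepsilon^{-1}\big(\langle(v_1-s)\phi_\varepsilon,\phi_\varepsilon\rangle-\langle(v_1-s)\phi_0,\phi_\varepsilon\rangle\big)=-1$. This uses both conditions in \eqref{3.1.2}.
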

\begin{proof}
Expanding $M_{i,+}$ for $i=A,B$, we have
\begin{align}
M_{i,+}=M_{i,-}+\left\{\f{\rho_{i,+}-\rho_{i,-}}{\rho_{i,-}}
+m_{i}v_1u_++(\f{m_{i}|v|^2}{2}-\f32)(\theta_+-1)\right\}M_{i,-}+\vep^2 g_{i,\vep},\nonumber
\end{align}
for some smooth $g_{i,\vep}=g_{i,\vep}(v)$ with 
\begin{equation}\label{boundgeps}
	\big|g_{i,\vep}(v)M_{i,-}^{1-\eta}\big|\leq C_q,
\end{equation}
for any $0<\eta<1.$
From the far-field conditions \eqref{3.0.2}, we have a smooth function 
\begin{equation}\label{tildeginfty}
	\mathbf{\tilde{g}}_{\infty}=[g_{A,\vep}M_{A,-}^{-1/2},g_{B,\vep}M_{B,-}^{-1/2}]^T,
\end{equation}
such that
\begin{align}\label{3.1.8-4}
\mathbf{f}_{\infty}=-\f{3}{2c_0}(\chi_{-1}+\chi_0+c_0\chi_1+\f23\chi_4)+\vep \mathbf{{\tilde{g}}_{\infty}}=-\f3{2c_0\b{\alpha}}\phi_0+\vep \mathbf{\tilde{g}_{\infty}}.
\end{align}
Then we let $\mathbf{g}_{\infty}:=\mathbf{\tilde{g}_{\infty}}+\vep^{-1}\f3{2c_0\b{\alpha}}(\phi_0-\phi_{\vep})=\mathbf{\tilde{g}_{\infty}}-\f3{2c_0\b{\alpha}}\varphi_{\vep}$ to obtain \eqref{finfty}. Thus, the upper bound in \eqref{3.1.8-2} follows from \eqref{boundgeps}, \eqref{tildeginfty}, \eqref{Defvarphi}, \eqref{3.1.4-2} and \eqref{boundXi}. To further show \eqref{3.1.8-3}, we multiply $\chi_i$ on \eqref{3.0.1}, take integration on $v$ and use the orthogonal conditions \eqref{1.2.2.2} to obtain that
$$\f{\dd}{\dd y}\langle(v_1-s)\chi_i,\mathbf{f}\rangle=0,\quad i=-1,\cdots,4.
$$
Therefore, after integrating on $y$ from $-\infty$ to $+\infty$ and taking summation on $i$, it holds that
\begin{align}\label{orthofinfty}
\langle(v_1-s)\phi_0,\mathbf{f}_{\infty}\rangle=0.
\end{align}
Using \eqref{3.1.5}, \eqref{expanphi} and \eqref{finfty}, we have
\begin{align*}
\pi \mathbf{g}_{\infty}&=-\vep^{-1}\phi_{\vep}\langle(v_1-s)\phi_{0},\mathbf{g}_{\infty}\rangle-
\phi_{\vep}\langle(v_1-s)\varphi_{\vep},\mathbf{g}_{\infty}\rangle\notag\\
&=\vep^{-2}\phi_{\vep}\langle(v_1-s)\phi_{0},(\mathbf{f}_{\infty}+\f{3}{2c_0\b{\alpha}}\phi_{\vep})\rangle-
\phi_{\vep}\langle(v_1-s)\varphi_{\vep},\mathbf{g}_{\infty}\rangle,
\end{align*}
which, together with \eqref{orthofinfty}, \eqref{3.1.4-1} and \eqref{3.1.2}, yields
\begin{align}\label{piginfty}
	\pi \mathbf{g}_{\infty}&=-
	\phi_{\vep}\langle(v_1-s)\varphi_{\vep},\mathbf{g}_{\infty}\rangle.
\end{align}
Substitute the above equality into \eqref{3.1.8-4}, one gets \eqref{3.1.8-3}. Therefore, the proof of Lemma \ref{lm2.3} is complete.
\end{proof}
We will seek for the solution to \eqref{3.0.1} in the form of
\begin{align}\notag
\mathbf{f}(y,v)=z(y)\phi_{\vep}(v)+\vep \psi(y,v),
\end{align}
with orthogonal conditions \begin{align}\label{3.1.10}
\langle(v_1-s)\chi_i,\psi\rangle=0, 
\end{align}
for $i=-2,\cdots,4,$ 
and the following far-field conditions
\begin{equation}\label{Defphiinf}
\left\{\begin{aligned}
	&\lim_{y\rightarrow-\infty}[z(y),\psi(y,v)]=[0,0],\\ 
	&\lim_{y\rightarrow+\infty}[z(y),\psi(y,v)]=[z_{\infty},\psi_{\infty}(v)]=[z_{\infty},(I-\pi)\mathbf{g}_{\infty}(v)].
\end{aligned}\right.
\end{equation}
Now, we derive the equations of $z(y)$ and $\psi(y,v)$. Act the projection $\pi_*$ on \eqref{3.0.1} so as to obtain
\begin{align*}
\pi_*(v_1-s)\pa_y\mathbf{f}+\vep^{-1}\pi_*\mathbf{Lf}=\pi_*\mathbf{\Gamma}(\mathbf{f}),
\end{align*}
which, together with \eqref{3.1.7}, \eqref{pi*L}, yields
\begin{align*}
(v_1-s)\pa_y(\pi\mathbf{f})-\lambda(v_1-s)\pi \mathbf{f}-(v_1-s)\phi_{\vep}\langle\mu_{\vep},(I-P)\mathbf{f}\rangle=\pi_*\mathbf{\Gamma}(\mathbf{f}).
\end{align*}
Using the definition of $\pi$ in \eqref{3.1.5} and the properties \eqref{3.1.2} and \eqref{3.1.10}, it holds that
$$
\pi\mathbf{f}=\pi(z\phi_{\vep}+\vep\psi)=-\vep^{-1}z\langle(v_1-s)\phi_{\vep},\phi_{\vep}\rangle\phi_{\vep}-\langle(v_1-s)\phi_{\vep},\psi\rangle\phi_{\vep}=z\phi_{\vep}.
$$
We combine the above two identities to get
\begin{align}\label{3.1.11}
\pa_yz-\lambda z+\zeta_{\vep} z^2=\vep \tilde{r}_1,
\end{align}
where we have denoted
\begin{align}\label{def.veps}
\zeta_{\vep}=\langle\varphi_{\vep},\mathbf{\Gamma}(\phi_{\vep})\rangle,
\end{align}
and
\begin{align}\label{3.1.11.1}
\tilde{r}_1=\langle\mu_{\vep},(I-P)(z\varphi_{\vep}+\vep\psi)\rangle-\langle\varphi_{\vep},\mathbf{\Gamma}(z\phi_{\vep}+\vep\psi,\psi)
+\mG(\psi,z\phi_{\vep})\rangle.
\end{align}
To further investigate $\zeta_{\vep}$ in \eqref{def.veps}, we will need the following property of $\mG(\mathbf{g})$.
\begin{lemma}
For any $\mathbf{g}=[g_A,g_B]^T \in \text{Ker}\, \mathbf{L}$, we have \begin{align}\label{Gag}
	\mG(\mathbf{g})=\f12\mathbf{L}\left([\f{g_A^2}{\sqrt{M_{A,-}}},\f{g_B^2}{\sqrt{M_{B,-}}}]^T\right).
\end{align}	
\end{lemma}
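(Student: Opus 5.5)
The idea is to differentiate the identity $\mathbf{Q}(\mathbf{M}_\tau)\equiv 0$ twice along a one-parameter family of bi-Maxwellians $\mathbf{M}_\tau$ passing through $\mathbf{M}_-$ with tangent direction $\mathbf{M}_-^{1/2}\mathbf{g}$. Since $\mathbf{Q}$ is quadratic, the second derivative at $\tau=0$ splits into a linearized part, which produces $\mathbf{L}$, and a purely quadratic part, which produces $\mG(\mathbf{g})$. Recall that $u_-=0$ and $\theta_-=1$, so $M_{i,-}$ is a centered Maxwellian with unit temperature.

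\emph{Step 1: representation of $\mathbf{g}$.} By \eqref{2.1.2}, any $\mathbf{g}\in\text{Ker}\,\mathbf{L}$ can be written as $g_i=M_{i,-}^{1/2}\Phi_i$ for $i=A,B$, where
\begin{equation*}
\Phi_i(v)=a_i+m_i\,b\cdot v+c\,\tfrac{m_i|v|^2}{2}.
\end{equation*}
Here $a_A,a_B,c\in\R$ and $b\in\R^3$ are constants, and $b$ and $c$ are common to both species.

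\emph{Step 2: the family of bi-Maxwellians.} For $|\tau|$ small, set $M_{i,\tau}:=M_{i,-}e^{\tau\Phi_i}$. Completing the square in the exponent $-\frac{m_i}{2}(1-\tau c)|v|^2+\tau m_i b\cdot v+\tau a_i$ shows that $M_{i,\tau}$ is a Maxwellian with temperature $(1-\tau c)^{-1}$ and bulk velocity $\tau b/(1-\tau c)$. Both quantities are the same for $i=A,B$, so $\mathbf{M}_\tau=[M_{A,\tau},M_{B,\tau}]^T$ is a bi-Maxwellian of the form \eqref{1.1.4.1}. Hence, for all small $\tau$,
\begin{equation*}
\sum_{j}Q^{ji}(M_{j,\tau},M_{i,\tau})=0 .
\end{equation*}

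\emph{Step 3: differentiate twice at $\tau=0$.} The derivatives are $\partial_\tau M_{i,\tau}|_{0}=M_{i,-}\Phi_i=M_{i,-}^{1/2}g_i$ and $\partial_\tau^2M_{i,\tau}|_{0}=M_{i,-}\Phi_i^2=M_{i,-}^{1/2}h_i$, where $h_i:=g_i^2/\sqrt{M_{i,-}}$. Bilinearity of $Q^{ji}$ gives
\begin{equation*}
0=\sum_j\Big\{Q^{ji}(M_{j,-}^{1/2}h_j,M_{i,-})+Q^{ji}(M_{j,-},M_{i,-}^{1/2}h_i)+2Q^{ji}(M_{j,-}^{1/2}g_j,M_{i,-}^{1/2}g_i)\Big\}.
\end{equation*}
Divide by $M_{i,-}^{1/2}$ and compare with \eqref{1.2.10} and the definition of $\mG$. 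The first two terms together equal $-(\mathbf{L}\mathbf{h})_i$, and the last equals $2\,\mG(\mathbf{g})_i$. This gives $\mG(\mathbf{g})=\frac12\mathbf{L}\mathbf{h}$, which is \eqref{Gag}.

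\emph{Main obstacle: justifying the differentiation.} The only point needing care is differentiating under the collision integrals in Step 3. For $|\tau|\le\tau_0$ with $\tau_0 c$ small, $M_{i,\tau}$ and its first two $\tau$-derivatives are dominated by $C(1+|v|^4)e^{-\frac{m_i}{4}|v|^2}$, uniformly in $\tau$. For $-3<\gamma\le1$ with Grad's cutoff, the kernel $|v-u|^\gamma b^{ji}$ is locally integrable in $u$. Therefore, for each fixed $v$, dominated convergence applies to both the gain and loss terms and justifies the differentiation.
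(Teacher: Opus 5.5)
Your proof is correct and uses the same argument as the paper: differentiate $\mathbf{Q}(\mathbf{M}_\tau)\equiv 0$ twice at $\tau=0$ along a family of bi-Maxwellians with tangent $\mathbf{M}_-^{1/2}\mathbf{g}$, then read off $2\mG(\mathbf{g})-\mathbf{L}\mathbf{h}=0$. Your exponential-tilt family $M_{i,-}e^{\tau\Phi_i}$ is a slightly cleaner choice than the paper's linear parametrization in density, velocity and temperature: it makes $\partial_\tau^2 M_{i,\tau}|_{\tau=0}=M_{i,-}\Phi_i^2$ exact (the paper's family carries an extra term $M_{i,-}\,\partial^2\log M_{i,\tau}|_{\tau=0}$, a collision-invariant term that $\mathbf{L}$ annihilates), it naturally gives the two species independent density parameters, and your dominated-convergence remark supplies a justification the paper omits.
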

\begin{proof} Assume that $g=\sum_{i=-1}^{4}\omega_i\chi_i$ with $\omega_i\in \mathbb{R}.$ For any $0<\vep'\ll1$, we introduce a parametrized bi-Maxwellian $\mathbf{M}_{\vep'}=[M_{A,\vep'},M_{B,\vep'}]^T$ with
$M_{i,\vep'}=\f{\rho_i(1+\vep'\omega_{0})}{[2\pi(1+\vep'\omega_4)]^{3/2}}
e^{-\f{m_{i}|v-\vep'\b{\omega}|}{2(1+\vep'\omega_4)}}$, $i=A,B$.
Here, we have denoted $\b{\omega}=(\omega_1,\omega_2,\omega_3)$. A direct computation shows that
$$\left(
\begin{array}{cc}
	M_{A,-}^{-1/2}& 0 \\
	0&M_{B,-}^{-1/2}  \\
\end{array}
\right)\f{\dd^2 \mathbf{Q}(\mathbf{M}_{\vep'})}{\dd \vep'^2}\bigg|_{\vep'=0}=2\mG(\mathbf{g})-\mathbf{L}\left([\f{g_A^2}{\sqrt{M_{A,-}}},\f{g_B^2}{\sqrt{M_{B,-}}}]^T\right).
$$
Since $\mathbf{Q}(\mathbf{M}_{\vep'})\equiv\mathbf{0}$,  the identity $\f{\dd^2\mathbf{Q}(\mathbf{M}_{\vep'})}{\dd \vep'^2}\bigg|_{\vep'=0}=\mathbf{0}$ directly yields \eqref{Gag}.
\end{proof}

Then we can prove the following result showing the exact value of the principle part for $\zeta_{\vep}$ in \eqref{def.veps}.

\begin{lemma}\label{lm2.4}
It holds that $\zeta_{\vep}$ can be represented as $\zeta_{\vep}=-\f23c_0\b{\alpha}\lambda+\vep\zeta'_{\vep}$ with $\zeta'_{\vep}$ being bounded uniformly in $\vep$. Here, $c_0$ is the sound speed, and $\b{\alpha}$ and $\lambda$ are defined in \eqref{3.1.4-1} and \eqref{3.1.4-4},  respectively.
\end{lemma}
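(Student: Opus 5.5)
The plan is to expand $\zeta_{\vep}=\langle\varphi_{\vep},\mathbf{\Gamma}(\phi_{\vep})\rangle$ in $\vep$ using \eqref{expanphi} and \eqref{Defvarphi}. Since $\phi_{\vep}=\phi_0+\vep\varphi_{\vep}$ and $\varphi_{\vep}=\phi_1+\vep\Xi_{\vep}$, bilinearity of $\mG$ gives
$$
\zeta_{\vep}=\langle\phi_1,\mG(\phi_0)\rangle+\vep\,\zeta'_{\vep},
$$
where $\zeta'_{\vep}$ collects $\langle\Xi_{\vep},\mG(\phi_{\vep})\rangle$ and $\langle\phi_1,\mG(\phi_0,\varphi_{\vep})+\mG(\varphi_{\vep},\phi_0)+\vep\mG(\varphi_{\vep})\rangle$. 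These terms are bounded uniformly in $\vep$ for the following reason. Each function entering them has Gaussian decay $e^{-q|v|^2/4}$: $\phi_0$ and the $\chi_j$ explicitly, $\Xi_{\vep}$ by \eqref{boundXi}, and $\phi_1'$ as the solution of \eqref{phi1prime}, whose decay is part of Proposition \ref{prop2.1}. On such functions $\mG$ obeys the standard weighted bound $|\nu^{-1}\mG(\mathbf{f},\mathbf{g})|\lesssim |w\mathbf{f}|_{L^\infty_v}|w\mathbf{g}|_{L^\infty_v}/w$. Integrating against another Gaussian-decaying function then gives a finite constant, independent of $\vep$.

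For the leading term I will use the identity \eqref{Gag}, applicable because $\phi_0\in\text{Ker}\,\mathbf{L}$. Write $\mathbf{h}:=[\phi_{0,A}^2M_{A,-}^{-1/2},\phi_{0,B}^2M_{B,-}^{-1/2}]^T$, which still has Gaussian decay. Then \eqref{Gag} and self-adjointness of $\mathbf{L}$ give
$$
\langle\phi_1,\mG(\phi_0)\rangle=\tfrac12\langle\mathbf{L}\phi_1,\mathbf{h}\rangle .
$$
By \eqref{3.1.4-2}, the kernel part $\sum\beta_j\chi_j$ drops out, and \eqref{phi1prime} yields $\mathbf{L}\phi_1=\b{\alpha}\mathbf{L}\phi_1'=-\b{\alpha}\lambda(v_1-c_0)\phi_0'$. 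Since $\phi_0=\b{\alpha}\phi_0'$, we have $\mathbf{h}=\b{\alpha}^2\mathbf{h}'$ with $\mathbf{h}'$ built from $\phi_0'$ in the same way. Therefore
$$
\langle\phi_1,\mG(\phi_0)\rangle=-\tfrac12\b{\alpha}^3\lambda\, I,\qquad I:=\sum_{i=A,B}\int_{\mathbb{R}^3}(v_1-c_0)\,h_i(v)^3M_{i,-}(v)\,\dd v,
$$
where $h_i=1+c_0m_iv_1+\tfrac13(m_i|v|^2-3)$ is the polynomial factor in $\phi_{0,i}'=h_iM_{i,-}^{1/2}$. The claim then reduces to the identity $I=\frac{4c_0}{3\b{\alpha}^2}=\frac{40c_0\b{\rho}_-}{9}$, using $\b{\alpha}^2=\frac{3}{10\b{\rho}_-}$ from \eqref{3.1.4-1}.

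The main obstacle is evaluating the Gaussian moment $I$ and checking that the two species combine into the mixture quantities. For each species I will substitute $w=\sqrt{m_i}\,v$, which turns $M_{i,-}$ into $\rho_{i,-}$ times the standard Gaussian (recall $u_-=0$, $\theta_-=1$). The polynomial becomes $h_i=1+c_0\sqrt{m_i}w_1+\tfrac13(|w|^2-3)$, and the weight becomes $m_i^{-1/2}w_1-c_0$. I will expand the cube and keep only the even moments. These involve the standard values $\langle w_1^2\rangle=1$, $\langle w_1^4\rangle=3$, $\langle(|w|^2-3)^2\rangle=6$, $\langle w_1^2(|w|^2-3)\rangle=2$, $\langle(|w|^2-3)^3\rangle=24$, and $\langle w_1^2(|w|^2-3)^2\rangle=\langle(|w|^2-3)^2\rangle+8=14$.

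The result should be a combination of $\rho_{i,-}$ and $\rho_{i,-}m_i c_0^2$ in which the factors $m_i^{-1/2}$ and $\sqrt{m_i}$ pair up, so that no fractional powers of $m_i$ survive. Summing over $i$ produces $\b{\rho}_-$ and $\b{m}_-c_0^2$. Substituting $\b{m}_-c_0^2=\frac53\b{\rho}_-$ should give $I=\frac{40}{9}c_0\b{\rho}_-$, hence
$$
\zeta_{\vep}=-\tfrac12\b{\alpha}^3\lambda\cdot\tfrac{4c_0}{3\b{\alpha}^2}+\vep\zeta'_{\vep}=-\tfrac23c_0\b{\alpha}\lambda+\vep\zeta'_{\vep}.
$$
I will carry out this bookkeeping carefully, because any sign or normalization error in $\chi_4$ or $\b{\alpha}$ would spoil the exact constant.
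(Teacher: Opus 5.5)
Your proposal is correct and follows the paper's route. You isolate $\zeta_0=\langle\phi_1,\mG(\phi_0)\rangle$ and evaluate it with \eqref{Gag} and \eqref{phi1prime}; the paper writes this term as $\b{\alpha}\langle\phi_1',\mG(\phi_0)\rangle$, discarding the kernel part of $\phi_1$ through the collision invariants, whereas you discard it via self-adjointness of $\mathbf{L}$, which is equivalent. Your explicit moment computation, which the paper only calls ``direct'', checks out: it gives $I=\sum_{i=A,B}\rho_{i,-}\big(\frac{70}{9}c_0-2c_0^3m_i\big)=\frac{40}{9}c_0\b{\rho}_-$ after using $\b{m}_-c_0^2=\frac53\b{\rho}_-$, hence $\zeta_0=-\frac23c_0\b{\alpha}\lambda$.
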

\begin{proof} We see that
\begin{align}
\zeta_{\vep}=\b{\alpha}\langle\phi_1',\mG(\phi_0)\rangle
+\vep\big[\langle\Xi_{\vep},\mG(\phi_{\vep})\rangle+\langle\phi_1',\mG(\phi_{\vep},\varphi_{\vep})+
\mG(\varphi_{\vep},\phi_0)\rangle\big]
:=\zeta_0+\vep\zeta_{\vep}'.\nonumber
\end{align}
Using \eqref{Gag}, \eqref{2.1.2}, \eqref{3.1.4-1} and \eqref{phi1prime}, one directly computes that 
$
\zeta_0=-\f{2}{3}\b{\alpha}c_0\lambda.
$ This completes Lemma \ref{lm2.4}. 
\end{proof}

We further decompose $z(y)=z_0(y)+\vep z_1(y)$, where the principal part $z_0(y)$ satisfies the exact far-field conditions that
$$\lim_{y\rightarrow-\infty}z_0(y)=\lim_{y\rightarrow-\infty}z(y)=0, \quad \lim_{y\rightarrow+\infty}z_0(y)=\lim_{y\rightarrow+\infty}z(y)=z_{\infty},
$$
and $z_1(y)$ is the remainder with the homogeneous far fields. We denote
$$
\b{\lambda}=\zeta_{\vep}z_\infty\text{ and }
\lambda':=\vep^{-1}(\lambda-\b{\lambda})=\f3{2c_0\b{\alpha}}\zeta_\vep'+
\langle(v_1-s)\varphi_{\vep},\mathbf{g}_{\infty}\rangle\zeta_{\vep}.
$$
In the last equality above we have used \eqref{3.1.8-3} together with Lemma \ref{lm2.4}. Then, by \eqref{3.1.11}, the leading term $z_0$ satisfies the following Burgers equation:
\begin{equation}\label{3.1.12}
\left\{
\begin{aligned}
&\pa_yz_0-\b{\lambda}z_0+\zeta_{\vep}z_0^2=0,\\
&\lim_{y\rightarrow -\infty}z_0(y)=0,\quad \lim_{y\rightarrow +\infty}z_0(y)=z_{\infty},
\end{aligned}\right.
\end{equation}
and the equation for the remainder $z_1$ reads as
\begin{equation}\label{3.1.13}
\left\{
\begin{aligned}
&\pa_yz_1-\lambda z_1+2\zeta_{\vep}z_0z_1=-\vep\zeta_{\vep}z_1^2+\lambda' z_0+\tilde{r}_1:=r_1,\\
&\lim_{y\rightarrow\pm\infty}z_1(y)=0,
\end{aligned}
\right.
\end{equation}
where $\tilde{r}_1$ is given in \eqref{3.1.11.1}.
Now, from \eqref{3.1.12}, we can explicitly solve
\begin{equation}\label{z0}
z_0(y)=\f{1}{2}\{\tanh(\f12\b{\lambda} y)+1\}z_{\infty}.
\end{equation}
For convenience, we denote \begin{align}\label{p}
p(y):=\f{1}{2}\{\tanh(\f12\b{\lambda} y)+1\}.
\end{align}

Next, to derive the equation kinetic part $\psi(y,v)$, taking $\vep^{-1}(I-\pi_*)$ to \eqref{3.0.1}  and using Lemma \ref{lm2.2}, we obtain the following equation of $\psi$:
\begin{align}\label{3.1.14}
(v_1-s)\pa_y \psi+\vep^{-1}\mathbf{L}_1\psi=z(I-\pi_*)\mu_{\vep}+\vep^{-1}(I-\pi_*)\mathbf{\Gamma}(z\phi_{\vep}+\vep\psi).
\end{align}
We seek for $\psi$ in the form of $\psi=\psi_0+\vep^{1/2}\psi_1$, where $\psi_0$ is defined by
\begin{align}\label{3.1.14-1}
\psi_0(y,v):=p^2(y)\psi_{\infty}(v).
\end{align}
Note that the choice \eqref{3.1.14-1} is designed to eliminate the diverse inhomogeneous terms in \eqref{3.1.14}. From the fact that $\vep^{-1}\mathbf{Lf_{\infty}}=\mG(\mathbf{f}_{\infty},\mathbf{f}_{\infty})$, it holds that
$$
\vep^{-1}\mathbf{L}_1\psi_{\infty}=z_{\infty}(I-\pi_{*})\mu_{\vep}+\vep^{-1}(I-\pi_*)\mG(z_{\infty}\phi_{\vep}+\vep\psi_{\infty}).
$$
Combining the above equation with \eqref{3.1.14} and \eqref{3.1.14-1}, the remainder equation for $\psi_1$ reads as
\begin{equation}
	\left\{\begin{aligned}
		&(v_1-s)\pa_y \psi_1+\vep^{-1}\mathbf{L}_1\psi_1=\vep^{-1/2}[-(v_1-s)\pa_{y}\psi_0+(z-p^2z_{\infty})(I-\pi_*)\mu_{\vep}\notag\\
		&\qquad\qquad\qquad\qquad\qquad\qquad+\vep^{-1}(I-\pi_*)\big(\mathbf{\Gamma}(z\phi_{\vep}+\vep\psi)-p^2\mG(z_{\infty}\phi_{\vep}+\vep\psi_{\infty})\big)],\\
		&\lim_{y\rightarrow\pm\infty}\psi_1(y,v)=0,\qquad\langle (v_1-s)\chi_i,\psi_1\rangle=0,\qquad i=-2,\cdots,4,
	\end{aligned}\right.
\end{equation}
To further simplify the equation, notice from \eqref{z0} and \eqref{p} that 
$$
z-p^2z_{\infty}=p(1-p)z_{\infty}+\vep z_1.
$$
Moreover, it holds that
\begin{align*}
	&\mathbf{\Gamma}(z\phi_{\vep}+\vep\psi)-p^2\mG(z_{\infty}\phi_{\vep}+\vep\psi_{\infty})\notag\\
	&=\mathbf{\Gamma}(z\phi_{\vep}+\vep\psi,\vep z_1\phi_{\vep}+\vep^{3/2}\psi_{1})+\mathbf{\Gamma}(\vep z_1\phi_{\vep}+\vep^{3/2}\psi_{1},z_0\phi_{\vep}+\vep \psi_0)\notag\\
	&\quad+p^2\mathbf{\Gamma}( z_{\infty}\phi_{\vep}+\vep p\psi_{\infty},z_{\infty}\phi_{\vep}+\vep p\psi_{\infty})-p^2\mG(z_{\infty}\phi_{\vep}+\vep\psi_{\infty})\notag\\
	&=\mathbf{\Gamma}(z\phi_{\vep}+\vep\psi,\vep z_1\phi_{\vep}+\vep^{3/2}\psi_{1})+\mathbf{\Gamma}(\vep z_1\phi_{\vep}+\vep^{3/2}\psi_{1},z_0\phi_{\vep}+\vep \psi_0)\notag\\
	&\quad+p^2(p-1)\big[\mG\big(z_{\infty}\phi_{\vep}+\vep\psi_{\infty},\psi_{\infty}\big)+\mG\big(\psi_{\infty},z_{\infty}\phi_{\vep}+\vep p\psi_{\infty}\big)\big].
\end{align*}
We collect the above identities to rewrite the equation for $\psi_1$ as
\begin{equation}\label{3.1.15}
\left\{\begin{aligned}
&(v_1-s)\pa_y \psi_1+\vep^{-1}\mathbf{L}_1\psi_1=\vep^{-1/2}(\mathbf{J}_2+\mathbf{N}_2),\\
&\lim_{y\rightarrow\pm\infty}\psi_1(y,v)=0,\\
&\langle (v_1-s)\chi_i,\psi_1\rangle=0,\quad {i=-2,\cdots,4,}
\end{aligned}\right.
\end{equation}
where
\begin{align}\label{3.1.15-1}
\mathbf{J}_2=&-(v_1-s)\pa_{y}\psi_0+p(1-p)z_{\infty}(I-\pi_*)\mu_{\vep}\nonumber\\
&-p^2(1-p)(I-\pi_*)\big[\mG\big(z_{\infty}\phi_{\vep}+\vep\psi_{\infty},\psi_{\infty}\big)+\mG\big(\psi_{\infty},z_{\infty}\phi_{\vep}+\vep p\psi_{\infty}\big)\big],
\end{align}
and the nonlinear term
\begin{align}\label{3.1.15-2}
\mathbf{N}_2=&\vep^{1/2} z_1(I-\pi_*)\mu_{\vep}\notag\\
&+(I-\pi_*)\big[\mG(z\phi_{\vep}+\vep\psi,z_1\phi_{\vep}+\vep^{1/2}\psi_1)+\mG(z_1\phi_{\vep}+\vep^{1/2}\psi_1,
z_0\phi_{\vep}+\vep\psi_0)\big].
\end{align}
Here, we see $\mathbf{J}_2\sim O(1)$, thanks to our choice \eqref{3.1.14-1}. To solve \eqref{3.1.15}, one should take care of the kernel of $\mathbf{L}_1$ in \eqref{3.1.8-1}. Note that the conservation laws in $\eqref{3.1.15}_3$ kick the solution $\psi_1$ off Ker\,$\mathbf{L}_1$. Hence, following the idea in \cite{CN}, we introduce the following compensate operator $\mathbf{L}_2$ by
\begin{align}\label{DefL2}
	\mathbf{L}_2\psi:=\sum_{i=-2}^4\langle(v_1-s)\chi_i,\psi\rangle(v_1-s)\chi_i.
\end{align}
Let $\mathbf{H}:=\mathbf{L}_1+\mathbf{L}_2$, then we have the following result.

\begin{lemma}
The following three properties hold:

1. $\mathbf{H}$ is self-adjoint and positive definite. Moreover, there exists a positive constant $c_1$, such that
\begin{align}\label{3.1.16}
\langle \mathbf{H}\psi,\psi\rangle\geq c_1|\nu^{1/2}\psi|_{{L^2_v}}^2.
\end{align}

2. $\mathbf{H}=\nu-\b{K}$. Here, the diagonal operator $\nu$ is defined in \eqref{2.1.2.1} and $\b{K}$ can be decomposed into $\b{K}=\b{K}_m+\b{K}_r$, where the small part $\b{K}_m$ satisfies \eqref{2.1.6} after replacing $K_m$ by $\b{K}_m$, and $\b{K}_r=[\b{K}_{r,A},\b{K}_{r,B}]^{T}$ is an integral operator in the sense that
\begin{align}\notag
\b{K}_{r,i}\mathbf{f}=\sum_{j=A,B}\int_{\mathbb{R}^3}\b{k}^{ji}_r(v,\eta)f_j(\eta)\dd \eta,\quad i=A,B.
\end{align}
Moreover, the estimates \eqref{2.1.8}-\eqref{2.1.11} also hold after replacing the kernal $K^{ji}_{r}(v,\eta)$ by $\b{K}^{ji}_{r}(v,\eta)$.\\

3. If $\psi_1$ is the solution to \eqref{3.1.15}, then $\psi_1$ must solve
\begin{equation}\label{3.1.17}
\left\{\begin{aligned}
&(v_1-s)\pa_y \psi_1+\vep^{-1}\mathbf{H}\psi_1=\vep^{-1/2}(\mathbf{J}_2+\mathbf{N}_2),\\
&\lim_{y\rightarrow\pm\infty}\psi_1(y,v)=0.
\end{aligned}\right.
\end{equation}
\end{lemma}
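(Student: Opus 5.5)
We treat the three items in order, working throughout with the Hilbert space $L^2_v$ and the operators $\pi,\pi_*$, $\mathbf L_1=(I-\pi_*)\mathbf L(I-\pi)$ and $\mathbf L_2$ from \eqref{DefL2}.

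\textbf{Item 1: self-adjointness and the coercivity \eqref{3.1.16}.}
Self-adjointness of $\mathbf H$ is immediate. Lemma \ref{lm2.2}(5) gives it for $\mathbf L_1$, and $\mathbf L_2$ is a finite sum of rank-one symmetric operators $\langle a_i,\cdot\rangle a_i$ with $a_i=(v_1-s)\chi_i$. Nonnegativity follows in the same way, since $\langle\mathbf L_2\psi,\psi\rangle=\sum_i\langle a_i,\psi\rangle^2\ge0$.

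For definiteness we use the kernel. If $\langle\mathbf H\psi,\psi\rangle=0$, then $\psi\in\mathrm{Ker}\,\mathbf L_1=\mathrm{span}\{\chi_{-2},\dots,\chi_4\}$ and $\langle a_i,\psi\rangle=0$ for $i=-2,\dots,4$. Hence $\psi=0$, provided the Gram matrix $G_{ij}=\langle(v_1-s)\chi_i,\chi_j\rangle$ with $i,j=-2,\dots,4$ is nonsingular. We check this using \eqref{3.1.2}:
\begin{align*}
\langle(v_1-s)\chi_i,\varphi_\vep\rangle&=-\vep^{-1}\langle(v_1-s)\chi_i,\phi_0\rangle,\qquad i\ge-1,\\
\langle(v_1-s)\varphi_\vep,\varphi_\vep\rangle&=-\vep^{-1}-2\vep^{-2}\langle(v_1-s)\phi_0,\phi_0\rangle=O(1).
\end{align*}
The last line is bounded because $\langle(v_1-s)\phi_0,\phi_0\rangle=-\vep/2+O(\vep^2)$. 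A determinant computation then reduces nonsingularity to the nondegeneracy of $\langle(v_1-c_0)\chi_i,\chi_j\rangle$ restricted to the complement of $\phi_0$, together with the known nonzero coefficient $\langle(v_1-c_0)\phi_1',\phi_0'\rangle$, which comes from $\lambda>0$.

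The quantitative bound $c_1|\nu^{1/2}\psi|^2$ is the delicate step. We would follow \cite{CN}: split $\psi=(I-P)\psi+P\psi$ and use Lemma \ref{spectralgap}. Writing $\mathbf L_1=\mathbf L-\pi_*\mathbf L-\mathbf L\pi+\pi_*\mathbf L\pi$, each correction is, by \eqref{pi*L}, of size $O(\vep)$ in the coefficient of $(v_1-s)\phi_\vep$ applied to $\pi\psi$, or involves $\mu_\vep$. The remaining finite-dimensional part is controlled by $\mathbf L_2$ and by the component $\langle\varphi_\vep,\psi\rangle$. If one argued by compactness, the constant would be degenerate in $\vep$; to keep $c_1$ uniform in $\vep$ we would instead argue by contradiction with a sequence $\vep_n\to0$, using the explicit expansions of Proposition \ref{prop2.1} and the limiting operator at $\vep=0$.

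\textbf{Item 2: the decomposition $\mathbf H=\nu-\bar K$.}
Set
\[
\bar K=K+\pi_*\mathbf L+\mathbf L\pi-\pi_*\mathbf L\pi-\mathbf L_2 .
\]
Every added term is finite rank, with kernels of the form $a(v)b(\eta)$ where $a,b$ are Maxwellian-decaying functions. This follows from \eqref{3.1.4}, \eqref{boundXi}, and the fact that $\mathbf L\phi_\vep=\vep^2\mu_\vep-\vep\lambda(v_1-s)\phi_\vep$ decays like $e^{-q|v|^2/4}$. We put these rank-one kernels into $\bar K_r$ and set $\bar K_m=K_m$. A product kernel bounded by $Ce^{-c(|v|^2+|\eta|^2)}$ is dominated by the first term of \eqref{2.1.8}--\eqref{2.1.9} for $\gamma\le 0$; for $\gamma>0$ we add a harmless bounded Gaussian term. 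The integral bounds \eqref{2.1.10}--\eqref{2.1.11} then follow trivially for small $q$.

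\textbf{Item 3: the equation \eqref{3.1.17}.}
The orthogonality conditions in \eqref{3.1.15} give $\mathbf L_2\psi_1=0$. Therefore $\mathbf L_1\psi_1=\mathbf H\psi_1$, and \eqref{3.1.15} becomes \eqref{3.1.17} directly.

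The main obstacle is the uniform-in-$\vep$ coercivity constant in Item 1.
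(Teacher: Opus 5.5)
Items 2 and 3 are handled exactly as in the paper. There, $\mathbf H$ is written as $\nu-K$ plus finitely many rank-one terms built from $(v_1-s)\phi_\vep$, $\mathbf L\varphi_\vep$ and $(v_1-s)\chi_i$, all with Gaussian decay uniform in $\vep$, and $\mathbf L_2\psi_1=0$ follows from the orthogonality conditions.

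For item 1 your route is genuinely different and fills in what the paper leaves implicit. The paper simply asserts positive definiteness and says \eqref{3.1.16} follows ``by the second property''. That is implicitly a compactness argument resting on the $\vep$-independent bounds for $\bar K$. You instead verify the trivial kernel explicitly, through the Gram matrix of $\{(v_1-s)\chi_i\}_{i=-2}^4$ on $\mathrm{Ker}\,\mathbf L_1$, and obtain uniformity by contradiction along $\vep_n\to0$. That is the same compactness mechanism, made explicit; your remark that compactness gives a degenerate constant applies only to a fixed-$\vep$ argument. The contradiction argument needs the trivial kernel of the limit operator at $\vep=0$. This is exactly your Gram computation there: $\langle(v_1-c_0)\chi_i,\chi_j\rangle_{i,j\ge-1}$ has kernel spanned by $\phi_0$, and the extra row pairs with $\phi_0$ to give $-\langle\phi_0,\phi_0\rangle=-1\neq0$.

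\textbf{Two corrections.} First, your auxiliary computation is off. By \eqref{Pro1}--\eqref{Pro2}, $\langle(v_1-s)\phi_0,\phi_0\rangle=\vep$ exactly, not $-\vep/2$. Also $\langle(v_1-s)\chi_i,\varphi_\vep\rangle=-\langle\chi_i,\phi_0\rangle$, and the correct identity is $\langle(v_1-s)\varphi_\vep,\varphi_\vep\rangle=\vep^{-2}\langle(v_1-s)\phi_0,\phi_0\rangle-\vep^{-1}=0$. Your two errors cancel, and the Gram argument works with this zero entry. Second, the corrections in $\mathbf L_1=\mathbf L-\pi_*\mathbf L-\mathbf L\pi+\pi_*\mathbf L\pi$ are not small. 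For instance $\vep\lambda(v_1-s)\pi\psi=-\lambda\langle(v_1-s)\phi_\vep,\psi\rangle(v_1-s)\phi_\vep$ is $O(1)$; it is controlled by $\mathbf L_2$, not by $\vep$.

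\textbf{A shortcut.} The obstacle you identify disappears without any compactness. Set $Tf:=f+\langle(v_1-s)\phi_\vep,f\rangle\varphi_\vep$. Then $I-\pi_*=T^*$ and $\mathbf L(I-\pi)=\mathbf LT$ (since $\mathbf L\phi_0=0$), so $\mathbf L_1=T^*\mathbf LT$. By Lemma~\ref{spectralgap}, $\langle\mathbf Hf,f\rangle\ge c\,|\nu^{1/2}(I-P)Tf|_{L^2_v}^2+\sum_{i=-2}^4\langle(v_1-s)\chi_i,f\rangle^2$. The sum controls $a=\langle(v_1-s)\phi_\vep,f\rangle$, and hence $(I-P)f=(I-P)Tf-a(I-P)\varphi_\vep$. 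It then controls $Pf$ through the $7\times6$ matrix $\langle(v_1-s)\chi_i,\chi_j\rangle$, $i=-2,\dots,4$, $j=-1,\dots,4$. This matrix has full column rank at $\vep=0$ by the computation above, hence uniformly for small $\vep$. The result is $c_1$ explicitly uniform in $\vep$.
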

\begin{proof}
    The third property holds by noticing that $\mathbf{L}_2\equiv0$ along the dynamics \eqref{3.1.15}.  For the second one, from the definition of $\mathbf{L}_1$ and $\mathbf{L}_2$ in \eqref{DefL1} and \eqref{DefL2}, we rewrite
    \begin{align*}
       \mathbf{H}\mathbf{f}=& (I-\pi_*)\mathbf{L}(I-\pi)\mathbf{f}+\mathbf{L}_2\mathbf{f}\notag\\
       =&\mathbf{L}\mathbf{f}+(v_1-s)\phi_{\vep}\langle\mathbf{L}\varphi_{\vep},\mathbf{f}\rangle+\vep^{-1}\langle(v_1-s)\phi_{\vep},\mathbf{f}\rangle\mathbf{L}\phi_{\vep}\notag\\
       &-\vep^{-1}(v_1-s)\langle(v_1-s)\phi_{\vep},\mathbf{f}\rangle\phi_{\vep}\langle\varphi_{\vep},\mathbf{L}\phi_{\vep}\rangle+\sum_{i=-2}^4\langle(v_1-s)\chi_i,\mathbf{f}\rangle(v_1-s)\chi_i\notag\\
       =&(\nu-K)\mathbf{f}+(v_1-s)\phi_{\vep}\langle\mathbf{L}\varphi_{\vep},\mathbf{f}\rangle+\langle(v_1-s)\phi_{\vep},\mathbf{f}\rangle\mathbf{L}\varphi_{\vep}\notag\\
       &-(v_1-s)\langle(v_1-s)\phi_{\vep},\mathbf{f}\rangle\phi_{\vep}\langle\varphi_{\vep},\mathbf{L}\varphi_{\vep}\rangle+\sum_{i=-2}^4\langle(v_1-s)\chi_i,\mathbf{f}\rangle(v_1-s)\chi_i.
    \end{align*}
    In the last equality above, we have used Lemma \ref{leL} and the fact that
    $$
    \mathbf{L}\phi_{\vep}=\mathbf{L}(\phi_0+\vep\varphi_{\vep})=\vep\mathbf{L}\varphi_{\vep}.
    $$
    Thus, the second property follows from Lemma \ref{leK} and the fast decay of $\chi_i$ and $\phi_\vep$ in \eqref{2.1.2}, \eqref{expanphi}, \eqref{3.1.4-1}, \eqref{3.1.4-2} and \eqref{boundXi}. Note that the new bounds for $\b{K}$ are independent of $\vep$. Then we can also prove \eqref{3.1.16} by the second property.
\end{proof}

In order to establish the equivalency between two problems \eqref{3.1.15} and \eqref{3.1.17}, we still need to prove that the solution to \eqref{3.1.17} solves \eqref{3.1.15}.

\begin{lemma}
	If $\psi_1$ is the solution to \eqref{3.1.17}, then $\psi_1$ must solve \eqref{3.1.15}.
\end{lemma}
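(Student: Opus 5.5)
The plan is to show that the moments $a_i(y):=\langle (v_1-s)\chi_i,\psi_1(y)\rangle$, $i=-2,\dots,4$, vanish identically. Once they do, $\mathbf{L}_2\psi_1\equiv0$, so $\mathbf{H}\psi_1=\mathbf{L}_1\psi_1$ and \eqref{3.1.17} becomes \eqref{3.1.15}.

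To obtain an equation for the $a_i$, I will take the inner product of \eqref{3.1.17} with $\chi_i$ for each $i=-2,\dots,4$. The left side gives
\[
\pa_y a_i+\vep^{-1}\langle \chi_i,\mathbf{L}_1\psi_1\rangle+\vep^{-1}\langle\chi_i,\mathbf{L}_2\psi_1\rangle .
\]
By self-adjointness of $\mathbf{L}_1$ and \eqref{3.1.8-1}, the middle term vanishes. Writing $\mathbf{L}_2\psi_1=\sum_j a_j(v_1-s)\chi_j$, the last term equals $\vep^{-1}\sum_j G_{ij}a_j$, where $G_{ij}:=\langle \chi_i,(v_1-s)\chi_j\rangle$ is a constant symmetric $7\times7$ matrix. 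On the right side I need $\langle\chi_i,\mathbf{J}_2+\mathbf{N}_2\rangle=0$ for all $i$.
\begin{itemize}
\item Every term of $\mathbf{N}_2$, and all of $\mathbf{J}_2$ except $-(v_1-s)\pa_y\psi_0$, carry the factor $(I-\pi_*)$. The range of $I-\pi_*$ is orthogonal to $\ker(I-\pi)^*$. Since $\pi_*$ is built from $\varphi_\vep$ and $\phi_\vep$, I will use Lemma \ref{lm2.2}(2) and \eqref{3.1.2} to check that $\langle\chi_i,(I-\pi_*)\mathbf{g}\rangle=\langle(I-\pi)\chi_i,\mathbf{g}\rangle$ for $i=-1,\dots,4$. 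Here $\pi\chi_i=0$ by \eqref{3.1.2}, so this reduces to $\langle\chi_i,\mathbf{g}\rangle$.
\item For $\mu_\vep$ this vanishes by \eqref{3.1.3}. For the $\mG$ terms it vanishes by the collision invariants \eqref{1.2.2.2}, because $\chi_i$ are the invariants multiplied by $\mathbf{M}^{1/2}_-$.
\item For $i=-2$, I will use that $\langle\varphi_\vep,(I-\pi_*)\mathbf{g}\rangle=0$ directly from \eqref{3.1.6} together with $\langle\varphi_\vep,(v_1-s)\phi_\vep\rangle=1$. This identity follows from \eqref{3.1.2} and \eqref{expanphi}: $\langle(v_1-s)\phi_\vep,\phi_0\rangle=0$, so $-\vep=\vep\langle(v_1-s)\phi_\vep,\varphi_\vep\rangle$, and the resulting sign is absorbed in the definition of $\pi_*$.
\item The remaining term $\langle\chi_i,(v_1-s)\pa_y\psi_0\rangle=\pa_y(p^2)\langle(v_1-s)\chi_i,\psi_\infty\rangle$ vanishes because $\psi_\infty=(I-\pi)\mathbf{g}_\infty$ satisfies the orthogonality relations. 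For $i=-1,\dots,4$ I will get this from \eqref{piginfty} and \eqref{orthofinfty}. For $i=-2$ it follows from $\pi(I-\pi)=0$, since $\pi$ itself is defined by $\langle(v_1-s)\phi_\vep,\cdot\rangle$; it is then translated to $\varphi_\vep$ through \eqref{expanphi}.
\end{itemize}
This yields the linear ODE system $G\,a'+\vep^{-1}G a=0$, up to care about which pairing is used. More precisely, I will pair with $(v_1-s)$ placed appropriately so that the system reads $G\pa_y a+\vep^{-1}G a=0$ with $a(\pm\infty)=0$.

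The main obstacle is closing this ODE argument, because $G$ may be singular or indefinite. If $G$ is invertible, then $a'=-\vep^{-1}a$, so $a(y)=e^{-y/\vep}a(0)$. This grows as $y\to-\infty$, and the limit $a(-\infty)=0$ then forces $a\equiv0$. If $G$ is singular, I will instead argue with the scalar quantity $\langle a, Ga\rangle$ when that is enough. Otherwise I will use directly the coercivity \eqref{3.1.16}: pairing \eqref{3.1.17} with $\sum_j a_j\chi_j$ should give the scalar relation $\tfrac{d}{dy}E+\vep^{-1}|a|^2=0$ with $E\to0$ at both ends, which integrates to $a\equiv0$. Establishing this last identity is the step I expect to be delicate.
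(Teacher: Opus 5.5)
Your route is the same as the paper's. You pair \eqref{3.1.17} with $\chi_i$, $i=-2,\dots,4$, drop the $\mathbf{L}_1$ term by self-adjointness and \eqref{3.1.8-1}, and show $\langle\chi_i,\mathbf{J}_2+\mathbf{N}_2\rangle=0$. Your direct checks of the right-hand side are sound:
\begin{itemize}
\item $\langle\chi_i,(I-\pi_*)\mathbf{g}\rangle=\langle\chi_i,\mathbf{g}\rangle$ for $i\ge -1$, by \eqref{3.1.2};
\item $\langle\varphi_\vep,(I-\pi_*)\mathbf{g}\rangle=0$, because $\langle(v_1-s)\phi_\vep,\varphi_\vep\rangle=-1$ (not $+1$ as you first wrote);
\item $\psi_\infty$ is orthogonal to the moments $(v_1-s)\chi_i$.
\end{itemize}
One small correction: for $i=-1,\dots,4$ you need each relation $\langle(v_1-s)\chi_i,\mathbf{f}_\infty\rangle=0$ separately. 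These are the Rankine--Hugoniot conditions, i.e.\ the identities before the summation in the proof of Lemma \ref{lm2.3}. Their $\phi_0$-combination \eqref{orthofinfty} alone is not enough.

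The gap is in the final step, and it comes from a miscomputation. The transport term gives $\langle\chi_i,(v_1-s)\pa_y\psi_1\rangle=a_i'$, with no matrix in front. So, with the constant real symmetric matrix $G_{ij}=\langle\chi_i,(v_1-s)\chi_j\rangle$, the system is
\[
a'+\vep^{-1}Ga=0,\qquad \lim_{y\to\pm\infty}a(y)=0,
\]
not $Ga'+\vep^{-1}Ga=0$. No pairing produces the latter, and your reduction to $a'=-\vep^{-1}a$ for invertible $G$ is an artifact of that form.

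Your fallback also fails. Pairing with $\sum_j a_j\chi_j$ gives $\tfrac12(|a|^2)'+\vep^{-1}a\cdot Ga=0$, not $E'+\vep^{-1}|a|^2=0$. This scalar identity cannot close, because $G$ is genuinely indefinite: $G_{11}=-s\bar{m}_-<0$, while $\langle(v_1-s)\phi_0,\phi_0\rangle=\vep>0$. The coercivity \eqref{3.1.16} plays no role either, since the test function is not $\psi_1$.

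The missing observation is that the correct system closes for any constant symmetric $G$:
\begin{itemize}
\item Write $G=Q\Lambda Q^T$ with $Q$ orthogonal and set $b=Q^Ta$, so that $b_k(y)=e^{-\lambda_k y/\vep}b_k(0)$.
\item If $\lambda_k>0$ this blows up as $y\to-\infty$; if $\lambda_k<0$ it blows up as $y\to+\infty$; if $\lambda_k=0$ it is constant.
\item $a(y)\to0$ at both ends, by dominated convergence using the pointwise limits of $\psi_1$, $w_{q,\vartheta}\psi_1\in L^\infty$, and the Gaussian decay of the $\chi_i$. Hence every $b_k(0)=0$.
\end{itemize}
Thus $a\equiv0$. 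This is exactly the orthogonality condition in \eqref{3.1.15}, and it gives $\mathbf{L}_2\psi_1\equiv0$, so $\mathbf{H}\psi_1=\mathbf{L}_1\psi_1$. No invertibility or sign condition on $G$ is needed.

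Note that the paper's own last step also suppresses $G$: it replaces $\vep^{-1}\langle\mathbf{L}_2\psi_1,\chi_i\rangle$ by $\vep^{-1}\sum_j a_j$ and sums over $i$. The diagonalization above is what actually closes the lemma.
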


\begin{proof}
	We only need to show that 
	\begin{align}\label{L2phi1}
	\mathbf{L}_2\psi_1=0.
	\end{align}
	To achieve this, we first multiply \eqref{3.1.17} by $\chi_i$ with $i=-2,\cdots,4$ and take integration on $v$ to get
	\begin{align}\label{psi1chii}
		\pa_y\langle(v_1-s) \psi_1,\chi_i\rangle +\vep^{-1}\langle \mathbf{L}_2\psi_1,\chi_i\rangle=\vep^{-1/2}\langle(\mathbf{J}_2+\mathbf{N}_2),\chi_i\rangle, 
	\end{align}
where we have used \eqref{3.1.8-1}. For the right hand side above,  it holds that
\begin{align*}
	\langle(v_1-s)\pa_{y}\psi_0,\chi_i\rangle=p^2\langle(v_1-s)\pa_{y}\psi_\infty,\chi_i\rangle=0,
\end{align*}
for $i=-1,\cdots,4$ by \eqref{3.1.10}, and 
\begin{align*}
	&\langle(v_1-s)\pa_{y}\psi_0,\chi_i\rangle=p^2\pa_{y}\langle(v_1-s)(I-\pi)\mathbf{g}_{\infty},\varphi_\vep\rangle\notag\\
	&=p^2\pa_{y}\langle(v_1-s)\mathbf{g}_{\infty},\varphi_\vep\rangle+p^2\pa_{y}\langle(v_1-s)
	\phi_{\vep}\langle(v_1-s)\varphi_{\vep},\mathbf{g}_{\infty}\rangle,\varphi_\vep\rangle\notag\\
	&=p^2\pa_{y}\langle(v_1-s)\mathbf{g}_{\infty},\varphi_\vep\rangle+\frac{1}{\vep}p^2\langle(v_1-s)
	\phi_{\vep},\phi_\vep\rangle\pa_{y}\langle(v_1-s)\varphi_{\vep},\mathbf{g}_{\infty}\rangle=0,
\end{align*}
by \eqref{Defphiinf}, \eqref{piginfty}, \eqref{expanphi}, \eqref{3.1.4-1} and \eqref{3.1.2}. Moreover, it follows from  \eqref{1.2.2.2} and \eqref{3.1.2} that
\begin{align*}
	\langle(I-\pi_*)G,\chi_i\rangle=\langle G,\chi_i\rangle+\langle (v_1-s)\phi_{\vep},\chi_i\rangle \langle\varphi_{\vep},G\rangle=0,
\end{align*}
for $i=-1,\cdots,4$, and we further have
\begin{align*}
	\langle(I-\pi_*)G,\phi_\vep\rangle=\langle G,\phi_\vep\rangle+\langle (v_1-s)\phi_{\vep},\phi_\vep\rangle \langle\varphi_{\vep},G\rangle=\langle G,\phi_\vep\rangle-\vep \langle\vep^{-1}(\phi_\vep-\phi_0),G\rangle=0,
\end{align*}
by \eqref{expanphi}, \eqref{3.1.4-1} and \eqref{3.1.2}. Thus, we see from the above identities, together with \eqref{3.1.15-1} and \eqref{3.1.15-2}, that the right hand side of \eqref{psi1chii} vanishes, which leads to 
	\begin{align*}
		&\pa_y\langle(v_1-s) \psi_1,\chi_i\rangle +\vep^{-1}\langle \mathbf{L}_2\psi_1,\chi_i\rangle\notag\\
	&=\pa_y\langle(v_1-s) \psi_1,\chi_i\rangle +\vep^{-1}\big({\sum_{j=-2}^4}\langle(v_1-s)\chi_j,\psi_1\rangle\big){\sum_{j=-2}^4}\langle (v_1-s)\chi_j,\chi_i\rangle\notag\\
	&=\pa_y\langle(v_1-s) \psi_1,\chi_i\rangle +\vep^{-1}{\sum_{j=-2}^4}\langle(v_1-s)\chi_j,\psi_1\rangle=0,
\end{align*}
for $i=-2,\cdots,4$. We further take summation on $i$ to deduce that
\begin{align*}
	\pa_y{\sum_{j=-2}^4}\langle(v_1-s) \psi_1,\chi_i\rangle +\vep^{-1}{\sum_{j=-2}^4}\langle(v_1-s)\psi_1,\chi_j\rangle=0,
\end{align*}
which, together with the condition $\lim_{y\rightarrow\pm\infty}\psi_1(y,v)=0$, yields
\begin{align*}
{\sum_{j=-2}^4}\langle(v_1-s)\psi_1,\chi_j\rangle=0.
\end{align*}
We finish our proof by combining the above identity with \eqref{DefL2} to get \eqref{L2phi1}.
\end{proof}
It remains to construct the solutions $[z_1(y),\psi_1(y,v)]^T$ to the coupling system \eqref{3.1.13} and \eqref{3.1.17}.
\subsection{Linear problem}
This part is devoted to solve the following linear problems:
\begin{equation}\label{L1}
\left\{
\begin{aligned}
&\pa_yz_1-\lambda z_1+2\zeta_{\vep}z_0z_1=r_1,\\
&\lim_{y\rightarrow\pm\infty}z_1(y)=0,
\end{aligned}
\right.
\end{equation}
and
\begin{equation}\label{L2}
\left\{\begin{aligned}
&(v_1-s)\pa_y \psi_1+\vep^{-1}\mathbf{H}\psi_1=\vep^{-1/2}\mathbf{r}_2,\\
&\lim_{y\rightarrow\pm\infty}\psi_1(y,v)=0.
\end{aligned}\right.
\end{equation}
We start with \eqref{L1}. Due to its shift invariance, we fix
\begin{align}\label{3.2.0}
z_1(0)=z_{1,0},
\end{align}
where $z_{1,0}$ is arbitrary in $ \mathbb{R}$. Then, the solvability of \eqref{L1} is given by the following Proposition.
\begin{proposition}\label{prop3.1}
Let $N\geq0$ be an integer, $\theta\in [0,1]$ and $\varpi> 0$. If $\theta=1$, we further assume that $0< \varpi\leq\f{\lambda}{4}$. For $\vep>0$ sufficiently small, if $\lim_{y\rightarrow\pm\infty}\f{\dd^kr_1}{\dd y^k}=0,\text{ for }k=0,\cdots N$ and $$\sum_{k=0}^N |e^{\varpi|\cdot|^{\theta}}\pa^{k}_yr_1|_{L^\infty_y}<\infty,$$ then \eqref{L1} with \eqref{3.2.0} admits a unique solution which satisfies
\begin{align}\label{3.2.0-1.2}
\sum_{k=0}^N|e^{\varpi|\cdot|^{\theta}}\pa^k_yz_1|_{L^{\infty}_y}\lesssim_{\varpi,\theta}|z_{1,0}|+\sum_{k=0}^N|e^{\varpi|\cdot|^{\theta}}\pa^{k}_yr_1|_{L^\infty_y}.
\end{align}
For $\theta>0$, if we further have $$\sum_{k=0}^N |e^{\varpi|\cdot|^{\theta}}\pa^{k}_yr_1|_{L^2_y}<\infty,
$$
then it holds that
\begin{align}\label{3.2.0-1.1}
\sum_{k=0}^N|e^{\varpi|\cdot|^{\theta}}\pa^k_yz_1|_{L^{2}_y}\lesssim_{\varpi,\theta}|z_{1,0}|+\sum_{k=0}^N|e^{\varpi|\cdot|^{\theta}}\pa^{k}_yr_1|_{L^2_y}.
\end{align}
\end{proposition}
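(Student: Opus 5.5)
This is a scalar linear ODE, so the plan is to solve it explicitly with an integrating factor and then estimate the Duhamel formula directly. Write the coefficient as $a(y):=\lambda-2\zeta_{\vep}z_0(y)$. By \eqref{z0}, \eqref{p} and $\b{\lambda}=\zeta_\vep z_\infty$ we have $2\zeta_\vep z_0=2\b{\lambda}p(y)$, so
\[
a(y)=\lambda-\b{\lambda}-\b{\lambda}\tanh(\tfrac12\b{\lambda}y).
\]
Since $\lambda-\b{\lambda}=\vep\lambda'$ with $\lambda'$ bounded, and $\b{\lambda}=\lambda+O(\vep)>0$, the coefficient behaves as follows: $a(y)\to \lambda+O(\vep)>0$ as $y\to-\infty$, and $a(y)\to-\lambda+O(\vep)<0$ as $y\to+\infty$. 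The homogeneous solution is $E(y):=\exp(\int_0^y a)$. It decays like $e^{-(\lambda-C\vep)|y|}$ at both ends; indeed it is essentially $\mathrm{sech}^2(\tfrac12\b{\lambda}y)$ times $e^{\vep\lambda' y}$. This kernel element is the translation mode $z_0'$, and it is what makes the condition \eqref{3.2.0} necessary for uniqueness.

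Given the normalization, the solution is
\[
z_1(y)=E(y)\Big[z_{1,0}+\int_0^y E(\tau)^{-1}r_1(\tau)\,\dd\tau\Big].
\]
Every solution of the ODE is of this form and vanishes at $\pm\infty$ because $E$ decays. Uniqueness follows, since two solutions differ by $cE$ and $c=0$ is forced by \eqref{3.2.0}.

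For the weighted $L^\infty$ bound with $k=0$, I would split into $y\ge 0$ and $y\le 0$. Take $y\ge0$. The kernel satisfies $E(y)E(\tau)^{-1}=\exp(\int_\tau^y a)\le C e^{-(\lambda-C\vep)(y-\tau)}$ for $0\le\tau\le y$, because $a\le -\lambda+C\vep+Ce^{-\b{\lambda}\tau}$ and the correction is integrable. Then
\[
e^{\varpi y^\theta}|z_1(y)|\lesssim e^{\varpi y^\theta}E(y)|z_{1,0}|+\sup_\tau\big(e^{\varpi\tau^\theta}|r_1(\tau)|\big)\int_0^y e^{-(\lambda/2)(y-\tau)}e^{\varpi(y^\theta-\tau^\theta)}\,\dd\tau .
\]
The key elementary inequality is $y^\theta-\tau^\theta\le (y-\tau)^\theta$ for $\theta\in[0,1]$. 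For $\theta<1$ this gives $\varpi(y-\tau)^\theta\le \tfrac{\lambda}{4}(y-\tau)+C_{\varpi,\theta}$, and for $\theta=1$ the hypothesis $\varpi\le\lambda/4$ gives the same bound directly. Either way the integral is bounded by $C_{\varpi,\theta}$. The first term is bounded similarly, using $E(y)\lesssim e^{-\lambda y/2}$. The case $y\le0$ is symmetric: there $a\ge\lambda-C\vep-Ce^{-\b{\lambda}|\tau|}$, and $E(y)E(\tau)^{-1}=\exp(-\int_y^\tau a)$ decays for $y\le\tau\le0$.

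Derivatives come from the equation. We have $\pa_y z_1=a z_1+r_1$, and every $\pa^j_y a$ is bounded. Inducting, $\pa^k_yz_1$ is a combination of $\pa^j_ya\,\pa^{l}_yz_1$ and $\pa^{k-1}_yr_1$, so the weighted bounds propagate and give \eqref{3.2.0-1.2}.

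For the $L^2$ estimate \eqref{3.2.0-1.1} with $\theta>0$, I would use the same representation. The weighted solution is bounded by a convolution of $|e^{\varpi|\cdot|^\theta}r_1|$ with the integrable kernel $C_{\varpi,\theta}e^{-\lambda|s|/4}\Fi_{s\ge0}$ (and its reflection on the negative side). Young's inequality then gives the $L^2_y$ bound. The term $z_{1,0}E$ lies in weighted $L^2$ because $e^{\varpi|y|^\theta}E(y)\lesssim e^{-\lambda|y|/4}$, which uses the same restriction $\varpi\le\lambda/4$ when $\theta=1$. Derivatives are handled as before.

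The main obstacle is the sharp control of $\int_\tau^y a$ uniformly in $\vep$. The decay rate $\lambda-C\vep$ must stay above $\lambda/2$, and the transition region near $y=0$, where $a$ changes sign, must contribute only a bounded factor. Both follow from the explicit $\tanh$ form and from $\b{\lambda}-\lambda=O(\vep)$ once $\vep$ is small.
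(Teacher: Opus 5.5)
Your proposal is correct and is essentially the paper's argument: you use the explicit Duhamel formula for the scalar ODE with coefficient $\lambda-2\zeta_{\vep}z_0$, a uniform bound $Ce^{-\lambda|y-\tau|/2}$ on the propagator coming from the signs of this coefficient at $\pm\infty$, and weighted convolution estimates. The differences are local: the subadditivity $y^\theta-\tau^\theta\le(y-\tau)^\theta$ and Young's convolution inequality stand in for the paper's Lemma~\ref{lmA.1} and its H\"older absorption step, and the explicit $\tanh$ form replaces the paper's splitting at a point $y_0$.

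One claim needs tightening, namely that every solution vanishes at $\pm\infty$ ``because $E$ decays''. For $\theta=0$, the decay of the Duhamel term needs the hypothesis $r_1\to0$ together with dominated convergence, which is how the paper closes its proof.
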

\begin{proof}
We denote $A(y):=\lambda-2\zeta_{\vep}z_0(y)$. The equation \eqref{L1} with \eqref{3.2.0} can be solved by
\begin{align}\label{solutionz1}
z_1(y)=e^{\int_0^yA(y')\dd y'}z_{1,0}+\int_0^ye^{\int_\tau^yA(y')\dd y'}r_1(\tau)\dd \tau.
\end{align}
Notice that $\lim_{y\rightarrow+\infty} A(y)=\lambda-2\zeta_{\vep}z_{\infty}=-\lambda+O(\vep)<0$ for suitably small $\vep>0$. Then there exists $y_0>0$, such that for all $y>y_0$, $A(y)\leq -\f\lambda2$. Therefore, a direct computation shows, for any $0\leq \tau\leq y_0\leq y$, that
$$
e^{\int_{\tau}^yA(\tau_1)\dd \tau_1}=e^{\int_{\tau}^{y_0}A(\tau_1)\dd \tau_1+\int_{y_0}^{y}A(\tau_1)\dd \tau_1}\leq
e^{y_0|A|_{L^\infty_y}-\f{\lambda(y-y_0)}{2}}\leq e^{y_0(|A|_{L^\infty_y}+\f\lambda2)}e^{-\f{\lambda(y-\tau)}{2}}:=C_1e^{-\f{\lambda(y-\tau)}{2}}.
$$
Therefore, for $y>y_0$, it holds that
$$
\begin{aligned}
|z_1(y)|&\leq C_1e^{-\f{\lambda|y|}{2}}|z_{1,0}|+C_1\int_0^ye^{-\f{\lambda(y-\tau)}{2}}|r_1(\tau)|\dd \tau\\
&\leq C_{\varpi,\theta}e^{-\varpi |y|^{\theta}}|z_{1,0}|+C_1\sup_{\tau}|e^{\varpi|\tau|^{\theta}}r_1(\tau)|\int_0^ye^{-\f{\lambda(y-\tau)}{2}}e^{-\varpi |\tau|^{\theta}}\dd\tau\\
&\leq C_{\varpi,\theta}e^{-\varpi|y|^{\theta}}|z_{1,0}|+C_{\varpi,\theta}e^{-\varpi|y|^{\theta}}\sup_{\tau}|e^{\varpi|\tau|^{\theta}}r_1(\tau)|.
\end{aligned}
$$
In the last inequality above, we have used \eqref{5.9} and \eqref{5.10} from Lemma \ref{lmA.1} in the Appendix. Note that if $\theta=1$, our assumption $0< \varpi\leq\f{\lambda}{4}$ guarantees the inequality $e^{-\f{\lambda|y|}{2}}\leq e^{-\varpi|y|^{\theta}}$ and the integrability of $e^{-\f{\lambda(y-\tau)}{2}}e^{-\varpi |\tau|^{\theta}}$ on $\tau$. For $0\leq y\leq y_0$, it is direct to show that
\begin{align}\label{boundyLinfty}
|z_1(y)|&\leq (y_0+1)e^{y_0|A|_{L^\infty_y}}[|z_{1,0}|+\sup_{|\tau|\leq y_0}|r_1(\tau)|]\notag\\
&\leq(y_0+1)e^{y_0|A|_{L^\infty_y}+\varpi|y_0|^{\theta}}e^{-\varpi|y|^{\theta}}[|z_{1,0}|+\sup_\tau|e^{\varpi|\tau|^{\theta}}r_1(\tau)|]\notag\\
&\leq C_{\varpi,\theta}e^{-\varpi|y|^{\theta}}[|z_{1,0}|+\sup_\tau|e^{\varpi|\tau|^{\theta}}r_1(\tau)|].
\end{align}
We combine these two bounds to conclude that
\begin{align}\label{3.2.0.1}
|z_1(y)|\leq C_{\varpi,\theta}e^{-\varpi|y|^{\theta}}[|z_{1,0}|+|e^{\varpi|\cdot|^{\theta}}r_1|_{L^{\infty}_y}],
\end{align}
for any $y\geq 0$.
Since $\lim_{y\rightarrow-\infty}A(y)=\lambda>0$, a similar argument shows that \eqref{3.2.0.1} also holds for $y\leq 0$. This completes \eqref{3.2.0-1.2} for $N=0$. Similarly, the estimates on derivatives can be obtained inductively and hence omitted here for brevity.

 Next, we turn to the $L^2$ estimate \eqref{3.2.0-1.1}. When $0\leq y\leq y_0$, the boundedness leads to a similar structure of \eqref{boundyLinfty}. We directly have \eqref{3.2.0-1.1} for $k=0$ without giving the detailed proof. On the other hand, for $y>y_0$, by taking the inner product with $e^{2\varpi|y|^{\theta}}|z_1(y)|$ in \eqref{solutionz1}, we use H\"older's inequality, \eqref{5.9} and \eqref{5.10} to obtain 
 $$
 \begin{aligned}
 	|e^{\varpi|\cdot|^{\theta}}z_1|^2_{L^{2}_y}&\leq C\int^\infty_0e^{-\lambda|y|}e^{\varpi|y|^{\theta}}|z_{1,0}|^2\dd y+C\int^\infty_{y_0}\int_0^ye^{-\f{\lambda(y-\tau)}{2}}|r_1(\tau)|e^{2\varpi|y|^{\theta}}|z_1(y)|\dd \tau\dd y\notag\\
 	&\leq C_{\varpi,\theta}|z_{1,0}|^2+C\big(\int^\infty_{y_0}\int_0^ye^{-\f{\lambda(y-\tau)}{2}}|e^{\varpi|\tau|^{\theta}}r_1(\tau)|^2\dd \tau\dd y\big)^{1/2}\notag\\
 	&\qquad\qquad\qquad\qquad\times \big(\int^\infty_{y_0}\int_0^ye^{-\f{\lambda(y-\tau)}{2}}e^{-2\varpi|\tau|^{\theta}}e^{2\varpi|y|^{\theta}}|e^{\varpi|y|^{\theta}}z_1(y)|^2\dd \tau\dd y\big)^{1/2}\\
 	&\leq C_{\varpi,\theta}|z_{1,0}|^2+ C_{\varpi,\theta}\eta |e^{\varpi|\cdot|^{\theta}}z_1|^2_{L^{2}_y}+C_{\varpi,\theta,\eta}|e^{\varpi|\cdot|^{\theta}}r_1|_{L^2_y},
 \end{aligned}
 $$
for any $0<\eta<1$. Then, \eqref{3.2.0-1.1} holds by choosing $\eta$ sufficiently small. Hence, we get \eqref{3.2.0-1.1} for $N=0$ and the case $N>0$ can be proved inductively.
 
  Moreover, if $\theta=0,$ we have, for $y>0$, that
$$
\begin{aligned}
|z_{1}(y)|&\leq Ce^{-\f{\lambda|y|}{2}}|z_{1,0}|+C\int_0^ye^{-\f{\lambda(y-\tau)}{2}}|r_1(\tau)|\dd \tau\\
&\leq Ce^{-\f{\lambda|y|}{2}}|z_{1,0}|+C\int_0^ye^{-\f{\lambda\tau}{2}}|r_1(y-\tau)|\dd\tau\rightarrow 0\text{ as } y\rightarrow +\infty,
\end{aligned}
$$
by Lebesgue Convergence Theorem. Similarly, we can also show that $\lim_{y\rightarrow-\infty} z_1(y)=0$. The proof of Proposition \ref{prop3.1} is completed.
\end{proof}
Now we consider \eqref{L2}. Recall the velocity weight function $w_{q,\vartheta}(v)$ defined in \eqref{w}. The solvability of \eqref{L2} is given as follows.
\begin{proposition}\label{prop3.2}
Let $-3<\gamma\leq 1$ and $\vartheta>2$. There exist constants $\hat{q}_1>0$ and $\vep_1>0$, such that if $0<\vep\leq \vep_1$, $0\leq q\leq \hat{q}_1$, $$\lim_{y\rightarrow\pm\infty}\mathbf{r}_2(y,v)=\mathbf{0}, \text{ a.e for } v\in \mathbb{R}^3 \text{ and }
\|\nu^{-1/2}\mathbf{r}_2\|_{L^2}+\|\nu^{-1}w_{q,\vartheta}\mathbf{r}_2\|_{L^{\infty}}<\infty,$$ then \eqref{L2} admits a unique solution which satisfies
\begin{align}\label{3.2.0-2}
\vep^{-1/2}\|\nu^{1/2}\psi_1\|_{L^2}+\|w_{q,\vartheta}\psi_1\|_{L^{\infty}}\leq C\|\nu^{-1/2}\mathbf{r}_2\|_{L^2}+C\vep^{1/2}\|\nu^{-1}w_{q,\vartheta}\mathbf{r}_2\|_{L^{\infty}}.
\end{align}
Moreover, if $\mathbf{r}_2$ is continuous away from $\mathbb{R}\times\{v_1=s\}$, then $\psi_{1}$ is also continuous away from $\mathbb{R}\times\{v_1=s\}$.
\end{proposition}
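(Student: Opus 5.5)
We plan to solve \eqref{L2} by combining an $L^2$ energy estimate, which uses the coercivity \eqref{3.1.16} of $\mathbf{H}$, with a weighted $L^\infty$ estimate along accelerated backward characteristics. The two are closed in the usual $L^2$--$L^\infty$ interplay. Existence will come from an approximation scheme: we add a small damping $\kappa\psi_1$ and truncate to a bounded interval $|y|\le R$ with incoming-data zero, meaning zero at $y=-R$ for $v_1>s$ and zero at $y=R$ for $v_1<s$. For the approximate problem we obtain estimates uniform in $\kappa$ and $R$ and then pass to the limit. Uniqueness follows from the same a priori estimate with $\mathbf{r}_2=0$.

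\textbf{$L^2$ estimate.} We take the inner product of \eqref{L2} with $\psi_1$. The transport term $\frac12\pa_y\langle (v_1-s)\psi_1,\psi_1\rangle$ integrates to zero, or gives a favorable sign in the truncated problem. By \eqref{3.1.16} this yields
\[
\vep^{-1}c_1\|\nu^{1/2}\psi_1\|_{L^2}^2\le \vep^{-1/2}\|\nu^{-1/2}\mathbf{r}_2\|_{L^2}\|\nu^{1/2}\psi_1\|_{L^2},
\]
so $\vep^{-1/2}\|\nu^{1/2}\psi_1\|_{L^2}\lesssim\|\nu^{-1/2}\mathbf{r}_2\|_{L^2}$. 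This already gives the first part of \eqref{3.2.0-2}, and no $L^\infty$ information is needed here.

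\textbf{$L^\infty$ estimate.} We write $h=w_{q,\vartheta}\psi_1$ and use $\mathbf{H}=\nu-\b K$. Since $\nu\sim(1+|v|)^\gamma$ degenerates for $\gamma<0$, we follow \cite{DHWZ} and rescale the characteristic speed. We divide the equation by $\nu^{\alpha}$ for a suitable power, or equivalently use the parametrization $\dd y/\dd\tau=(v_1-s)\vep\,\nu^{-1}$. Along this accelerated characteristic the damping coefficient becomes uniformly $1$. The Duhamel formula then reads
\[
h(y,v)=\int_0^\infty e^{-\tau}\big[w\,\nu^{-1}\b K(w^{-1}h)+\vep^{1/2}w\nu^{-1}\mathbf{r}_2\big](Y(-\tau),v)\,\dd\tau .
\]
The source contributes $\vep^{1/2}\|\nu^{-1}w\mathbf{r}_2\|_{L^\infty}$. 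For $\b K=\b K_m+\b K_r$ we proceed as follows. The term $\b K_m$ is small by Lemma \ref{leK} once $m$ is small; here the factor $e^{-q|v|^2/4}$ absorbs $\nu^{-1}$ and the weight. The term $\b K_r$ is iterated a second time (double Duhamel). Using \eqref{2.1.10}--\eqref{2.1.11}, large $|v|$ or large $|\eta|$ give a factor $\nu^{-1}(1+|v|)^{-1}\to 0$, and for $\gamma<0$ one uses \eqref{2.1.10} with the $m^{\gamma-1}(1+|v|)^{\gamma-1}$ gain, which dominates $\nu^{-1}$ as long as $\gamma-1-\gamma=-1<0$. This yields small$\cdot\|h\|_{L^\infty}$. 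In the remaining bounded region $|v|,|\eta|,|\eta'|\le N$ we cut off the singular kernel and also remove a thin set where $|\eta_1-s|$ is small. On the rest we change variables $\eta_1\mapsto Y$, whose Jacobian is comparable to $\vep\,\nu^{-1}|\eta_1-s|\ge \vep\delta$, and bound the result by $C_{N,\delta}\vep^{-1/2}\|\nu^{1/2}\psi_1\|_{L^2}$. Combining with the $L^2$ estimate gives
\[
\|h\|_{L^\infty}\le \tfrac12\|h\|_{L^\infty}+C\|\nu^{-1/2}\mathbf{r}_2\|_{L^2}+C\vep^{1/2}\|\nu^{-1}w\mathbf{r}_2\|_{L^\infty},
\]
which is \eqref{3.2.0-2}.

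\textbf{Main obstacle.} We expect the hardest step to be the bounded-velocity part of the double iteration. The characteristic speed $(v_1-s)\vep/\nu$ vanishes at the grazing set $v_1=s$, so the change of variables from velocity to position degenerates there. This is also the reason $\vartheta>2$ is needed, to make the weight integrals converge uniformly. Tracking the powers of $\vep$ is delicate as well, since only $\vep^{-1/2}$ times the $L^2$ norm may appear.

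\textbf{Continuity and far-field limits.} Continuity away from $v_1=s$ follows because the approximating iterates are continuous there: the characteristics are smooth in $v$ whenever $v_1\neq s$. Uniform convergence in $L^\infty$ then preserves continuity. The limits $\psi_1\to0$ as $y\to\pm\infty$ follow from a dominated-convergence argument in the Duhamel formula, as in the $\theta=0$ case of Proposition \ref{prop3.1}.
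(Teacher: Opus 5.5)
\textbf{Verdict.} Your overall architecture matches the paper's. You use coercivity of $\mathbf{H}$ for the $L^2$ bound, accelerated characteristics with a double Duhamel iteration for $L^\infty$, and close via the $L^2$--$L^\infty$ interplay. However, the decisive step of the $L^\infty$ estimate, the change of variables in the bounded-velocity region, is incorrect as written.

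\textbf{The Jacobian is miscomputed.} With your acceleration $\mathrm{d}y/\mathrm{d}\tau=\vep(v_1-s)\nu^{-1}(v)$, the inner position is $Y=y_1-\tau'\vep(\eta_1-s)\nu^{-1}(\eta)$. The Jacobian of $\eta_1\mapsto Y$ is therefore
\[
\tau'\vep\,\partial_{\eta_1}\big[(\eta_1-s)\nu^{-1}(\eta)\big]=\tau'\vep\big[\nu^{-1}(\eta)+(\eta_1-s)\,\partial_{\eta_1}\nu^{-1}(\eta)\big],
\]
not $\vep\nu^{-1}|\eta_1-s|$. Three consequences follow.
\begin{itemize}
\item On the grazing set $\eta_1=s$ this Jacobian equals $\tau'\vep\nu^{-1}(\eta)>0$. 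So excising $|\eta_1-s|<\delta$ removes the wrong set. Vanishing of the speed at $v_1=s$ matters only for continuity, not for this step.
\item The real danger lies elsewhere. For $\gamma<0$, $\nu$ is a convolution of $|\cdot|^\gamma$ with Gaussians, hence radially nonincreasing, so $\nu^{-1}$ is radially nondecreasing. Then $(\eta_1-s)\partial_{\eta_1}\nu^{-1}(\eta)\le 0$ for $0\le\eta_1\le s$. Whether the bracket stays bounded below depends on $s$ relative to the logarithmic derivative of $\nu^{-1}$, which can fail for $|\gamma|$ near $3$ and moderate masses. If it vanishes, $\eta_1\mapsto Y$ is not monotone, and your bound by $C_{N,\delta}\vep^{-1/2}\|\nu^{1/2}\psi_1\|_{L^2}$ is unjustified.
\item You also need to cut off small $\tau'$, since the Jacobian carries the factor $\tau'$.
\end{itemize}

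\textbf{How the paper avoids this.} It does not divide by $\nu$ exactly. It uses the explicit speed $\hat v_1=(\varrho+|v|^2)^{|\gamma|/2}(v_1-s)$ with $\varrho=1+|\gamma|s^2/4$, chosen precisely so that
\[
\partial_{u_1}\hat u_1=(\varrho+|u|^2)^{\frac{|\gamma|}{2}-1}\big[1+|u|^2+|\gamma|(u_1-\tfrac{s}{2})^2\big]\ge c(1+|u|^2)^{\frac{|\gamma|}{2}}
\]
uniformly, including at $u_1=s$. Meanwhile $\hat\nu=(\varrho+|v|^2)^{|\gamma|/2}\nu\ge\hat\nu_0>0$ still gives uniform damping. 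Combined with restricting to $\tau-\tau_1\ge\vep/N$, this yields a Jacobian $\ge c\vep/N$.

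\textbf{Existence is also incomplete.} You never say how the damped, truncated problem is solved. Adding $\kappa\psi_1$ does not make $\vep^{-1}\bar K$ a contraction relative to $\vep^{-1}\nu$, so you would still need a continuity argument in a parameter multiplying $\bar K$. The paper does exactly this, directly on $\mathbb{R}$, which makes truncation unnecessary:
\begin{itemize}
\item it solves $(v_1-s)\partial_y\psi+\vep^{-1}\nu\psi=\vep^{-1}\delta\bar K\psi+\vep^{-1/2}\mathbf{r}_2$ explicitly at $\delta=0$;
\item it gets the $L^2$ bound uniformly in $\delta$ from $\nu-\delta\bar K=\delta\mathbf{H}+(1-\delta)\nu$;
\item it advances $\delta$ in steps of size $\vep^{1/2}/(2C)$ by contraction in the $\mathcal X$ norm.
\end{itemize}
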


Before going to the proof, we first show that the higher spatial regularity is given by the following corollary.

\begin{corollary}\label{cor3.2.5} Let $-3<\gamma\leq 1$, $\vartheta>2$, $0<\vep\leq \vep_1$, $0\leq q\leq \hat{q}_1$ and $N>0$ be an integer. If one has that for $k=0,\cdots N$, $\lim_{y\rightarrow \pm \infty}\pa^k_y\mathbf{r}_2(y,v)=\mathbf{0}\text{ a.e }v\in\mathbb{R}^3\text{ and }$
\begin{align}\label{3.2.0-6}
\sum_{k=0}^N\big\{\|\nu^{-1/2}\pa^k\mathbf{r}_2\|_{L^2}+\|\nu^{-1}w_{q,\vartheta}\pa^k\mathbf{r}_2\|_{L^{\infty}}\big\}<\infty,
\end{align}
then it holds that
\begin{align}\label{3.2.0-7}
\vep^{-1/2}\sum_{k=0}^N\|\nu^{1/2}\pa^k\psi_1\|_{L^2}+\|w_{q,\vartheta}\pa^k\psi_1\|_{L^{\infty}}
\leq C\sum_{k=0}^N\big\{\|\nu^{-1/2}\pa^k\mathbf{r}_2\|_{L^2}+\vep^{1/2}\|\nu^{-1}w_{q,\vartheta}\pa^k\mathbf{r}_2\|_{L^{\infty}}\big\}.
\end{align}
Moreover, for any $0\leq k\leq N$, if $\pa^k\mathbf{r}_2$ is continuous away from $\mathbb{R}\times \{v_1=s\},$ then $\pa^k\psi_1$ is also continuous away from $\mathbb{R}\times \{v_1=s\}$, for $0\leq k\leq N.$
\end{corollary}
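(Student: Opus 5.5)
The plan is to prove Corollary \ref{cor3.2.5} by induction on $k$, differentiating \eqref{L2} in $y$ and applying Proposition \ref{prop3.2} to each derivative. The operator $\mathbf{H}=\mathbf{L}_1+\mathbf{L}_2$ acts only in $v$ and has coefficients independent of $y$. Likewise $(v_1-s)$ is independent of $y$. Hence $\pa_y$ commutes with the whole left-hand side of \eqref{L2}. So, at least formally, $\pa^k_y\psi_1$ solves the same linear problem
\begin{equation*}
(v_1-s)\pa_y (\pa^k_y\psi_1)+\vep^{-1}\mathbf{H}\pa^k_y\psi_1=\vep^{-1/2}\pa^k_y\mathbf{r}_2,
\end{equation*}
with source $\pa^k_y\mathbf{r}_2$. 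The hypotheses give $\lim_{y\to\pm\infty}\pa^k_y\mathbf{r}_2=\mathbf{0}$ and the finiteness \eqref{3.2.0-6}. Proposition \ref{prop3.2} then yields a unique solution $\Psi_k$ of this problem satisfying \eqref{3.2.0-2} with $\mathbf{r}_2$ replaced by $\pa^k_y\mathbf{r}_2$. Summing over $k=0,\dots,N$ gives \eqref{3.2.0-7}, provided we know $\Psi_k=\pa^k_y\psi_1$.

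The main step is therefore the rigorous identification $\Psi_k=\pa_y^k\psi_1$, since a priori $\psi_1$ is only known to lie in the $L^2\cap L^\infty$ class and is not known to be differentiable. I would argue through the integrated form. Set $\Phi(y,v):=\int_0^y\Psi_{1}(\tau,v)\dd\tau$ and compare it with $\psi_1(y)-\psi_1(0)$.

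A cleaner route is to use difference quotients. Define $D_h\psi_1:=h^{-1}(\psi_1(\cdot+h)-\psi_1)$. By translation invariance and uniqueness in Proposition \ref{prop3.2}, $D_h\psi_1$ is the unique solution with source $D_h\mathbf{r}_2$. Linearity and the estimate \eqref{3.2.0-2} then give
\begin{equation*}
\|\nu^{1/2}(D_h\psi_1-\Psi_1)\|_{L^2}+\|w_{q,\vartheta}(D_h\psi_1-\Psi_1)\|_{L^\infty}\lesssim \|\nu^{-1/2}(D_h\mathbf{r}_2-\pa_y\mathbf{r}_2)\|_{L^2}+\|\nu^{-1}w_{q,\vartheta}(D_h\mathbf{r}_2-\pa_y\mathbf{r}_2)\|_{L^\infty}.
\end{equation*}
The $L^2$ term tends to $0$ as $h\to0$ by standard properties of difference quotients.

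For the $L^\infty$ term, uniform convergence of $D_h\mathbf{r}_2$ need not hold. I would instead use the $L^2$ convergence to identify $\Psi_1=\pa_y\psi_1$ in the distributional sense, which suffices. Since $\Psi_1$ already satisfies the bound, the weighted $L^\infty$ estimate transfers to $\pa_y\psi_1$. Iterating this argument in $k$ handles all derivatives up to order $N$. The existence part of Proposition \ref{prop3.2} is used at each stage to produce $\Psi_k$.

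Finally, the continuity statement follows by applying the last assertion of Proposition \ref{prop3.2} to $\Psi_k=\pa^k_y\psi_1$. Whenever $\pa_y^k\mathbf{r}_2$ is continuous away from $\mathbb{R}\times\{v_1=s\}$, so is $\Psi_k$. The expected main obstacle is the justification step above, that is, passing from the formally differentiated equation to the true derivative with only $L^2$–$L^\infty$ control. I expect the $L^2$ uniqueness from the coercivity \eqref{3.1.16} to resolve it.
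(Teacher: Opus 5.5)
Your argument is correct and is essentially the paper's proof. Both use difference quotients, which solve \eqref{L2} with source $D_h\pa_y^{k}\mathbf{r}_2$ by translation invariance, and Proposition \ref{prop3.2} controls them. The paper then lets $h\to0^+$ using only uniform-in-$h$ bounds, whereas you identify the limit with the solution $\Psi_k$ for source $\pa_y^k\mathbf{r}_2$; this gives strong $L^2$ convergence and makes the continuity assertion an immediate consequence of the last part of Proposition \ref{prop3.2}.

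One small precision: take the convergence $D_h\psi_1\to\Psi_1$ from the pure energy estimate \eqref{3.2.11} of Lemma \ref{lm3.2.2} (with $\delta=1$), not from the combined estimate \eqref{3.2.0-2}. The right-hand side of \eqref{3.2.0-2} contains $\|\nu^{-1}w_{q,\vartheta}(D_h\mathbf{r}_2-\pa_y\mathbf{r}_2)\|_{L^\infty}$, which is bounded but need not vanish. Lemma \ref{lm3.2.2} does apply to $D_h\psi_1-\Psi_1$, since for each fixed $h$ it has finite norms and vanishes at $\pm\infty$.
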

\begin{proof} The estimates on derivatives can be shown by a quite standard induction argument. Assume that the estimate \eqref{3.2.0-7} holds when the regularity is up to $N$-order. It turns to show the bounds of $(N+1)$-order spatial derivatives. Denote $D^h\pa^N\psi_1$ as the difference quotient, namely,
$$
D^h\pa^N\psi_1(y,v):=\f{\pa^N\psi_1(y+h,v)-\pa^N\psi_1(y,v)}{h}.
$$
It is clear that $D^h\pa^N\psi_1$ satisfies \eqref{L2} with $\mathbf{r}_2$ replaced by the different quotient $D^h\pa^N\mathbf{r}_2$. Utilizing Proposition \ref{prop3.2}, $D^h\pa^N\psi$ satisfies the uniform-in-$h$ estimate \eqref{3.2.0-2} with $\mathbf{r}_2$ replaced by $D^h\pa^N\mathbf{r}_2$. Due to \eqref{3.2.0-6}, the estimate on $\pa^{N+1}\psi_1$ directly follows from taking $h\rightarrow 0^+$. The continuity of spatial derivatives of $\psi_1$ can be shown by a similar induction argument. Therefore, the proof of Corollary \ref{cor3.2.5} is completed.
\end{proof}

The remaining of this part is devoted to prove Proposition \ref{prop3.2}. We introduce the following auxiliary problem:
\begin{equation}\label{3.2.2}
\left\{
\begin{aligned}
&(v_1-s)\pa_y \psi+\vep^{-1}\nu(v)\psi=\vep^{-1}\delta\b{K}\psi+\vep^{-1/2}\mathbf{r}_2,\\
&\lim_{y\rightarrow\pm\infty}\psi(y,v)=0,
\end{aligned}
\right.
\end{equation}
with $0\leq \delta\leq 1$, $\psi=[\psi_A,\psi_B]^T$, and $\mathbf{r}_2=[r_{2,A},r_{2,B}]^T$. Denote $\mathcal{S}^{-1}_{\delta}$ to be the solution operator of \eqref{3.2.2}. We first construct $\mathcal{S}^{-1}_0$,
then derive the a priori estimate on $\mathcal{S}^{-1}_{\delta}$ which is uniform in $\delta$, and finally construct the solution operator $\mathcal{S}^{-1}_1$ of our target problem \eqref{L2} by the bootstrap argument. 

\begin{lemma}\label{lm3.2.1}
Let $\delta=0$ in \eqref{3.2.2}, $-3<\gamma\leq 1$, $\vep>0$, $q>0$ and $\vartheta>2$. If $$\lim_{y\rightarrow \pm \infty}\mathbf{r}_2(y,v)=\mathbf{0}\text{ a.e }v\in\mathbb{R}^3\text{ and }\|\nu^{-1/2}\mathbf{r}_2\|_{L^2}+\|\nu^{-1}w_{q,\vartheta}\mathbf{r}_2\|_{L^{\infty}}<\infty,$$ then there exists a unique solution $\psi$ to \eqref{3.2.2} which satisfies
\begin{align}\label{3.2.8}
\|\nu^{1/2}\psi\|_{L^2}\leq C \vep^{1/2}\|\nu^{-1/2}\mathbf{r}_2\|_{L^2}\text{ and }\|w_{q,\vartheta}\psi\|_{L^{\infty}}\leq C\vep^{1/2}\|\nu^{-1}w_{q,\vartheta}\mathbf{r}_2\|_{L^{\infty}}.
\end{align}
Here, $C>0$ is a universal constant independent of $\vep$. Moreover, if $\mathbf{r}_2$ is continuous away from $\mathbb{R}\times\{v_1=s\}$, then $\psi$ is also continuous away from $\mathbb{R}\times\{v_1=s\}$.
\end{lemma}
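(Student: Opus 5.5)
When $\delta=0$ the problem \eqref{3.2.2} decouples in $v$. For each fixed $v$ with $v_1\neq s$ it is a scalar linear ODE in $y$:
\begin{align*}
(v_1-s)\pa_y\psi+\vep^{-1}\nu(v)\psi=\vep^{-1/2}\mathbf{r}_2 .
\end{align*}
Since $\nu(v)>0$ and we require decay at both ends, we integrate from the upstream side of the characteristic. For $v_1>s$ we integrate from $-\infty$:
\begin{align*}
\psi(y,v)=\vep^{-1/2}\int_{-\infty}^y \f{1}{v_1-s}\,e^{-\f{\nu(v)(y-\tau)}{\vep(v_1-s)}}\,\mathbf{r}_2(\tau,v)\,\dd\tau .
\end{align*}
For $v_1<s$ we integrate from $+\infty$, with the analogous formula. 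The set $\{v_1=s\}$ has measure zero, so this defines $\psi$ a.e. Uniqueness is immediate: the homogeneous equation has solutions $c(v)e^{-\nu y/(\vep(v_1-s))}$, which blow up at one end, so only $c=0$ is compatible with the far-field conditions.

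\textbf{Decay at infinity.} Substitute $\tau=y-\vep|v_1-s|t/\nu$; the formula becomes $\vep^{1/2}\nu^{-1}\int_0^\infty e^{-t}\mathbf{r}_2(y\mp\vep|v_1-s|t/\nu,v)\,\dd t$. Dominated convergence then gives $\psi\to0$ as $y\to\pm\infty$ for a.e.\ $v$, since $\mathbf{r}_2$ is bounded by $\nu w^{-1}_{q,\vartheta}\|\nu^{-1}w_{q,\vartheta}\mathbf{r}_2\|_{L^\infty}$ and tends to zero pointwise.

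\textbf{$L^\infty$ bound.} From the same rescaled formula, $|w_{q,\vartheta}\psi(y,v)|\le \vep^{1/2}\nu^{-1}w_{q,\vartheta}\sup_\tau|\mathbf{r}_2(\tau,v)|\int_0^\infty e^{-t}\dd t$. This gives the $L^\infty$ part of \eqref{3.2.8} with constant $1$, uniformly in $v$, and it holds even though $\nu$ degenerates at large $|v|$ when $\gamma<0$. That degeneracy is exactly why the norm on $\mathbf{r}_2$ carries the factor $\nu^{-1}$.

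\textbf{$L^2$ bound.} For fixed $v$, $\psi(\cdot,v)$ is the convolution of $\mathbf{r}_2(\cdot,v)$ with the kernel $\vep^{-1/2}|v_1-s|^{-1}e^{-\nu|y|/(\vep|v_1-s|)}$ restricted to a half-line. This kernel has $L^1_y$ norm $\vep^{1/2}\nu^{-1}$. By Young's inequality,
\begin{align*}
|\psi(\cdot,v)|_{L^2_y}\le\vep^{1/2}\nu^{-1}|\mathbf{r}_2(\cdot,v)|_{L^2_y}.
\end{align*}
Multiply by $\nu^{1/2}$ and integrate in $v$ to obtain $\|\nu^{1/2}\psi\|_{L^2}\le\vep^{1/2}\|\nu^{-1/2}\mathbf{r}_2\|_{L^2}$. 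Alternatively one can multiply the equation by $\psi$ and integrate, using that the boundary terms vanish; this gives $\vep^{-1}\|\nu^{1/2}\psi\|^2\le\vep^{-1/2}\|\nu^{-1/2}\mathbf{r}_2\|\,\|\nu^{1/2}\psi\|$. But the convolution argument avoids justifying the integration by parts.

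\textbf{Continuity.} Suppose $\mathbf{r}_2$ is continuous away from $\{v_1=s\}$, and fix a compact set in $\{v_1>s\}$ or in $\{v_1<s\}$. On it the rescaled integrand $e^{-t}\mathbf{r}_2(y\mp\vep|v_1-s|t/\nu(v),v)$ is continuous in $(y,v)$ and dominated by $Ce^{-t}$, because the weighted sup norm is finite and $\nu$ and $w_{q,\vartheta}$ are bounded there. Dominated convergence then gives continuity of $\psi$. The only step needing care is this continuity claim together with the near-grazing regime $v_1\to s$. There the kernel concentrates, but after the rescaling every bound is uniform in $v_1-s$. So I expect no genuine obstacle; the lemma is essentially explicit.
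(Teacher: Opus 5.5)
Your proposal is correct and follows essentially the same route as the paper. Both use the explicit characteristic formula \eqref{3.2.3}, bound $L^\infty$ by the kernel mass $\vep(v_1-s)\nu^{-1}$, and get continuity from the formula; your Young's-inequality $L^2$ bound is the paper's Cauchy--Schwarz/Fubini computation written compactly, and your explicit uniqueness and far-field decay arguments (via the rescaled integral and dominated convergence) fill in steps the paper leaves implicit.
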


\begin{proof}
Firstly, the solution $\psi=[\psi_A,\psi_B]^T$ to \eqref{3.2.2} can be solved explicitly by
\begin{equation}\label{3.2.3}
\psi_{i}(y,v)=\left\{
\begin{aligned}
\vep^{-1/2}(v_1-s)^{-1}\int_{-\infty}^{y}e^{-\vep^{-1}\nu_{i}(v)(v_1-s)^{-1}(y-y')}r_{2,i}(y',v)\dd y',\quad &v_1-s>0,\\
-\vep^{-1/2}(v_1-s)^{-1}\int_{y}^{\infty}e^{-\vep^{-1}\nu_{i}(v)(v_1-s)^{-1}(y-y')}r_{2,i}(y',v)\dd y',\quad &v_1-s<0,
\end{aligned}
\right.
\end{equation}
where $i\in\{A,B\}$. To show \eqref{3.2.8}, we consider the weighted $L^2$-norm $\|\nu^{1/2}\psi\|_{L^2}$, for which the $v-$integration domain is split into two parts: $\{v_1-s>0\}\cup\{v_1-s<0\}$. For the first region, we have
\begin{align}\label{3.2.3-1}
\int_{-\infty}^ye^{-\vep^{-1}\nu_{i}(v)(v_1-s)^{-1}(y-y')}\dd y'\leq \vep(v_1-s)\nu_{i}^{-1}(v).
\end{align}
Thus, by H\"older's inequality, the $L^2$ norm of $\nu^{1/2}_i \psi_{i}$ over $\mathbb{R}\times\{v_1-s>0\}$ can be bounded by
$$
\begin{aligned}
&\int_{\mathbb{R}}\int_{\{v_1-s>0\}}(v_1-s)^{-1}\nu_i(v) \psi_{i}^2(y,v)\dd y\dd v\notag\\
&\leq \int_{\mathbb{R}}\dd y\int_{\{v_1-s>0\}}(v_1-s)^{-1}\dd v
\int_{-\infty}^ye^{-\vep^{-1}\nu_{i}(v)(v_1-s)^{-1}(y-y')}r_{2,i}^2(y',v)\dd y'\\
&\leq \int_{\mathbb{R}}\dd y'\int_{\{v_1-s>0\}}(v_1-s)^{-1}r_{2,i}^2(y',v)\dd v\int_{y'}^{+\infty}
e^{-\vep^{-1}\nu_{i}(v)(v_1-s)^{-1}(y-y')}\dd y\\
&\leq C\vep\|\nu^{-1/2}\mathbf{r}_2\|_{L^2}^2.
\end{aligned}
$$
Similarly, the integral of $\nu_i\psi_{i}^2$ over $\mathbb{R}\times\{v_1-s<0\}$ is bounded by
$C\vep \|\nu^{-1/2}\mathbf{r}_{2}\|_{L^2}^2.
$
This completes the weighted $L^2$-estimates in \eqref{3.2.8}. The $L^{\infty}$-estimate can be directly obtained from \eqref{3.2.3-1} and \eqref{3.2.3}. Finally, from the explicit formula \eqref{3.2.3}, it is direct to show that $\psi$ is continuous away from $\mathbb{R}\times\{v_1=s\}$, provided that $\mathbf{r}_2$ is continuous away from $\mathbb{R}\times\{v_1=s\}$. Therefore, the proof of Lemma \ref{lm3.2.1} is completed.
\end{proof}

Next, we derive the uniform-in-$\delta$ estimates on the solution to \eqref{3.2.2}. Firstly, we have the following uniform $L^2$-estimate.

\begin{lemma}\label{lm3.2.2}
Let $-3<\gamma\leq 1$, $0\leq \delta\leq 1$, $\vep>0,$ $q\geq 0$ and $\vartheta>2$. If $\psi=\psi(y,v)$ is a solution to \eqref{3.2.2} and satisfies
\begin{align}\label{3.2.10.2}
\|\nu^{1/2}\psi\|_{L^2}+\|w_{q,\vartheta}\psi\|_{L^{\infty}}<\infty,
\end{align}
then it holds that
\begin{align}\label{3.2.11}
 \|\nu^{1/2}\psi\|_{L^2}\leq C\vep^{1/2} \|\nu^{-1/2}\mathbf{r}_2\|_{L^2},
\end{align}
with a positive constant $C$ independent of $\vep$ and $\delta.$
\end{lemma}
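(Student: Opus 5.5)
The plan is a standard energy estimate: take the $L^2_{y,v}$ inner product of \eqref{3.2.2} with $\psi$ and exploit the coercivity \eqref{3.1.16} of $\mathbf{H}=\nu-\b{K}$. Writing the left side of \eqref{3.2.2} as
\[
(v_1-s)\pa_y\psi+\vep^{-1}\big[(1-\delta)\nu+\delta\mathbf{H}\big]\psi=\vep^{-1/2}\mathbf{r}_2,
\]
we note that for every $0\le\delta\le1$ the operator $(1-\delta)\nu+\delta\mathbf{H}$ satisfies
\[
\langle[(1-\delta)\nu+\delta\mathbf{H}]\psi,\psi\rangle\ge(1-\delta)|\nu^{1/2}\psi|_{L^2_v}^2+\delta c_1|\nu^{1/2}\psi|_{L^2_v}^2\ge\min\{1,c_1\}|\nu^{1/2}\psi|^2_{L^2_v},
\]
uniformly in $\delta$.

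Integrating over $y\in\mathbb{R}$, the transport term gives $\frac12\int_{\mathbb{R}}\pa_y\langle(v_1-s)\psi,\psi\rangle\,\dd y$. This equals $\frac12[\langle(v_1-s)\psi,\psi\rangle]_{y=-\infty}^{y=+\infty}$, which should vanish because $\psi\to0$ as $y\to\pm\infty$. The Cauchy--Schwarz inequality applied to the source term, $\vep^{-1/2}\langle\mathbf{r}_2,\psi\rangle\le\vep^{-1/2}\|\nu^{-1/2}\mathbf{r}_2\|_{L^2}\|\nu^{1/2}\psi\|_{L^2}$, then yields
\[
\vep^{-1}\min\{1,c_1\}\|\nu^{1/2}\psi\|_{L^2}^2\le\vep^{-1/2}\|\nu^{-1/2}\mathbf{r}_2\|_{L^2}\|\nu^{1/2}\psi\|_{L^2},
\]
and hence \eqref{3.2.11}, with a constant independent of $\vep$ and $\delta$.

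The main obstacle is justifying the vanishing of the boundary term and the integration by parts. We only know $\nu^{1/2}\psi\in L^2$ and $w_{q,\vartheta}\psi\in L^\infty$ from \eqref{3.2.10.2}, and the factor $(v_1-s)$ is unbounded while $\nu^{1/2}$ may decay for soft potentials. We would handle this by the weighted $L^\infty$ bound: since $\vartheta>2$, $|v_1-s||\psi|^2\lesssim(1+|v|)^{1-4\vartheta}\|w_{q,\vartheta}\psi\|_{L^\infty}^2$ is integrable in $v$. It follows that $g(y):=\langle(v_1-s)\psi,\psi\rangle$ is bounded, and $g(y)\to0$ as $y\to\pm\infty$ by dominated convergence together with the pointwise limit $\psi\to0$.

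To avoid differentiating in $y$, we would integrate the equation over $y\in[-R,R]$ in the mild or distributional sense (or mollify in $y$), obtaining the identity $\frac12[g(R)-g(-R)]+\dots$, and then let $R\to\infty$. The remaining terms are finite because $\langle\nu\psi,\psi\rangle$ and $\langle\b{K}\psi,\psi\rangle\lesssim|\nu^{1/2}\psi|^2$ are integrable in $y$ by \eqref{3.2.10.2} and the kernel bounds for $\b{K}$ (the analogue of \eqref{2.1.10}), and $\langle\mathbf{r}_2,\psi\rangle$ is integrable by Cauchy--Schwarz.
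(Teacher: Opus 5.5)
Your proposal is correct and follows essentially the same argument as the paper. Both pair \eqref{3.2.2} with $\psi$ over $\mathbb{R}\times\mathbb{R}^3$, write $\nu-\delta\bar{K}=\delta\mathbf{H}+(1-\delta)\nu$ to apply \eqref{3.1.16} uniformly in $\delta$, use Cauchy--Schwarz on the source term, and kill the boundary terms using $\vartheta>2$, the pointwise limit $\psi\to0$, and dominated convergence. Your remark about integrating over $[-R,R]$ or mollifying in $y$ makes explicit a justification of the integration by parts that the paper leaves implicit.
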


\begin{proof} 
Taking the inner product of \eqref{3.2.2} with $\psi$ over $\mathbb{R}\times \mathbb{R}^3$, we obtain
\begin{align}\label{3.2.12}
\int_{\mathbb{R}}\langle(v_1-s)\pa_y\psi,\psi\rangle\dd y+\vep^{-1}\int_{\mathbb{R}}\big[\delta\langle \mathbf{H}\psi,\psi\rangle+(1-\delta)\langle\nu \psi,\psi\rangle\big]\dd y=\vep^{-1/2}\int_{\mathbb{R}}\langle \mathbf{r}_2,\psi\rangle\dd y.
\end{align}
Here, we have used the fact that $\nu-\delta\b{K}=\delta \mathbf{H}+(1-\delta)\nu$. By Cauchy-Schwarz inequality, the right hand side of \eqref{3.2.12} can be bounded by
\begin{align}\label{r2psi}
\vep^{-1/2}\int_{\mathbb{R}}|\langle \mathbf{r}_2,\psi\rangle|\dd y\leq \eta\vep^{-1} \|\nu^{1/2}\psi\|_{L^2}^2+C_{\eta}\|\nu^{-1/2}\mathbf{r}_2\|_{L^2}^2,
\end{align}
where $\eta>0$ can be arbitrarily small. By the positivity of $\mathbf{H}$ in \eqref{3.1.16}, for some $c_1,\nu_0>0,$ the second term on the left hand side of \eqref{3.2.12} has a lower bound as
\begin{align}\label{Kpsi}
\vep^{-1}\int_{\mathbb{R}}\big[\delta\langle \mathbf{H}\psi,\psi\rangle+(1-\delta)\langle\nu \psi,\psi\rangle\big]\dd y\geq\vep^{-1}[\delta c_1+(1-\delta)\nu_0]\|\nu^{1/2}\psi\|_{L^2}^2.
\end{align}
The first term on the left hand side of \eqref{3.2.12} can be computed as
\begin{align}\label{psipsi}
	\int_{\mathbb{R}}\langle(v_1-s)\pa_y\psi,\psi\rangle\dd y=\sum_{i=A,B}\int_{\mathbb{R}}\int_{\mathbb{R}^3}\pa_y\f{(v_1-s)\psi_i^2}{2}\dd v\dd y.
	\end{align}
Recalling the bounds  in \eqref{3.2.10.2} and  the fact that $\lim_{y\rightarrow\pm\infty}\psi_{i}(y,v)=0$, we have
$$
|(v_1-s)\psi^2_i(y,v)|\leq C|\psi_i(y,v)|\cdot\|w_{q,\vartheta}\psi\|_{L^{\infty}}\rightarrow 0,\quad \text{as }y\rightarrow\pm\infty, $$
and
$$\bigg|\int_{\mathbb{R}^3}\f{(v_1-s)\psi_i^2}{2}\dd v\bigg|\leq C\|w_{q,\vartheta}\psi\|_{L^\infty}^2\cdot\int_{\mathbb{R}^3}(1+|v|)^{-4}\dd v<\infty.
$$
Then, by Lebesgue Convergence Theorem, we have
\begin{align}\label{LCT}\sum_{i=A,B}\int_{\mathbb{R}}\int_{\mathbb{R}^3}\pa_y\f{(v_1-s)\psi_i^2}{2}\dd v\dd y=\sum_{i=A,B}\lim_{y\rightarrow \pm\infty}\int_{\mathbb{R}^3}\f{(v_1-s)\psi_i^2(y,v)}{2}\dd v=0.
\end{align}
Hence, $\eqref{3.2.11}$ holds by collecting \eqref{3.2.12}, \eqref{r2psi}, \eqref{Kpsi}, \eqref{psipsi} and \eqref{LCT}, and taking $\eta>0$ suitably small. The proof of Lemma \ref{lm3.2.2} is completed.
\end{proof}

Now, we are in the position to derive the crucial $L^\infty$-estimate. To do this, we define two kinds of 1-d characteristics serving to models with
hard $(0\leq \gamma\leq 1)$ and soft $(-3< \gamma <0)$ interactions respectively.
\begin{definition}(Characteristics for $0\leq \gamma\leq 1$)
For any $(t,y,v)\in [0,\infty)\times \mathbb{R}\times \mathbb{R}^3$, define $[Y^H(\tau;t,y,v),V^{H}(\tau;t,y,v)]$ as the solution to the following ODE
\begin{align}\notag
\begin{cases}
\frac{\dd Y^H(\tau)}{\dd\tau}=V_1^H(\tau)-s,\\[2mm]
\frac{\dd V^H(\tau)}{\dd\tau}=0,\\[2mm]
[Y^H(t),V^H(t)]=[y,v],
\end{cases}
\end{align}
that is,
\begin{align}\notag
[{Y}^H(\tau;t,y,v),V^H(\tau;t,y,v)]=[y-\tilde{v}_1(t-\tau),v] \quad\mbox{where}\quad \tilde{v}_1:=(v_1-s).
\end{align}
\end{definition}
\begin{definition} (Characteristics for $-3<\gamma<0$) For any $(t,y,v)\in [0,\infty)\times \mathbb{R}\times \mathbb{R}^3$, define $[Y^S(\tau;t,y,v),V^{S}(\tau;t,y,v)]$ as the solution to the following ODE
\begin{align}\notag
\begin{cases}
\frac{\dd Y^S(\tau)}{\dd\tau}=(\varrho+|V^S(\tau)|^2)^{\frac{|\gamma|}{2}}(V_1^S(\tau)-s):=\hat{V}_1^S(\tau),\\[2mm]
\frac{\dd V^S(\tau)}{\dd\tau}=0,\\[2mm]
[Y^S(t),V^S(t)]=[y,v],
\end{cases}
\end{align}
that is,
\begin{align}\notag
[{Y}^S(\tau;t,y,v),V^S(\tau;t,y,v)]=[y-\hat{v}_1(t-\tau),v],\quad\mbox{with}\quad \hat{v}_1:=(\varrho+|v|^2)^{\f{|\gamma|}{2}}(v_1-s).
\end{align}
Here $\varrho>1$ is a constant to be chosen later.
\end{definition}

Along the 1-d characteristics above, we can define the mild formulation of \eqref{3.2.2} which enables us to get $L^{\infty}$-estimate. Let $h_{i}:=w_{q,\vartheta}\psi_{i}$, $i\in \{A,B\}$, where  $w_{q,\vartheta}$ is defined in \eqref{w}. Then, the equation for $\mathbf{h}=[h_{A}, h_{B}]^T$ reads as
\begin{align}\label{3.2.13}
(v_1-s)\pa_y\mathbf{h}+\vep^{-1}\nu\mathbf{h}=\vep^{-1}\delta w_{q,\vartheta}\b{K}\left(\f{\mathbf{h}}{w_{q,\vartheta}}\right)+\vep^{-1/2}w_{q,\vartheta}\mathbf{r}_2, \quad y\in \mathbb{R}, v\in \mathbb{R}^3.
\end{align}

\begin{lemma}
Let $(t,x,v)\in [0,\infty)\times \mathbb{R}\times \mathbb{R}^3$ and $i\in\{A,B\}$. For $0\leq \gamma\leq 1$, we have
\begin{align}\label{3.2.14}
h_i(y,v)=&e^{-\vep^{-1}\nu_{i}(v)t}h_{i}(y-\tilde{v}_1t,v)+\vep^{-1/2}\int_0^te^{-\vep^{-1}\nu_{i}(v)(t-\tau)} w_{q,\vartheta}r_{2,i}
(Y^H(\tau),V^H(\tau))\dd
\tau\nonumber\\
&+\vep^{-1}\delta\int_0^te^{-\vep^{-1}\nu_{i}(v)(t-\tau)} w_{q,\vartheta}\b{K}_m\left(\f{h_{i}}{w_{q,\vartheta}}\right)
(Y^H(\tau),V^H(\tau))\dd
\tau\nonumber\\
&+\vep^{-1}\delta\sum_{j=A,B}\int_0^t\int_{\mathbb{R}^3}e^{-\vep^{-1}\nu_{i}(v)(t-\tau)} w_{q,\vartheta}\tilde{K}_r^{ji}(v,u)h_j
(Y^H(\tau),u)\dd u\dd
\tau.
\end{align}
And for $-3<\gamma<0$, denoting $\hat{\nu}_{i}(v):=(\varrho+|v|^2)^{\f{|\gamma|}{2}}\nu_{i}(v)$, it holds that
\begin{align}\label{3.2.15}
h_i(y,v)=&e^{-\vep^{-1}\hat{\nu}_{i}(v)t}h_{i}(y-\hat{v}_1t,v)\nonumber\\
&+\vep^{-1}\delta\int_0^te^{-\vep^{-1}\hat{\nu}_{i}(v)(t-\tau)} w_{q,\vartheta}(\varrho+|v|^{2})^{\f{|\gamma|}{2}} \b{K}_m\left(\f{h_{i}}{w_{q,\vartheta}}\right)
(Y^S(\tau),V^S(\tau))\dd
\tau\nonumber\\
&+\vep^{-1}\delta\sum_{j=A,B}\int_0^t\int_{\mathbb{R}^3}e^{-\vep^{-1}\hat{\nu}_{i}(v)(t-\tau)} (\varrho+|v|^{2})^{\f{|\gamma|}{2}}\tilde{k}_r^{ji}(v,u)h_j
(Y^S(\tau),u)\dd u\dd
\tau.\nonumber\\
&+\vep^{-1/2}\int_0^te^{-\vep^{-1}\hat{\nu}_{i}(v)(t-\tau)}(\varrho+|v|^{2})^{\f{|\gamma|}{2}} w_{q,\vartheta}r_{2,i}(Y^S(\tau),V^S(\tau))\dd
\tau:=\sum_{l=1}^4I_l.
\end{align}
In \eqref{3.2.14} and \eqref{3.2.15}, we have denoted
$$
\tilde{k}_r^{ji}(v,u)=\b{k}_r^{ji}(v,u)\f{w_{q,\vartheta}(v)}{w_{q,\vartheta}(u)}.
$$
\end{lemma}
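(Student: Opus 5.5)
The plan is to derive both formulas by integrating the weighted equation \eqref{3.2.13} along the characteristics just defined. First I split $\b{K}=\b{K}_m+\b{K}_r$ (property 2 of $\mathbf{H}$) and write $\b{K}_r$ through its kernel $\b{k}_r^{ji}$. Using $w_{q,\vartheta}(v)\b{k}^{ji}_r(v,u)\psi_j(u)=\tilde{k}^{ji}_r(v,u)h_j(u)$ with $h_j=w_{q,\vartheta}\psi_j$, the $\b{K}_r$ term in \eqref{3.2.13} becomes $\vep^{-1}\delta\sum_j\int\tilde k_r^{ji}(v,u)h_j(y,u)\,\dd u$. Since $\b{K}_m$ is left in operator form, it stays as written. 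The $i$-th component then reads
\begin{align*}
(v_1-s)\pa_y h_i+\vep^{-1}\nu_i(v)h_i=\mathcal{R}_i(y,v),
\end{align*}
where $\mathcal{R}_i$ collects the $\b{K}_m$, $\b{K}_r$ and $\mathbf{r}_2$ terms. Because $v$ is frozen along both families of characteristics, this is a scalar linear ODE along each line.

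For $0\le\gamma\le1$, I fix $(y,v)$ and $t\ge0$, and set $Y^H(\tau)=y-\tilde v_1(t-\tau)$. The chain rule gives $\frac{\dd}{\dd\tau}h_i(Y^H(\tau),v)=(v_1-s)\pa_yh_i(Y^H(\tau),v)$. Hence
\begin{align*}
\frac{\dd}{\dd\tau}\Big[e^{\vep^{-1}\nu_i(v)\tau}h_i(Y^H(\tau),v)\Big]=e^{\vep^{-1}\nu_i(v)\tau}\mathcal{R}_i(Y^H(\tau),v).
\end{align*}
Integrating over $\tau\in[0,t]$ and multiplying by $e^{-\vep^{-1}\nu_i(v)t}$ gives \eqref{3.2.14} directly. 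The boundary term at $\tau=0$ is $e^{-\vep^{-1}\nu_i t}h_i(y-\tilde v_1t,v)$, and the three source contributions appear with the kernel $e^{-\vep^{-1}\nu_i(v)(t-\tau)}$.

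For $-3<\gamma<0$, I first multiply the component equation by the positive factor $(\varrho+|v|^2)^{|\gamma|/2}$. This gives
\begin{align*}
\hat v_1\pa_yh_i+\vep^{-1}\hat\nu_i(v)h_i=(\varrho+|v|^2)^{\f{|\gamma|}2}\mathcal{R}_i.
\end{align*}
Along $Y^S(\tau)=y-\hat v_1(t-\tau)$ we have $\frac{\dd}{\dd\tau}h_i(Y^S(\tau),v)=\hat v_1\pa_yh_i$. Repeating the integrating-factor computation with $\hat\nu_i$ in place of $\nu_i$ yields the four terms $I_1,\dots,I_4$ of \eqref{3.2.15}, each carrying the accelerating factor $(\varrho+|v|^2)^{|\gamma|/2}$.

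The only point needing care is justification, since the characteristic formula requires $h_i$ to be absolutely continuous in $y$ along the lines. I would interpret the solution in the mild (distributional in $y$ for a.e.\ $v$) sense. In that setting, the identity $\frac{\dd}{\dd\tau}h_i(Y(\tau),v)=\dot Y\pa_yh_i$ holds for a.e.\ $v$, because the equation gives $\pa_yh_i\in L^1_{loc}$ in $y$ whenever $v_1\ne s$, and the set $\{v_1=s\}$ has measure zero. Degenerate lines with $v_1=s$ are trivial: there $Y$ is constant and the formula reduces to the algebraic equation. No step is a real obstacle; the identity is exact and the estimates come later.
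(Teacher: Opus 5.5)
Your derivation is correct and is the same approach as the paper's: the paper states this lemma without proof, treating it as the Duhamel (integrating-factor) formula along $Y^H$ and $Y^S$ (for the soft case, after multiplying the $i$-th component by $(\varrho+|v|^2)^{|\gamma|/2}$), which is exactly what you carry out. Your computation gives the $\bar{K}_r$ contribution in \eqref{3.2.14} as $\vep^{-1}\delta\sum_{j}\int_0^t\int_{\mathbb{R}^3}e^{-\vep^{-1}\nu_i(v)(t-\tau)}\tilde{k}_r^{ji}(v,u)h_j(Y^H(\tau),u)\,\dd u\,\dd\tau$; this is the correct form, and the extra factor $w_{q,\vartheta}$ written in front of $\tilde{K}_r^{ji}$ there is a typo in the paper.
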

\begin{lemma}\label{lm3.2.5}
Let $-3<\gamma \leq 1,$ $0\leq q\leq q_1$, where $q_1$ is defined in Lemma \ref{lmK}, 
and $\theta>2$. 
Assume that $\mathbf{h}=\mathbf{h}(y,v)\in L^{\infty}\times L^{\infty} $ is a solution to \eqref{3.2.13}. Then there exists $\vep_2>0$, such that for any $0<\vep\leq \vep_2$,
\begin{align}\label{3.2.16}
\|\mathbf{h}\|_{L^{\infty}}\leq C\vep^{1/2}\|\nu^{-1}w_{q,\vartheta}\mathbf{r}_2\|_{L^{\infty}}+C\|\nu^{-1/2}\mathbf{r}_2\|_{L^2}.
\end{align}
Here, the constant $C>0$ is independent of $\vep$.
\end{lemma}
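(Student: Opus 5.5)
We will prove the $L^\infty$ bound \eqref{3.2.16} by the standard $L^2$--$L^\infty$ interplay applied to the mild formulations \eqref{3.2.14} (hard case) and \eqref{3.2.15} (soft case). The role of the acceleration is that in the soft case the effective frequency $\hat\nu_i(v)=(\varrho+|v|^2)^{|\gamma|/2}\nu_i(v)$ satisfies $\hat\nu_i\geq \nu_0>0$ uniformly in $v$ by Lemma \ref{leL}. Thus both cases share a uniformly positive damping rate $\vep^{-1}\hat\nu$ (resp.\ $\vep^{-1}\nu$).

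\textbf{Step 1: first pass.} Since $\mathbf{h}\in L^\infty$, we send $t\to\infty$, so the first term $I_1$ vanishes; this replaces any boundary condition, because the problem is posed on all of $\mathbb{R}_y$. The source term is bounded using $\int_0^\infty e^{-\vep^{-1}\hat\nu(t-\tau)}\,d\tau=\vep\hat\nu^{-1}$:
\[
|I_4|\lesssim \vep^{1/2}\|\nu^{-1}w_{q,\vartheta}\mathbf{r}_2\|_{L^\infty}.
\]
The $\b K_m$ term contributes $Cm^{3+\gamma}\|\mathbf{h}\|_{L^\infty}$ by Lemma \ref{leK}. In the soft case the extra factor $(\varrho+|v|^2)^{|\gamma|/2}$ is absorbed by the Gaussian decay $e^{-q|v|^2/4}$ (after possibly shrinking $q$ in the weight), and the remaining factor is $\hat\nu^{-1}(\varrho+|v|^2)^{|\gamma|/2}\lesssim \nu^{-1}$. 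For the $\b K_r$ term, \eqref{2.1.11} gives a bound $C(1+|v|)^{-1}(\varrho+|v|^2)^{|\gamma|/2}\hat\nu^{-1}\|\mathbf{h}\|_{L^\infty}\lesssim(1+|v|)^{-1}\nu^{-1}\|\mathbf{h}\|_{L^\infty}$. A cleaner form uses \eqref{2.1.10}, whose factor $(1+|v|)^{\gamma-1}$ exactly cancels $\nu^{-1}\sim(1+|v|)^{-\gamma}$ up to $(1+|v|)^{-1}$. This term is small for $|v|\geq N$, so only the region $|v|\leq N$ remains.

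\textbf{Step 2: iteration.} On $|v|\le N$ we substitute the mild formulation a second time for $h_j(Y(\tau),u)$, now along the characteristic in $u$. This produces a double integral in $(\tau,\tau')$ with kernel $\tilde k_r(v,u)\tilde k_r(u,u')$ and evaluation point $h(Y(\tau)-\hat u_1(\tau-\tau'),u')$. We split it as follows:
\begin{itemize}
\item the pieces $|u|\ge 2N$ or $|u'|\ge 3N$ are small by the kernel decay;
\item the piece $\tau-\tau'\le \kappa\vep$ is small because it carries a factor $\kappa$, which comes from the exponential damping at rate $\vep^{-1}$;
\item we truncate the kernels to bounded smooth ones, which costs a factor $o(1)$, using the $|v-\eta|^{-1}$ integrability encoded in \eqref{2.1.8}.
\end{itemize}
On the main piece we change variables $u_1\mapsto Y'=Y(\tau)-\hat u_1(\tau-\tau')$. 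The Jacobian is $|\partial_{u_1}\hat u_1|(\tau-\tau')\gtrsim \kappa\vep$; in the soft case we check that $\partial_{u_1}\hat u_1$ is bounded below on $|u|\le 2N$, which holds since $\varrho>1$ is chosen large.

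\textbf{Step 3: conversion to $L^2$ and closing.} After Cauchy--Schwarz in $(Y',u')$, the change of variables gives a bound of order
\[
C_{N,\kappa}\,\vep^{-1/2}\|\nu^{1/2}\psi\|_{L^2}.
\]
The prefactor $\vep^{-1}\cdot\vep$ from the two time integrals cancels, and a careful count of $\vep$ factors from the Jacobian gives $(\kappa\vep)^{-1/2}$. We then invoke Lemma \ref{lm3.2.2}, which gives $\|\nu^{1/2}\psi\|_{L^2}\le C\vep^{1/2}\|\nu^{-1/2}\mathbf{r}_2\|_{L^2}$. This yields the term $C\|\nu^{-1/2}\mathbf{r}_2\|_{L^2}$ in \eqref{3.2.16}, free of powers of $\vep$. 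Collecting all pieces, we obtain
\[
\|\mathbf{h}\|_{L^\infty}\le \bigl(Cm^{3+\gamma}+C/N+C\kappa+o(1)\bigr)\|\mathbf{h}\|_{L^\infty}+\text{RHS}.
\]
Choosing $m,\kappa$ small and $N$ large, and using $\delta\le1$ so that all constants are uniform in $\delta$, we absorb the first term and conclude.

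\textbf{Main obstacle.} The hardest step is the change of variables near the grazing set $v_1=s$ in the soft case, where $\hat u_1$ is nonlinear in $u$. There we need a quantitative lower bound on $\partial_{u_1}\hat u_1=(\varrho+|u|^2)^{|\gamma|/2-1}\bigl(\varrho+|u|^2+|\gamma|u_1(u_1-s)\bigr)$ on the bounded region. This is guaranteed by taking $\varrho$ large relative to $N^2$, but $N$ is itself chosen in the absorption step. The order of constants must therefore be fixed as follows: first $N$ and $m$ from the kernel estimates, then $\varrho=\varrho(N)$, with the $L^2$-conversion constant allowed to depend on all of them.
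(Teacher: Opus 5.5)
Your proposal follows essentially the same route as the paper: the mild formulation along the accelerated characteristics, a second Duhamel substitution in the $\b{K}_r$ term, the split into $|v|>N$, $|u|>2N$, $|u'|>3N$, a short time layer of length $O(\vep/N)$, and a main part handled by smooth kernel approximation, Cauchy--Schwarz and the change of variables $u_1\mapsto Y'$, followed by Lemma \ref{lm3.2.2} to turn $C\vep^{-1/2}\|\nu^{1/2}\psi\|_{L^2}$ into $C\|\nu^{-1/2}\mathbf{r}_2\|_{L^2}$. The coupling $\varrho=\varrho(N)$ you worry about is unnecessary: since $u_1(u_1-s)\geq -s^2/4$, the paper's fixed choice $\varrho=1+|\gamma|s^2/4$ gives $\varrho+|u|^2+|\gamma|u_1(u_1-s)\geq 1+|u|^2$, hence $\partial_{u_1}\hat u_1\gtrsim (1+|u|^2)^{|\gamma|/2}$ for all $u$, independently of $N$.
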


\begin{proof}
Consider $-3<\gamma<0$. Notice that $\hat{\nu}_i(v)=(\varrho+|v|^2)^{\f{|\gamma|}{2}}\nu_{i}(v)\geq \hat{\nu}_0>0$ for some positive constant $\hat{\nu}_0$. Then, for $I_i$ defined in \eqref{3.2.15}, it holds that
\begin{align}\label{3.2.16-1}
|I_1|\leq e^{-\vep^{-1}\hat{\nu}_0t}\|\mathbf{h}\|_{L^{\infty}},
\end{align}
and
\begin{align}\label{3.2.16-2}
|I_4|\leq C\vep^{1/2}\|\nu^{-1}w_{q,\vartheta}\mathbf{r}_2\|_{L^{\infty}}.
\end{align}
For $I_2$, it holds from \eqref{2.1.6} that
\begin{align}\label{3.2.16-3}
|I_2|\leq Cm^{3+\gamma}e^{-\f{m_0|v|^2}{4}}\|\mathbf{h}\|_{L^{\infty}},
\end{align}
where we have denoted $m_0:=\frac{1}{2}\min\{m_A,m_B\}$. Substitute \eqref{3.2.16-1}, \eqref{3.2.16-2} and \eqref{3.2.16-3} into \eqref{3.2.15} to get
\begin{align}\label{3.2.16-4}
|h_i(y,v)|&\leq |I_3|+C\bigg\{e^{-\vep^{-1}\hat{\nu}_0t}+m^{3+\gamma}e^{-\f{m_0|v|^2}{4}}\bigg\}\|\mathbf{h}\|_{L^{\infty}}
+C\vep^{1/2}\|\nu^{-1}w_{q,\vartheta}\mathbf{r}_2\|_{L^{\i}}\nonumber\\
&:=|I_3|+A(t,v).
\end{align}
It remains to estimate $I_3$. Denote $y_1:=y-\hat{v}_1(t-\tau)$ and
$$
[Y_{1}^S(\tau_1),V^S_1(\tau_1)]:=[Y(\tau_1;\tau,Y(\tau),u),V({\tau_1;\tau,Y(\tau),u})]=[y_1-(\tau-\tau_1)\hat{u}_1,u].
$$
Then, we substitute the mild form of $h_j(Y^{S}(\tau),u)$ from \eqref{3.2.15} into $I_3$ and use \eqref{3.2.16-4} to obtain
\begin{align}\label{3.2.16-5}
I_3&\lesssim\vep^{-1}\sum_{j=A,B}
\int_0^{t}\int_{\mathbb{R}^3}e^{-\vep^{-1}\hat{\nu}_0(t-\tau)}(\varrho+|v|^2)^{\f{|\gamma|}{2}}|\tilde{k}^{ji}_r(v,u)h_j(y-\hat{v}_1(t-\tau),u)|\dd u\dd\tau\nonumber\\
&\lesssim_{\varrho} C\vep^{-1}(1+|v|^2)^{\f{|\gamma|}{2}}\sum_{j=A,B}\int_0^{t}\int_{\mathbb{R}^3}e^{-\vep^{-1}\hat{\nu}_0(t-\tau)}
|\tilde{k}^{ji}_r(v,u)A(\tau,u)|\dd u\dd\tau\nonumber\\
&\quad+C\vep^{-2}\sum_{j=A,B}\sum_{l=A,B}\int_0^te^{-\vep^{-1}\hat{\nu}_0(t-\tau)}\dd\tau \int_0^{\tau}\int_{\mathbb{R}^3}\int_{\mathbb{R}^3}U(\tau_1,u',u;\tau,v)\dd u'\dd u\dd\tau_1\nonumber\\
&:=B_1+B_2,
\end{align}
where
$$
U(\tau_1,u',u;\tau,v):=e^{-\vep^{-1}\hat{\nu}_0(\tau-\tau_1)}
(1+|v|^2)^{\f{|\gamma|}{2}}(1+|u|^2)^{\f{|\gamma|}{2}}\big|\tilde{k}^{ji}(v,u)
\tilde{k}^{lj}(u,u')h_{l}(Y_1^S(\tau_1),V_1^S(\tau_1))\big|.
$$
From \eqref{2.1.10} and \eqref{2.1.11}, one gets that
\begin{align}\label{3.2.16-6}
|B_1|\leq C\big\{m^{\gamma-1}e^{-\vep^{-1}\hat{\nu}_0t}
+m^{3+\gamma}\big\}\|\mathbf{h}\|_{L^{\infty}}+Cm^{\gamma-1}\vep^{1/2}\|\nu^{-1}w_{q,\vartheta}\mathbf{r}_2\|_{L^{\infty}}.
\end{align}
As for $B_2$, when $|v|>N$, it holds from \eqref{2.1.10} that
\begin{align}\label{3.2.16-7}
|B_2|\leq \f{Cm^{2(\gamma-1)}}{N}\|\mathbf{h}\|_{L^{\infty}}.
\end{align}
When $|v|\leq N$, we split the integral domain of $U$ with respect to $\dd \tau_1\dd u\dd u'$ into the following four parts: $$
\begin{aligned}\{|u|>2N\}&\cup\{|u|\leq 2N, |u'|>3N\}\cup\{|u|\leq 2N, |u'|\leq 3N, \tau-\f{\vep}N<\tau_1\leq \tau\}\\
&\cup\{|u|\leq 2N, |u'|\leq 3N, 0\leq \tau_1\leq\tau-\f{\vep}{N}\}.
\end{aligned}$$
For the first and second regions, we have $|u-v|\geq N$ or $|u-u'|\geq N$, such that
either $$|\tilde{k}_r^{ji}(v,u)|\leq e^{-\f{c_1N^2}{2}}\big|\tilde{k}_r^{ji}(v,u)e^{\f{c_1|v-u|^2}{2}}\big|,$$ 
or
$$
|\tilde{k}_r^{lj}(u,u')|\leq e^{-\f{c_1N^2}{2}}\big|\tilde{k}_r^{lj}(u,u')e^{\f{c_1|u-u'|^2}{2}}\big|,
$$
for $i,j,l\in\{A,B\}$.
Therefore, we combine the above argument with \eqref{2.1.11} to bound the integral of $U(\tau_1,u,u';\tau)$ over $\{|u|>2N\}\cup\{|u|\leq 2N,|u'|>3N\}$ by
\begin{align}\label{hinfty1}
&\int_{\{|u|>2N\}\cup\{|u|\leq 2N,|u'|>3N\}}U(\tau_1,u,u';\tau)\dd u'\dd u\dd\tau_1\notag\\
&\leq Cm^{2(\gamma-1)}e^{-\f{c_1N^2}{4}}\|\mathbf{h}\|_{L^{\infty}}\cdot\int_0^{\tau}e^{-\vep^{-1}\hat{\nu}_0(t-\tau_1)}\dd\tau_1\notag\\
&\leq C\vep m^{2(\gamma-1)}e^{-\f{c_0N^2}{4}}\|\mathbf{h}\|_{L^{\infty}}.
\end{align}
For the third region, it is direct to show that
\begin{align}\label{hinfty2}
\int_{\{|u|\leq2N,|u'|\leq 3N,\tau-\f{\vep}N<\tau_1\leq \tau\}}U(\tau_1,u,u';\tau)\dd u'\dd u\dd\tau_1\leq\f{C\vep m^{2(\gamma-1)}}{N}\|\mathbf{h}\|_{L^{\infty}}.
\end{align}
By \eqref{2.1.11}, one has a smooth approximate function $\tilde{k}^{ji}_{r,N}(v,u)$ with compact support, such that
$$
\sup_{|v|\leq 3N}\int_{|u|\leq 3N}|\tilde{k}_{r}^{ji}(v,u)-\tilde{k}_{r,N}^{ji}(v,u)|\dd u\leq N^{-7}, \text{ for }i,j\in\{A,B\}.$$
Using the above approximation and the fact that $$
\begin{aligned}\tilde{k}_{r}^{ji}(v,u)\tilde{k}_{r}^{lj}(u,u')=&\tilde{k}_{r}^{ji}(v,u)
[\tilde{k}_{r}^{lj}(u,u')-\tilde{k}_{r,N}^{lj}(u,u')]\\
&+
\tilde{k}_{r,N}^{lj}(u,u')[\tilde{k}_{r}^{ji}(v,u)-\tilde{k}_{r,N}^{ji}(v,u)]
+\tilde{k}_{r,N}^{ji}(v,u)\tilde{k}_{r,N}^{lj}(u,u'),
\end{aligned}
$$
we have that the integral of $U$ over the last region is bounded as
\begin{align*}
	&\int_{\{|u|\leq 2N, |u'|\leq 3N, 0\leq \tau_1\leq\tau-\f{\vep}{N}\}}U(\tau_1,u,u';\tau)\dd u'\dd u\dd\tau_1\notag\\
&\leq \f{C\vep}{N}\|\mathbf{h}\|_{L^{\infty}}+C_N\int_0^{\tau-\f{\vep}N}e^{-\vep^{-1}\hat{\nu}_0(\tau-\tau_1)}\dd\tau_1\notag\\
&\qquad\qquad\qquad\qquad\times\int_{|u|\leq 2N,|u'|\leq 3N}\big|\tilde{k}_{r,N}^{ji}(v,u)\tilde{k}_{r,N}^{lj}(u,u')h_l(y_1',u')\big|\dd u'\dd u\notag\\
&\leq\f{C\vep}{N}\|\mathbf{h}\|_{L^{\infty}}+C_N\int_0^{\tau-\f{\vep}N}e^{-\vep^{-1}\hat{\nu}_0(\tau-\tau_1)}\dd\tau_1
\bigg(\int_{|u|\leq 2N,|u'|\leq 3N}\big|\tilde{k}_{r,N}^{ji}(v,u)\tilde{k}_{r,N}^{lj}(u,u')\big|^2\dd u'\dd u\bigg)^{\f12}\notag\\
&\qquad\qquad\qquad\qquad\times\bigg(\int_{|u|\leq 2N,|u'|\leq 3N}|\nu_l^{1/2}\psi_l(y_1',u')|\dd u'\dd u\bigg)^{\f12}\notag\\
&\leq \f{C\vep}{N}\|\mathbf{h}\|_{L^{\infty}}+C_N\int_0^{\tau-\f{\vep}N}e^{-\vep^{-1}\hat{\nu}_0(\tau-\tau_1)}\dd\tau_1\bigg(\int_{|u_1|\leq 2N,|u'|\leq 3N}|\nu_l^{1/2}\psi_l(y_1',u')|^2\dd u'\dd u_1\bigg)^{\f12},
\end{align*}
where we have denoted $y_1':=Y^{S}_1(\tau_1)=y_1-(\tau-\tau_1)\hat{u}_1$. Take $\varrho:=1+\f{|\gamma|s^2}{4}$. Then, the Jacobian satisfies 
$$
\big|\f{\dd y_1'}{\dd u_1}\big|=(\varrho+|u|^2)^{\f{|\gamma|}{2}-1}\big|\varrho+|u|^2+|\gamma|(u_1^2-su_1)\big|(\tau-\tau_1)\geq c(1+|u|^2)^{\f{|\gamma|}{2}}(\tau-\tau_1)\geq c\f{\vep}{N}.
$$
Thus, the change of variables $y'_1\rightarrow u_1$ implies
\begin{align}\label{hinfty3}
\int_{|u|\leq 2N, |u'|\leq 3N, 0\leq \tau_1\leq\tau-\f{\vep}{N}}U(\tau_1,u,u';\tau,v)\dd u'\dd u\dd \tau_1\leq \f{C\vep}{N}\|\mathbf{h}\|_{L^{\infty}}+C_N\vep^{1/2}\|\nu^{1/2}\psi\|_{L^2}.
\end{align}
This completes the estimates for integral of $U$ over four regions. Therefore, we collect \eqref{hinfty1}, \eqref{hinfty2} and \eqref{hinfty3} for $|v|\leq N$, together with \eqref{3.2.16-7} for $|v|>N$, to conclude that
\begin{align}\label{3.2.16-8}
|B_2|\leq \f{Cm^{2(\gamma-1)}}{N}\|\mathbf{h}\|_{L^{\infty}}+C_{N,m}\vep^{-1/2}\|\nu^{1/2}\psi\|_{L^2},\ \text{for any $v\in\R^3$}.
\end{align}
It follows from \eqref{3.2.16-4}, \eqref{3.2.16-5}, \eqref{3.2.16-6} and \eqref{3.2.16-8} that
$$\begin{aligned}
\|\mathbf{h}\|_{L^{\infty}}\leq& C\bigg\{m^{2(\gamma-1)}e^{-\vep^{-1}\hat{\nu}_0t}+m^{3+\gamma}+\f{m^{2(\gamma-1)}}{N}\bigg\}\|\mathbf{h}\|_{L^{\infty}}\\
&+C_{N,m}\vep^{-1/2}\|\nu^{1/2}\psi\|_{L^2}
+Cm^{2(\gamma-1)}\vep^{1/2}\|w_{q,\vartheta}\mathbf{r}_2\|_{L^{\infty}}.
\end{aligned}
$$
We further take $Cm^{3+\ga}=\f{1}{4}$ and $t,N>0$ suitably large such that
$$
C\bigg\{m^{2(\gamma-1)}e^{-\vep^{-1}\hat{\nu}_0t}+m^{3+\gamma}+\f{m^{2(\gamma-1)}}{N}\bigg\}\leq \f{1}{2},
$$
so as to obtain that
$$\begin{aligned}
	\|\mathbf{h}\|_{L^{\infty}}\leq C\vep^{-1/2}\|\nu^{1/2}\psi\|_{L^2}
	+C\vep^{1/2}\|w_{q,\vartheta}\mathbf{r}_2\|_{L^{\infty}},
\end{aligned}
$$
which, combined with \eqref{3.2.11}, yields \eqref{3.2.16}. When $0\leq \gamma\leq 1$, \eqref{3.2.16} can be also proved by similar arguments with the mild formulation given by \eqref{3.2.14}. We omit it for brevity. The proof of Lemma \ref{lm3.2.5} is therefore completed.
\end{proof}

Now we are ready to prove Proposition \ref{prop3.2}.
\begin{proof}[ Proof of Proposition \ref{prop3.2}] Define the solution space
\begin{align}
\mathcal{X}:=\{\psi: \nu^{1/2}\psi\in L^2, w_{q,\vartheta}\psi\in L^{\infty},\lim_{y\rightarrow\pm\infty}\psi(y,v)=0,\text{ a.e }v\in\mathbb{R}^3\}\nonumber
\end{align}
with the norm
\begin{align}
\|\psi\|_{\mathcal{X}}:=\vep^{-1/2}\|\nu^{1/2}\psi\|_{L^2}+\|w_{q,\vartheta}\psi\|_{L^{\infty}}.\nonumber
\end{align}
When $\delta=0$, the solution operator $\mathcal{S}_0^{-1}$ to \eqref{3.2.2} has been constructed in Lemma \ref{lm3.2.1}. Next, we construct $\mathcal{S}^{-1}_{\delta}$ for $0<\delta\ll1$. Define the mapping $\mathcal{T}_{\delta}\psi:=\mathcal{S}^{-1}_0(\vep^{-1}\delta\b{K}\psi+\vep^{-1/2}\mathbf{r}_2)$. For any $\psi_1$, $\psi_2\in \mathcal{X}$, it holds from Lemma \ref{lm3.2.2} and Lemma \ref{lm3.2.5} that
$$\begin{aligned}
\|\mathcal{T}_{\delta}(\psi_1-\psi_2)\|_{\mathcal{X}}&=\vep^{-1}\delta\|\mathcal{S}^{-1}_0\b{K}(\psi_1-\psi_2)\|_{\mathcal{X}}\\
&\leq C\delta\vep^{-1}\bigg\{\vep^{-1/2}\|\nu^{1/2}\mathcal{S}^{-1}_0\b{K}(\psi_1-\psi_2)\|_{L^2}+
\|w_{q,\vartheta}\mathcal{S}^{-1}_0\b{K}(\psi_1-\psi_2)\|_{L^{\infty}}\bigg\}\\
&\leq C\delta\vep^{-1}\bigg\{\|\nu^{-1/2}\b{K}(\psi_1-\psi_2)\|_{L^2}+\vep^{1/2}\|\nu^{-1}w_{q,\vartheta}\b{K}(\psi_1-\psi_2)\|_{L^{\infty}}\bigg\}\\
&\leq C\delta\vep^{-1/2}\bigg\{\vep^{-1/2}\|\nu^{1/2}(\psi_1-\psi_2)\|_{L^2}+\|w_{q,\vartheta}(\psi_1-\psi_2)\|_{L^{\infty}}\bigg\}\\
&\leq C\delta\vep^{-1/2}\|\psi_1-\psi_2\|_{\mathcal{X}}.
\end{aligned}
$$
Choose $\delta'=\f{\vep^{1/2}}{2C}$ to conclude that $\mathcal{T}_{\delta}$ is a contraction mapping for $0<\delta\leq \delta'$ such that it has a fixed point $\psi_{\delta}$ which solves the equation \eqref{3.2.2}. Thus the construction of $\mathcal{S}_{\delta}^{-1}$, for $0\leq\delta\leq \delta'$, is completed. Next, we define $\mathcal{T}_{\delta'+\delta}\psi:=\mathcal{S}^{-1}_{\delta'}(\vep^{-1}\delta\b{K}\psi+\vep^{-1/2}\mathbf{r}_2)$. Notice that by our uniform estimates \eqref{3.2.11} and \eqref{3.2.16}, the upper bounds on the norms of solution operator $\mathcal{S}_{\delta'}^{-1}$ are independent of $\delta'$. Therefore, by utilizing the same argument, it can be shown that $\mathcal{T}_{\delta'+\delta}$ is also a contraction mapping in $\mathcal{X}$ for $0<\delta\leq \delta'$, so that it has a fixed point in $\mathcal{X}$. This yields the existence of the solution operator $\mathcal{S}_{2\delta'}^{-1}$. Following the procedure step by step, the solution operator $\mathcal{S}^{-1}_1$ is constructed. Finally, the continuity follows directly from $L^{\infty}$-convergence. Therefore, the proof of Proposition \ref{prop3.2} is completed. \end{proof}
\subsection{Nonlinear problem}
This part is devoted to construct solutions to the nonlinear coupled system \eqref{3.1.13} and \eqref{3.1.15}. We let $y\rightarrow\infty$ in \eqref{3.1.13} and \eqref{3.1.15} to obtain
\begin{align}\label{zinf}
    \lambda' z_\infty+\langle\mu_{\vep},(I-P)&(z_\infty\varphi_{\vep}+\vep\psi_\infty)\rangle\notag\\&-\langle\varphi_{\vep},\mathbf{\Gamma}(z_\infty\phi_{\vep}+\vep\psi_\infty,\psi_\infty)
+\mG(\psi_\infty,z_\infty\phi_{\vep})\rangle=0.
\end{align}
Then  \eqref{3.1.13}$-p^2(y)\times$\eqref{zinf} gives a new expression of the nonlinear term $r_1=J_1+N_1$ that
$$
\begin{aligned}
J_1=&p(y)(1-p(y))\big[\lambda'z_\infty+{\langle\mu_{\vep},z_{\infty}\varphi_{\vep}\rangle}\big]\\
&+p^2(y)(1-p(y))
\langle\varphi_{\vep},\mG(z_{\infty}\phi_{\vep}+\vep p\psi_{\infty},\psi_\infty)+\vep\mG(\psi_\infty,\psi_{\infty})+\mG(\psi_\infty,z_\infty \phi_\vep)\rangle
\end{aligned}
$$
and $$
\begin{aligned}
N_1(z_1,&\psi_1)=-\vep\zeta_{\vep}z_1^2+\vep\langle\mu_{\vep},(I-P)(z_1\phi_{\vep}+\vep^{1/2}\psi_1)\rangle\\
&-
\vep^{1/2}\bigg\langle\varphi_{\vep},\mG(\mathbf{f},\psi_1)+
\mG(\psi_1,z_0\phi_{\vep})+\vep^{1/2}\mG(z_1\phi_{\vep}+\vep^{1/2}\psi_1,\psi_0)+\vep^{1/2}\mG(\psi,z_1\phi_{\vep})\bigg\rangle.
\end{aligned}
$$
Here, $p(y)$ is the tanh-function defined in \eqref{p}.

\begin{proposition}\label{prop3.3}
Let $-3<\gamma\leq1,$ $\theta=\f{3}{2-\gamma}$, $0<q\leq \hat{q}_1$ for $\hat{q}_1$ given in Proposition \ref{prop3.2} and $\vartheta>2$.  There exist positive constants $\vep_3$ and $\varpi>0$, such that for any $0<\vep\leq \vep_3$ and $z_{1,0}\in \mathbb{R}$, the remainder system \eqref{3.1.13} and \eqref{3.1.15} with $z_{1}(0)=z_{1,0}$ admits a unique solution $[z_1(y),\psi_1(y,v)]^T$, which satisfies
\begin{align}\label{3.3.1}
|e^{\varpi|\cdot|^{\theta}}z_1|_{L^2_y}+|z_1|_{L^{\infty}_y}+\|\nu^{1/2}e^{\varpi|\cdot|^{\theta}}\psi_1\|_{L^2}+\|w_{q,\vartheta}\psi_1\|_{L^{\infty}}\leq C.
\end{align}
Here, the constant $C>0$ is independent of $\vep$. Moreover, $z_1$ is continuous and $\psi_1$ is continuous away from $\mathbb{R}\times\{v_1=s\}$.
\end{proposition}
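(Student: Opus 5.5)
We construct $[z_1,\psi_1]$ as the fixed point of the map $\mathcal{M}:(\tilde z_1,\tilde\psi_1)\mapsto(z_1,\psi_1)$, where $z_1$ solves the linear problem \eqref{L1} with $z_1(0)=z_{1,0}$ and right-hand side $r_1=J_1+N_1(\tilde z_1,\tilde\psi_1)$, and $\psi_1$ solves \eqref{L2} with $\mathbf{r}_2=\mathbf{J}_2+\mathbf{N}_2(\tilde z_1,\tilde\psi_1)$. The map acts on the closed ball $\mathcal{B}_R$ of the space defined by the norm on the left of \eqref{3.3.1}, together with the far-field conditions; the spatial weight $e^{\varpi|y|^\theta}$ is carried into the kinetic part as well. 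Proposition \ref{prop3.1} controls $z_1$ and Proposition \ref{prop3.2} controls $\psi_1$. Since $\mathbf{H}$ does not commute with $e^{\varpi|y|^\theta}$, the $\psi_1$ equation has to be conjugated. Setting $\tilde\psi=e^{\varpi\langle y\rangle^\theta}\psi_1$, the new equation acquires the extra term $-\varpi\theta\langle y\rangle^{\theta-1}(v_1-s)\tilde\psi$ on the left.

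\textbf{Step 1 (source terms).} First we bound the source terms. $J_1$ and $\mathbf{J}_2$ carry factors $p(1-p)$ or $\pa_yp$, which decay like $e^{-\b\lambda|y|}$ in $y$, and they carry Gaussian velocity weights by \eqref{3.1.4}, \eqref{boundXi}, \eqref{3.1.8-2}. The collision terms are handled by the standard weighted bounds $\|\nu^{-1}w\mG(f,g)\|_{L^\infty}\lesssim\|wf\|_{L^\infty}\|wg\|_{L^\infty}$ and $|\nu^{-1/2}\mG(f,g)|_{L^2_v}\lesssim\|wf\|_{L^\infty}|\nu^{1/2}g|_{L^2_v}+(f\leftrightarrow g)$. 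Consequently $e^{\varpi|y|}J_1$ and $\nu^{-1/2}e^{\varpi|y|}\mathbf{J}_2$ are $O(1)$ in $L^2\cap L^\infty$ once $\varpi<\b\lambda$.

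The nonlinear terms are estimated as follows. $N_1$ is $O(\vep^{1/2})\times(\text{norm})^2+O(\vep^{1/2})\times\text{norm}$. $\mathbf{N}_2$ contains $\vep^{1/2}z_1\mu_\vep$ and bilinear terms in which one slot is $O(1)$ (namely $z\phi_\vep$) and the other is $z_1\phi_\vep+\vep^{1/2}\psi_1$.

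The term $\mG(z\phi_\vep,z_1\phi_\vep)$ in $\mathbf{N}_2$ is only $O(1)$, and it enters \eqref{L2} multiplied by $\vep^{-1/2}$. Proposition \ref{prop3.2}, however, gains the factor $\vep^{1/2}$ in $L^\infty$ and the factor $1$ in $\|\nu^{1/2}\cdot\|_{L^2}$ relative to $\vep^{-1/2}\|\cdot\|$, so this term contributes $O(\|z_1\|)$ to $\psi_1$. That is still linear in $z_1$ with an $O(1)$ constant, which would spoil the contraction if left as is. We therefore treat the coupling as a triangular linear system. The $z_1$-equation feels $\psi_1$ only through $\vep^{1/2}$-small terms, so we estimate $z_1$ first with constant $C_1(|z_{1,0}|+1)$ plus small multiples of the norms. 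Then $\psi_1$ is bounded by $C_2(1+C_1)$ plus small multiples. Choosing $R$ large relative to these constants and $\vep$ small makes $\mathcal{M}$ map $\mathcal{B}_R$ into itself. For differences, the bilinearity gives Lipschitz constants of the same triangular form, so a weighted norm $\|z_1\|+\kappa\|\psi_1\|$ with $\kappa$ small yields a contraction.

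\textbf{Step 2 (main obstacle: $y$-weight for $\psi_1$).} The key difficulty is the weighted $L^2$ bound for $\psi_1$ with soft potentials. For the conjugated equation, the energy estimate (Lemma \ref{lm3.2.2}) produces the commutator $\varpi\theta\int\langle y\rangle^{\theta-1}(v_1-s)|\tilde\psi|^2$. This must be absorbed by $\vep^{-1}c_1\|\nu^{1/2}\tilde\psi\|^2$, but $|v_1-s|\nu^{-1}\sim\langle v\rangle^{1-\gamma}$ is unbounded. We split velocities at $\langle v\rangle^{1-\gamma}\sim\vep^{-1}\langle y\rangle^{1-\theta}$. Below this threshold the commutator is absorbed by the dissipation. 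Above it we use the $L^\infty$ bound with the Gaussian weight $w_{q,\vartheta}$, which supplies decay $e^{-q|v|^2}$ at $|v|^2\gtrsim(\vep^{-1}\langle y\rangle^{1-\theta})^{2/(1-\gamma)}$. Balancing this Gaussian decay against $e^{2\varpi\langle y\rangle^\theta}$ in the scaled variable produces exactly a stretched exponent $\theta<1$ for $\gamma<0$; this is where the exponent in the statement originates. I expect the matching to give precisely the stated $\theta$ and would compute it carefully. Concretely, I would first prove the weighted-$L^2$ analogue of Lemma \ref{lm3.2.2} with this velocity splitting, using the unweighted $L^\infty$ bound \eqref{3.2.16} for the large-velocity part. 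The $z_1$ weight in $L^2$ then follows from \eqref{3.2.0-1.1}.

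\textbf{Step 3 (limits, continuity, uniqueness).} Continuity and the far-field limits pass to the fixed point from Propositions \ref{prop3.1} and \ref{prop3.2}, using $L^\infty$ convergence of the iterates. Uniqueness in $\mathcal{B}_R$ follows from the contraction.
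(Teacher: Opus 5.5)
Your overall scheme matches the paper's. Proposition \ref{prop3.1} controls $z_1$, and Proposition \ref{prop3.2} gives a $y$-unweighted $L^\infty$ bound for $\psi_1$. The $O(1)$ linear feed of $z_1$ into $\mathbf{N}_2$ is neutralised by weighting the $z_1$-part of the norm more heavily; the factor $2\hat C_4$ in $\|\cdot\|_{\mathcal{Y}}$ plays the role of your small $\kappa$. The spatial weight on $\psi_1$ is obtained only in $L^2$, by an energy estimate whose transport commutator is split in velocity, with the large-velocity part paid for by $\|w_{q,\vartheta}\psi_1\|_{L^\infty}$. A small slip: $\mathbf{H}$ acts only in $v$ and does commute with $e^{\varpi\langle y\rangle^\theta}$; the extra term comes from $(v_1-s)\partial_y$, as your own formula shows.

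The step that does not go through as written is the weighted energy estimate. You cannot invoke Lemma \ref{lm3.2.2} for $\tilde\psi=e^{\varpi\langle y\rangle^\theta}\psi_1$. Its hypothesis \eqref{3.2.10.2}, and the vanishing of $\int_{\mathbb{R}^3}(v_1-s)|\tilde\psi|^2\,dv$ as $y\to\pm\infty$, are not known. Proposition \ref{prop3.2} gives only $\psi_1\to0$ and $\|w_{q,\vartheta}\psi_1\|_{L^\infty}<\infty$; it says nothing about boundedness or decay of $e^{\varpi\langle y\rangle^\theta}\psi_1$. Moreover, finiteness of $\|\nu^{1/2}\tilde\psi\|_{L^2}$ is exactly what you are trying to prove. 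The paper singles this out as the obstruction to getting the weighted $L^2$ bound directly by the energy method.

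What is missing is a device that avoids the boundary terms at infinity. The paper multiplies by $W_\varpi^2\psi_1$ and integrates only over $[y,0]$ for $y<0$ (and over $[0,y]$ for $y>0$).
\begin{itemize}
\item The boundary term at $0$ is bounded by $C\|w_{q,\vartheta}\psi_1\|_{L^\infty}^2$.
\item The boundary term at $y$ is bounded above by the unweighted energy identity on $(-\infty,y]$, multiplied by $W_\varpi^2(y)$, using $W_\varpi(y)\le W_\varpi(y_1)$ for $y_1\le y<0$.
\item The resulting bound is uniform in $y$, and one then lets $y\to-\infty$.
\end{itemize}
Your global version can instead be made rigorous with a truncated weight $W_R=e^{\varpi\min\{\langle y\rangle,R\}^\theta}$. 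Then $W_R\psi_1$ is bounded and tends to $0$, so the boundary terms vanish as in Lemma \ref{lm3.2.2}. Also $|W_R'/W_R|\le\varpi\theta\langle y\rangle^{\theta-1}$, so your splitting gives a bound uniform in $R$, and monotone convergence finishes.

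Finally, your balance will not reproduce the printed exponent. Absorbing the commutator needs $\langle v\rangle^{1-\gamma}\lesssim\varepsilon^{-1}\langle y\rangle^{1-\theta}$. Above that threshold the Gaussian beats $e^{2\varpi\langle y\rangle^\theta}$ if and only if $2(1-\theta)/(1-\gamma)\ge\theta$, i.e.\ $\theta\le 2/(3-\gamma)$. The paper's split at $|v|=\langle y\rangle^{\theta/2}$ leads to the same condition. The value $2/(3-\gamma)$ is the one used in its proof, in Corollary \ref{cor3.4} and in Theorem \ref{thm1.1}. The $\frac{3}{2-\gamma}$ in the statement is a misprint: it exceeds $1$ for $\gamma>-1$, outside the range allowed in Proposition \ref{prop3.1}. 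Note also that $\theta<1$ for every $\gamma<1$, not only for $\gamma<0$.
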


\begin{proof}
Without loss of generality, we assume that $z_{1}(0)=z_{1,0}=0.$ The solution is to be constructed via the following iteration problems:
\begin{equation}\label{3.3.2}
\left\{
\begin{aligned}
&\pa_yz_1^{n+1}-\lambda z_1^{n+1}+2\zeta_{\vep}z_0z_1^{n+1}=J_1+N_1(z_1^n,\psi_1^n),\\
&(v_1-s)\pa_y\psi_1^{n+1}+\vep^{-1}\mathbf{H}\psi_1^{n+1}=\vep^{-1/2}[\mathbf{J}_2+\mathbf{N}_2(z_1^{n},\psi_1^n)],\\
&z_1(0)=0,\quad\lim_{y\rightarrow\pm\infty}z_1^{n+1}(y)=0,\quad\lim_{y\rightarrow\pm\infty}\psi_1^{n+1}(y,v)=0,\\
\end{aligned}\right.
\end{equation}
with $z_{1}^0=0$ and $\psi_1^0(y,v)=0$. Now, we estimate terms on the right hand side of the first two equations in \eqref{3.3.2}. By using the fact
\begin{equation}\label{3.3.3}\left\{
\begin{aligned}
&\big|\nu_j^{-1/2}\Gamma^{ij}(f_i,f_j)\big|_{L^2_v}\leq C|w_{q,\vartheta}f_i|_{L^{\infty}_v}\cdot|\nu_j^{1/2}f_j|_{L^2_v},\\
&\big|\nu_j^{-1}w_{q,\vartheta}\Gamma^{ij}(f_i,f_j)\big|_{L^{\infty}_v}\leq C|w_{q,\vartheta}f_i|_{L^{\infty}_v}\cdot|w_{q,\vartheta}f_j|_{L^\infty_v},
\end{aligned}\right.
\end{equation}
and exponential decay of $p(y)$ in \eqref{p}, we have
\begin{align}\label{3.3.4}
|W_{\varpi}J_1|_{L^2_y}+|J_1|_{L^{\infty}_y}
+\|\nu^{-1/2}W_{\varpi}\mathbf{J}_2\|_{L^2}+\|\nu^{-1}w_{q,\vartheta}\mathbf{J}_2\|_{L^{\infty}}\leq C.
\end{align}
Here, we have denoted the spatial weight function $W_{\varpi}(y):=e^{\varpi(|y|^2+1)^{\f{\theta}{2}}}$ with $0<\varpi<\f{\lambda}{4}$ for $\theta=1$, and $\varpi>0$ for $0<\theta<1$. Similarly, for $N_1$, it holds that
\begin{align}\label{3.3.5}
|W_{\varpi}N_1(z_1,\psi_1)|_{L^2_y}\lesssim\vep^{1/2}\{1+|z_1|_{L^{\infty}_y}+\|w_{q,\vartheta}\psi_1\|_{L^{\infty}}\}\cdot
\{|W_{\varpi}z_1|_{L^2_y}+\|\nu^{1/2}W_{\varpi}\psi_1\|_{L^{2}}\},
\end{align}
and
\begin{align}\label{3.3.6}
|N_1(z_1,\psi_1)|_{L^{\infty}_y}\lesssim\vep^{1/2}\{1+|z_1|_{L^{\infty}_y}+\|w_{q,\vartheta}\psi_1\|_{L^{\infty}}\}\cdot
\{|z_1|_{L^\infty_y}+\|w_{q,\vartheta}\psi_1\|_{L^{\infty}}\}.
\end{align}
As for $N_2$, we have
\begin{align}\label{3.3.7}
\|&\nu^{-1/2}W_{\varpi}\mathbf{N}_2(z_1,\psi_1)\|_{L^2}\nonumber\\
&\quad\lesssim |W_{\varpi}z_1|_{L^2_y}+\vep^{1/2}\{1+|z_1|_{L^{\infty}_{y}}+\|w_{q,\vartheta}\psi_1\|_{L^{\infty}}\}\cdot
\{|W_{\varpi}z_1|_{L^2_{y}}+\|\nu^{1/2}W_{\varpi}\psi_1\|_{L^{2}}\},
\end{align}
and
\begin{align}\label{3.3.8}
\|\nu^{-1}&w_{q,\vartheta}\mathbf{N}_2(z_1,\psi_1)\|_{L^\infty}\lesssim|z_1|_{L^\infty_{y}}
+\vep^{1/2}\{1+|z_1|_{L^{\infty}_{y}}+\|w_{q,\vartheta}\psi_1\|_{L^{\infty}}\}\cdot
\{|z_1|_{L^\infty_{y}}+\|w_{q,\vartheta}\psi_1\|_{L^{\infty}}\}.
\end{align}
Combine \eqref{3.3.4}, \eqref{3.3.5}, \eqref{3.3.6}, \eqref{3.3.7} and \eqref{3.3.8} with Proposition \ref{prop3.1} and Proposition \ref{prop3.2} to get
\begin{align}\label{3.3.9}
|W_{\varpi}z_1^{n+1}|_{L^2_{y}}+|z_1^{n+1}|_{L^{\infty}_{y}}\leq &\hat{C}_1+C\vep^{1/2}\{1+|z_1^n|_{L^{\infty}_y}+\|w_{q,\vartheta}\psi_1^n\|_{L^{\infty}}\}\nonumber\\
&\times\{|W_{\varpi}z_1^n|_{L^2_{y}}+|z_1^n|_{L^\infty_{y}}
+\|\nu^{1/2}W_{\varpi}\psi_1^n\|_{L^{2}}+\|w_{q,\vartheta}\psi_1^n\|_{L^{\infty}}\},
\end{align}
and
\begin{align}\label{3.3.10}
\|w_{q,\vartheta}\psi_1^{n+1}\|_{L^{\infty}}\leq& \hat{C}_1+\tilde{C}_1|W_{\varpi}z^n_1|_{L^2_y}+C\vep^{1/2}\{1+|z_1^{n}|_{L^\infty_{y}}+\|w_{q,\vartheta}\psi_1^n\|_{L^{\infty}}\}\nonumber\\
&\times
\{|z_1^n|_{L^{\infty}_y}+\|w_{q,\vartheta}\psi_1^n\|_{L^{\infty}}+|W_{\varpi}z_1^n|_{L^2_y}+\|\nu^{1/2}W_{\varpi}\psi_1^n\|_{L^2}\}.
\end{align}
Here, the positive constants $\hat{C}_1$, $\tilde{C}_1$ and $C$ are independent of $\vep$.
To close our estimate, it suffices to estimate the spatial-weighted $L^2$-norm $\|\nu^{1/2}W_{\varpi}\psi_1^{n+1}\|_{L^2}$. This can be achieved through an interplay with $L^{\infty}$-norm of higher-moments $\|w_{q,\vartheta}\psi_{1}^{n+1}\|$. In fact, we can show that
\begin{align}
\|\nu^{1/2}W_{\varpi}\psi_1^{n+1}\|_{L^{2}}\leq& \hat{C}_2+C\vep^{1/2}\|w_{q,\vartheta}\psi_1^{n+1}\|_{L^{\infty}}\label{3.3.11}\nonumber\\
&+C\vep^{1/2}\{1+|z_1^{n}|_{L^\infty_{y}}+\|w_{q,\vartheta}\psi_1^n\|_{L^{\infty}}\}\cdot
\{|W_{\varpi}z_1^n|_{L^2_y}+\|\nu^{1/2}W_{\varpi}\psi_1^n\|_{L^2}\},
\end{align}
for some positive constant $\hat{C}_2$. Notice that it would be very difficult to get \eqref{3.3.11} directly by energy method, since the integrating by parts is not valid due to the lack of asymptotic behavior $\lim_{y\rightarrow \pm\infty} W_{\varpi}\psi_1^{n+1}=0$. Thus, we need a pointwise control of $W_{\varpi}\psi_1^{n+1}$. To do this, fixing $y<0$, then
multiplying $W^2_{\varpi}\psi_1^{n+1}$ to $\eqref{3.3.2}_2$ and integrating over $[y,0]\times\R^3$, we have
\begin{align}\label{3.3.12}
&\underbrace{\vep^{-1}\int_y^0\langle\mathbf{H}\psi_1^{n+1},W_{\varpi}^2\psi_1^{n+1}\rangle\dd y_1}_{I_5}\nonumber\\
&\quad=\underbrace{-\int^0_y\langle(v_1-s)\ \pa_y\psi_1^{n+1},W_{\varpi}^2\psi_1^{n+1}\rangle\dd y_1}_{I_6}+\underbrace{\vep^{-1/2}\int_y^0\langle(\mathbf{J}_2+\mathbf{N}_2),W_{\varpi}^2\psi_1^{n+1}\rangle\dd y_1}_{I_7}.
\end{align}
Using the positivity of $\mathbf{H}$ in \eqref{3.1.16}, one gets
\begin{align}\label{I5}
I_5\geq c_1\vep^{-1}\int_y^0|W_{\varpi}\nu^{1/2}\psi_1^{n+1}|_{{L^2_v}}^2\dd y_1.
\end{align}
By Cauchy-Schwarz inequality, for any $0< \eta<1$, it holds that
\begin{align}\label{I7}
I_7\leq& \eta\vep^{-1}\int_y^0|W_{\varpi}\nu^{1/2}\psi_1^{n+1}|_{{L^2_v}}^2\dd y_1\notag\\
&\quad+C_{\eta}\int_y^0|W_{\varpi}\nu^{-1/2}\mathbf{J}_2|_{{L^2_v}}^2+|W_{\varpi}\nu^{-1/2}\mathbf{N}_2|_{{L^2_v}}^2\dd y_1\notag\\
\leq&\eta\vep^{-1}\int_y^0|W_{\varpi}\nu^{1/2}\psi_1^{n+1}|_{{L^2_v}}^2\dd y_1+C_{\eta}\|W_{\varpi}\nu^{-1/2}\mathbf{J}_2\|_{L^2}^2+C_{\eta}\|W_{\varpi}\nu^{-1/2}\mathbf{N}_2\|_{L^2}^2.
\end{align}
We further apply integrating by parts to obtain
\begin{align}\label{3.3.13}
I_6=\f{\langle W_{\varpi}^2(v_1-s)\psi_1^{n+1},\psi_1^{n+1}\rangle}{2}\bigg|_{y_1=0}^{y_1=y}+\int_y^0W_{\varpi}W_{\varpi}'\langle(v_1-s)\psi_1^{n+1},\psi_1^{n+1}\rangle\dd y_1.
\end{align}
Noticing for our choice $\vartheta>2$, the first term on the right hand side above can be bounded by
\begin{align}\label{I61}
\f{|\langle W_{\varpi}^2(v_1-s)\psi_1^{n+1},\psi_1^{n+1}\rangle|}{2}\bigg|_{y_1=0}\leq C\sup_{y,v}\big|(1+|v|^2)^{\vartheta}\psi_1^{n+1}\big|^2\leq C\|w_{q,\vartheta}\psi_1^{n+1}\|_{L^{\infty}}^2.
\end{align}
Next, to control $\f{\langle(v_1-s)\psi_1^{n+1},\psi_1^{n+1}\rangle}{2}\bigg|_{y_1=y}$ pointwisely, multiplying $\eqref{3.3.2}_2$ by $\psi_1^{n+1}$ and then integrating over $(-\infty,y]$, we have
\begin{align}
&\f{\langle(v_1-s)\psi_1^{n+1},\psi_1^{n+1}\rangle}{2}\bigg|_{y_1=y}+\vep^{-1}\int_{-\infty}^{y}
\langle\mathbf{H}\psi_1^{n+1},\psi_1^{n+1}\rangle\dd y_1=\vep^{-1/2}\int_{-\infty}^{y}\langle\mathbf{J}_2+\mathbf{N}_2
,\psi_1^{n+1}\rangle\dd y_1.\nonumber
\end{align}
Then, by Cauchy-Schwarz inequality, it holds, for any $\eta'>0$, that
\begin{align}
&\f{\langle(v_1-s)\psi_1^{n+1},\psi_1^{n+1}\rangle}{2}\bigg|_{y_1=y}+\vep^{-1}\int_{-\infty}^{y}
\langle\mathbf{H}\psi_1^{n+1},\psi_1^{n+1}\rangle\dd y_1\nonumber\\
&\leq \eta'\vep^{-1}\int_{-\infty}^{y}|\nu^{1/2}\psi^{n+1}_1|_{{L^2_v}}^2\dd y_1+C_{\eta'}\int_{-\infty}^y|\nu^{-1/2}\mathbf{J}_2|_{{L^2_v}}^2+|\nu^{-1/2}\mathbf{N}_2|_{{L^2_v}}^2\dd y_1.\nonumber
\end{align}
Use the positivity of $\mathbf{H}$ \eqref{3.1.16} again and take $\eta'>0$ suitably small to get
\begin{align}\label{3.3.14}
\f{\langle(v_1-s)\psi_1^{n+1},\psi_1^{n+1}\rangle}{2}\bigg|_{y_1=y}\leq C\int_{-\infty}^y\big\{|\nu^{-1/2}\mathbf{J}_2|_{{L^2_v}}^2+|\nu^{-1/2}\mathbf{N}_2|_{{L^2_v}}^2\big\}\dd y_1.
\end{align}
Then, multiplying $W^2_{\varpi}(y)$ to \eqref{3.3.14}, together with the fact that $W^2_{\varpi}(y)\leq W^2_{\varpi}(y_1)$ for $y_1\leq y<0$, one has
\begin{align}
\f{\langle(v_1-s)\psi_1^{n+1},W^2_{\varpi}\psi_1^{n+1}\rangle}{2}\bigg|_{y_1=y}&\leq CW^{2}_{\varpi}\int_{-\infty}^{y}|\nu^{-1/2}\mathbf{J}_2|_{{L^2_v}}^2+|\nu^{-1/2}\mathbf{N}_2|_{{L^2_v}}^2\dd y_1\nonumber\\
&\leq C\|\nu^{-1/2}W_{\varpi}\mathbf{J}_2\|_{L^2}^2+C\|\nu^{-1/2}W_{\varpi}\mathbf{N}_2\|_{L^2}^2.\notag
\end{align}
It remains to estimate the second term on the right hand side of \eqref{3.3.13}. First, it is straightforward to show that
\begin{align}\label{I621}
\bigg|\int_y^0W_{\varpi}W_{\varpi}'\langle(v_1-s)\psi_1^{n+1},\psi_1^{n+1}\rangle\dd y_1\bigg|\leq C\int_y^0W_{\varpi}^2(1+|y_1|^2)^{\f{\theta-1}{2}}\int_{\mathbb{R}^3}(1+|v|)|\psi^{n+1}|^2\dd v.
\end{align}
Then, we split the integral domain over $v$ into two parts:
$$\{|v|>(1+|y_1|^2)^{\f{\theta}{4}}\}\cup\{|v|\leq(1+|y_1|^2)^{\f{\theta}{4}}\}.
$$
For the first region, we have, for $0<\varpi\leq \f{q}{8} $, that
$$
e^{-\f{q}{8}|v|^2}\leq e^{-\f{q}{8}(1+|y_1|^2)^{\f\theta2}}\leq W^{-1}_{\varpi}(y_1)\leq W^{-1}_{\varpi}(y),
$$
which leads to
$$\begin{aligned}
\int_{|v|>(1+|y_1|^2)^{\f{\theta}{4}}}(1+|v|)|\psi_1^{n+1}|^2\dd v&\leq W^{-4}_{\varpi}\int_{|v|>(1+|y_1|^2)^{\f{\theta}{4}}}(1+|v|)e^{\f{q|v|^2}{2}}|\psi_1^{n+1}|^2\dd v\\
&\leq CW^{-4}_{\varpi}\|w_{q,\vartheta}\psi_1^{n+1}\|_{L^{\infty}}^2.
\end{aligned}
$$
For the second region, a direct computation shows that
$$
\int_{|v|\leq(1+|y_1|^2)^{\f{\theta}{4}}}(1+|v|)|\psi^{n+1}_1|^2\dd v\leq C(1+|y_1|^2)^{\f{\theta(1-\gamma)}{4}}|\nu^{1/2}\psi_1^{n+1}|_{{L^2_v}}^2.
$$
Collect these two bounds together with \eqref{I621} to obtain, for $\theta=\f{2}{3-\gamma}$, that
\begin{align*}
&\bigg|\int_y^0W_{\varpi}W_{\varpi}'\langle(v_1-s)\psi_1^{n+1},\psi_1^{n+1}\rangle\dd y_1\bigg|\notag\\
&\qquad\leq C\|w_{q,\vartheta}\psi_1^{n+1}\|_{L^\infty}^2\cdot\int_y^0W_{\varpi}^{-2}(1+|y_1|^2)^{\f{\theta-1}{2}}\dd y_1\notag\\
&\qquad\quad+C\int_y^0W^2_{\varpi}(1+|y_1|^2)^{\f{\theta-1}{2}+\f{\theta(1-\gamma)}{4}}|\nu^{1/2}\psi_1^{n+1}|_{{L^2_v}}^2\dd y_1\notag\\
&\qquad\leq C\|w_{q,\vartheta}\psi_1^{n+1}\|_{L^\infty}^2+C\|W_{\varpi}\nu^{1/2}\psi_1^{n+1}\|_{L^2}^2,
\end{align*}
which, together with \eqref{I61}, yields
\begin{align}\label{I6}
	&I_6\leq C\|w_{q,\vartheta}\psi_1^{n+1}\|_{L^\infty}^2+C\|W_{\varpi}\nu^{1/2}\psi_1^{n+1}\|_{L^2}^2.
\end{align}
It follows from \eqref{3.3.12}, \eqref{I5}, \eqref{I7} and \eqref{I6} with suitably small $\eta>0$ that
\begin{align}\label{3.3.16}
&\vep^{-1}\int_y^0|W_{\varpi}\nu^{1/2}\psi_1^{n+1}|_{{L^2_v}}^2\dd y_1\nonumber\\
&\leq \eta\vep^{-1}\int_y^0|W_{\varpi}\nu^{1/2}\psi_1^{n+1}|_{{L^2_v}}^2\dd y_1+C_{\eta}\|w_{q,\vartheta}\psi_1^{n+1}\|_{L^{\infty}}^2+C_{\eta}\|\nu^{1/2}W_{\varpi}\psi_1^{n+1}\|_{L^2}^2
\nonumber\\
&\qquad+C_{\eta}\{\|\nu^{-1/2}W_{\varpi}\mathbf{J}_2\|_{L^2}^2+\|\nu^{-1/2}W_{\varpi}\mathbf{N}_2\|_{L^2}^2\}\nonumber\\
&\leq C\|w_{q,\vartheta}\psi_1^{n+1}\|_{L^{\infty}}^2+C\|\nu^{1/2}W_{\varpi}\psi_1^{n+1}\|_{L^2}^2\notag\\
&\qquad
+C\|\nu^{-1/2}W_{\varpi}\mathbf{J}_2\|_{L^2}^2+C\|\nu^{-1/2}W_{\varpi}\mathbf{N}_2\|_{L^2}^2.
\end{align}
Note that our constant $C>0$ is independent of $y$. Therefore, letting $y\rightarrow-\infty$ in \eqref{3.3.16}, we have
$$
\begin{aligned}
&\vep^{-1}\int_{-\infty}^0|W_{\varpi}\nu^{1/2}\psi_1^{n+1}|_{{L^2_v}}^2\dd y_1\\
&\leq C\|w_{q,\vartheta}\psi_1^{n+1}\|_{L^{\infty}}^2+C\|\nu^{1/2}W_{\varpi}\psi_1^{n+1}\|_{L^2}^2
+C\|\nu^{-1/2}W_{\varpi}\mathbf{J}_2\|_{L^2}^2+C\|\nu^{-1/2}W_{\varpi}\mathbf{N}_2\|_{L^2}^2\\
&\leq C.
\end{aligned}
$$
In a similar way, one gets that
$$
\begin{aligned}
\vep^{-1}&\int_{0}^\infty|W_{\varpi}\nu^{1/2}\psi_1^{n+1}|_{{L^2_v}}^2\dd y_1\\
&\leq C\|w_{q,\vartheta}\psi_1^{n+1}\|_{L^{\infty}}^2+C\|\nu^{1/2}W_{\varpi}\psi_1^{n+1}\|_{L^2}^2
+C\|\nu^{-1/2}W_{\varpi}\mathbf{J}_2\|_{L^2}^2+C\|\nu^{-1/2}W_{\varpi}\mathbf{N}_2\|_{L^2}^2.
\end{aligned}
$$
Combining the two estimates above with \eqref{3.3.4} and \eqref{3.3.7}, we have
$$
\begin{aligned}
\|W_{\varpi}\nu^{1/2}\psi_1^{n+1}\|_{L^2}&\leq C+C\vep^{1/2}\|w_{q,\vartheta}\psi_1^{n+1}\|_{L^{\infty}}+C\vep^{1/2}\|\nu^{1/2}W_{\varpi}\psi_1^{n+1}\|_{L^2}\\
&\qquad+C\vep^{1/2}\|\nu^{-1/2}W_{\varpi}\mathbf{J}_2\|_{L^2}+C\vep^{1/2}\|\nu^{-1/2}W_{\varpi}\mathbf{N}_2\|_{L^2}^2\\
&\leq C+C\vep^{1/2}\|w_{q,\vartheta}\psi_1^{n+1}\|_{L^{\infty}}+C\vep^{1/2}\|\nu^{1/2}W_{\varpi}\psi_1^{n+1}\|_{L^2}\\
&\qquad+C\vep^{1/2}\{1+|z_1^{n}|_{L^\infty_{y}}+\|w_{q,\vartheta}\psi_1^n\|_{L^{\infty}}\}\cdot
\{|W_{\varpi}z_1^n|_{L^2_y}+\|\nu^{1/2}W_{\varpi}\psi_1^n\|_{L^2}\}.
\end{aligned}
$$
Then \eqref{3.3.11} follows by taking $\vep>0$ suitably small. Let $\hat{C}_3:=\max\{\hat{C}_1,\hat{C}_2,1\}$ and $\hat{C}_4:=\max\{\tilde{C}_1,1\}$. Introduce a norm
\begin{align}\label{def.norm.Y}
\|[z_1,\psi_1]\|_{\mathcal{Y}}:=2\hat{C}_4\{\|W_{\varpi}z_1\|_{L^2_{y}}+\|z_1\|_{L^{\infty}_{y}}\}
+\|\nu^{1/2}W_{\varpi}\psi_1\|_{L^2}+\|w_{q,\vartheta}\psi_1\|_{L^{\infty}}.
\end{align}
We will show inductively that
\begin{align}\label{3.3.17}
\|[z_1^n,\psi_1^n]\|_{\mathcal{Y}}\leq 8(\hat{C}_4+1)\hat{C}_3.
\end{align}
Notice that \eqref{3.3.17} holds automatically when $n=0$. Assume it holds for $n=k$. Then, by \eqref{3.3.9}, \eqref{3.3.10} and \eqref{3.3.11}, we have
\begin{align*}
\|[z_1^{k+1},\psi_1^{k+1}]\|_{\mathcal{Y}}&\leq 2\hat{C}_3(\hat{C}_4+1)+\f12\|[z_1^k,\psi^k_1]\|_{\mathcal{Y}}+C\vep^{1/2}\|[z_1^{k+1},\psi^{k+1}_1]\|_{\mathcal{Y}}\nonumber
\\
&\quad+2C(\hat{C}_4+1)\vep^{1/2}\|[z_1^k,\psi^k_1]\|_{\mathcal{Y}}^2\nonumber\\
&\leq6\hat{C}_3(\hat{C}_4+1)+128C\hat{C}_3^2(\hat{C}_4+1)^3\vep^{1/2}+C\vep^{1/2}\|[z_1^{k+1},\psi^{k+1}_1]\|_{\mathcal{Y}},
\end{align*}
which yields \eqref{3.3.17}
by taking $\vep>0$ so small that
$C\vep^{1/2}\leq \f{1}{8}$ and $128C\hat{C}_3(\hat{C}_4+1)^2\vep^{1/2}\leq 1.$ To prove the convergence of $[z_1^n,\psi_1^n]$, we consider the difference $[z_1^{n+1}-z_1^n,\psi_1^{n+1}-\psi_1^n]:=[\b{z}_1^{n+1},\b{\psi}_{1}^{n+1}]$. Notice that it satisfies the following system:
$$\left\{
\begin{aligned}
&\pa_y\b{z}_1^{n+1}-\lambda\b{z}_{1}^{n+1}+2\zeta_{\vep}z_0\b{z}_{1}^{n+1}=N_1(z_1^{n},\psi_1^{n})
-N_1(z_1^{n-1},\psi_1^{n-1}),\\
&(v_1-s)\pa_y\b{\psi}_1^{n+1}+\vep^{-1}\mathbf{H}\b{\psi}_1^{n+1}=\vep^{-1/2}[\mathbf{N}_2(z_1^{n},\psi_1^{n})
-\mathbf{N}_2(z_1^{n-1},\psi_1^{n-1})],\\
&\lim_{y\rightarrow\pm\infty}\b{z}_1^{n+1}(y)=0,\b{z}_1^{n+1}(0)=0,\lim_{y\rightarrow\pm\infty}\b{\psi}_{1}^{n+1}(y,v)=0.
\end{aligned}
\right.$$
Using \eqref{3.3.3}, we get
$$
\begin{aligned}
&|W_{\varpi}[N_1(z_1^n,\psi_1^n)-N_1(z_1^{n-1},\psi_1^{n-1})]|_{L^2_{y}}\\
&\quad\leq C\vep^{1/2}\{1+
\|[z_1^{n},\psi_1^n]\|_{\mathcal{Y}}+\|[z_1^{n-1},\psi_1^{n-1}]\|_{\mathcal{Y}}\}
\{|W_{\varpi}\b{z}^n_1|_{L^2_{y}}+\|\nu^{1/2}W_{\varpi}\b{\psi}^n_1\|_{L^{2}}\},\\
&|[N_1(z_1^n,\psi^n_1)-N_1(z_1^{n-1},\psi_1^{n-1})]|_{L^\infty_{y}}\\
&\quad\leq C\vep^{1/2}\{1+
\|[z_1^{n},\psi_1^n]\|_{\mathcal{Y}}+\|[z_1^{n-1},\psi_1^{n-1}]\|_{\mathcal{Y}}\}
\{|\b{z}^n_1|_{L^\infty_{y}}+\|W_{q,\vartheta}\b{\psi}^n_1\|_{L^{\infty}}\},\\
&\|\nu^{-1/2}W_{\varpi}[\mathbf{N}_2(z_1^n,\psi_1^n)-\mathbf{N}_2(z_1^{n-1},\psi_1^{n-1})]\|_{L^2}\\
&\quad\leq C|W_{\varpi}\b{z}^n_1|_{L^2_{y}}+C\vep^{1/2}\{1+\|[z_1^n,\psi_1^n]\|_{\mathcal{Y}}+\|[z_1^{n-1},\psi_1^{n-1}]\|_{\mathcal{Y}}\}
\{|W_{\varpi}\b{z}^n_1|_{L^2_{y}}+\|\nu^{1/2}W_{\varpi}\b{\psi}^n_1\|_{L^{2}}\},
\end{aligned}
$$
and
$$
\begin{aligned}
	&\|\nu^{-1}w_{q,\vartheta}[\mathbf{N}_2(z_1^n,\psi_1^n)-\mathbf{N}_2(z_1^{n-1},\psi^{n-1}_1)]\|_{L^\infty}\\
	&\quad\leq C|\b{z}^n_1|_{L^\infty_{y}}+C\vep^{1/2}\{1+\|[z_1^n,\psi_1^n]\|_{\mathcal{Y}}+\|[z_1^{n-1},\psi_1^{n-1}]\|_{\mathcal{Y}}\}
	\{|\b{z}^n_1|_{L^\infty_{y}}+\|w_{q,\vartheta}\b{\psi}^n_1\|_{L^{\infty}}\}.
\end{aligned}
$$
Utilizing Propositions \ref{prop3.1} and \ref{prop3.2} to $[\b{z}_1^{n+1},\b{\psi}_1^{n+1}]$, together with the four estimates above, we have
\begin{align}\label{3.3.18}
|W_{\varpi}\b{z}_1^{n+1}|_{L^2_{y}}+|&\b{z}_1^{n+1}|_{L^{\infty}_{y}}\leq C\vep^{1/2}\{1+\|[z_1^n,\psi_1^n]\|_{\mathcal{Y}}+\|[z_1^{n-1},\psi_1^{n-1}\|_{\mathcal{Y}}\}\nonumber\\
&\times\{|W_{\varpi}\b{z}_1^n|_{L^2_{y}}+|\b{z}_1^n|_{L^\infty_{y}}
+\|\nu^{1/2}W_{\varpi}\b{\psi}_1^n\|_{L^{2}}+\|w_{q,\vartheta}\b{\psi}_1^n\|_{L^{\infty}}\},
\end{align}
and
\begin{align}\label{3.3.19}
\|w_{q,\vartheta}\b{\psi}_1^{n+1}\|_{L^{\infty}}\leq & \hat{C}_5|W_{\varpi}\b{z}_1^{n}|_{L^2_{y}}+C\vep^{1/2}\{1+\|[z_1^n,\psi_1^n]\|_{\mathcal{Y}}+\|[z_1^{n-1},\psi_1^{n-1}]\|_{\mathcal{Y}}\}\nonumber\\
&\times\{|W_{\varpi}\b{z}_1^n|_{L^2_{y}}+|\b{z}_1^n|_{L^\infty_{y}}
+\|\nu^{1/2}W_{\varpi}\b{\psi}_1^n\|_{L^{2}}+\|w_{q,\vartheta}\b{\psi}_1^n\|_{L^{\infty}}\},
\end{align}
where the constant $\hat{C}_5>1$ is independent of $\vep$. Moreover, by the similar argument to how we get \eqref{3.3.11}, one can also show that
\begin{align}\label{3.3.20}
\|\nu^{1/2}W_{\varpi}\b{\psi}_1^{n+1}\|_{L^{2}}\leq& C\vep^{1/2}\|w_{q,\vartheta}\b{\psi}_1^{n+1}\|_{L^{\infty}}
+C\vep^{1/2}\{1+\|[z_1^n,\psi_1^n]\|_{\mathcal{Y}}+\|[z_1^{n-1},\psi_1^{n-1}]\|_{\mathcal{Y}}\}\nonumber\\
&\times\{|W_{\varpi}\b{z}_1^n|_{L^2_{y}}+\|\nu^{1/2}W_{\varpi}\b{\psi}_1^n\|_{L^{2}}\}.
\end{align}
Let $g_n:=2\hat{C}_5\{|W_{\varpi}\b{z}_1^{n}|_{L^2_{y}}+|\b{z}_1^{n}|_{L^\infty_{y}}\}
+\|\nu^{1/2}W_{\varpi}\b{\psi}_1^{n}\|_{L^{2}}+\|w_{q,\vartheta}\b{\psi}_1^{n}\|_{L^{\infty}}.$ Then, it holds by $2\hat{C}_5\times\eqref{3.3.18}+\eqref{3.3.19}+\eqref{3.3.20}$, together with \eqref{3.3.17}, that
$$
\begin{aligned}
g_{n+1}\leq \f{1}{2}g_n+C\vep^{1/2}g_{n+1}+2(\hat{C}_5+1)\{1+16\hat{C}_3(\hat{C}_4+1)\}\vep^{1/2}g_n.
\end{aligned}
$$
We further choose $\vep>0$ small enough such that
$$
C\vep^{1/2}\leq \f{1}{16}\text{ and }2(\hat{C}_5+1)\{1+16\hat{C}_3(\hat{C}_4+1)\}\vep^{1/2}\leq \f{1}{4},
$$
so as to get
$$
\begin{aligned}
	g_{n+1}\leq \f{4}{5}g_n,
\end{aligned}
$$ which implies that $[z_1^n,\psi_1^n]$ is a Cauchy sequence. Then, we can construct the solution to \eqref{3.1.13} and \eqref{3.1.15} by taking $[z_1,\psi_1]:=\lim_{n\rightarrow\infty}[z_1^n,\psi_1^n]$ which satisfies \eqref{3.3.1} according to \eqref{3.3.17} with \eqref{def.norm.Y}. Finally, the continuity follows from the $L^{\infty}$-convergence. Therefore, the proof of Proposition \ref{prop3.3} is completed.
\end{proof}
Higher spatial regularity is given by the following Corollary.
\begin{corollary}\label{cor3.4}
Let $N>0$ be an integer, $-3<\gamma\leq 1$ and $\theta=\f{2}{3-\gamma}$. Under the same assumption as in Proposition \ref{prop3.3}, there exist positive constants $C$ and $\varpi$ independent of $\vep$, such that
\begin{align}\label{3.3.21}
\sum_{k=0}^N\sup_{y}\big|e^{\varpi|y|^{\theta}}\pa^k_yz_1\big|+\sup_y\big|e^{\varpi|y|^{\theta}}|\nu^{1/2}\pa^k_y\psi_1|_{{L^2_v}}\big|+\|w_{q,\vartheta}\pa^k_y\psi_1\|_{L^{\infty}}\leq C.
\end{align}
Moreover, for any $k=1,\cdots,N$, $\pa^k_y z_1$ is continuous and $\pa^k_y\psi_1$ is continuous away from $\mathbb{R}\times \{v_1=s\}$.
\end{corollary}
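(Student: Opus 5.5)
We argue by induction on $N$, combining the linear regularity results with the weighted-$L^2$/pointwise interplay already used in the proof of Proposition \ref{prop3.3}. The base case $k=0$ mostly follows from \eqref{3.3.1}. The exceptions are the pointwise-in-$y$ weighted bounds $\sup_y e^{\varpi|y|^\theta}|z_1|$ and $\sup_y e^{\varpi|y|^\theta}|\nu^{1/2}\psi_1|_{L^2_v}$.

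For $z_1$, we apply the $L^\infty$ part \eqref{3.2.0-1.2} of Proposition \ref{prop3.1} with weight $e^{\varpi|y|^\theta}$. For this we need $|e^{\varpi|\cdot|^\theta}r_1|_{L^\infty_y}<\infty$. The term $J_1$ decays like $p(1-p)$, hence exponentially, so it is fine. The nonlinear term $N_1$ is bounded by $\vep^{1/2}$ times $|z_1|^2$ plus $\langle \varphi_\vep,\mG(\cdot,\psi_1)\rangle$. Since $\varphi_\vep$ decays fast in $v$, \eqref{3.3.3} controls the latter by $|\nu^{1/2}\psi_1|_{L^2_v}(y)$ times $L^\infty$ norms. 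So it suffices to bound $e^{\varpi|y|^\theta}|\nu^{1/2}\psi_1|_{L^2_v}$ pointwise.

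To get this pointwise bound, we reuse the identity behind \eqref{3.3.14}. We multiply the $\psi_1$ equation by $W_\varpi^2\psi_1$ and integrate over $(-\infty,y]$ for $y<0$, or over $[y,\infty)$ for $y>0$. We do not integrate the $\mathbf{H}$-term this time; instead we estimate the $y$-derivative. The difficulty is that $\langle (v_1-s)\psi_1,\psi_1\rangle$ is not coercive. We therefore use the mild/characteristic representation \eqref{3.2.15}, which controls $\psi_1(y,\cdot)$ by $\vep^{-1}$-short averages along characteristics. This gives
\[
e^{\varpi|y|^\theta}|\nu^{1/2}\psi_1(y)|_{L^2_v}\lesssim \vep^{-1/2}\|W_\varpi\nu^{1/2}\psi_1\|_{L^2(|y'-y|\lesssim 1)}+\text{(tail terms)},
\]
which converts the weighted $L^2_y$ bound \eqref{3.3.1} into a weighted $L^\infty_y$ bound. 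The velocity tail $|v|\gtrsim(1+|y|)^{\theta/2}$ is handled by $w_{q,\vartheta}$ exactly as in the splitting preceding \eqref{I6}. This splitting is what fixes $\theta=2/(3-\gamma)$.

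For the induction step, we differentiate the coupled system \eqref{3.1.13}, \eqref{3.1.15} $k$ times in $y$. Then $[\pa_y^kz_1,\pa_y^k\psi_1]$ solves the same linear problems \eqref{L1}, \eqref{L2}. The sources are $\pa_y^k J_1$, $\pa_y^k\mathbf{J}_2$, which decay like derivatives of $p$, hence exponentially, together with $\pa_y^k N_1$, $\pa_y^k\mathbf{N}_2$. By Leibniz and \eqref{3.3.3}, the latter are bounded by $\vep^{1/2}$ times the top-order unknowns plus lower-order products, which are bounded by the induction hypothesis. The term $|W_\varpi\pa^k_y z_1|$ in $\mathbf{N}_2$ is not small, but it is handled in the same triangular way as in \eqref{3.3.9}--\eqref{3.3.11}: close $z_1$ first, then $\psi_1$. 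We justify differentiability via difference quotients as in Corollary \ref{cor3.2.5}, using Propositions \ref{prop3.1} and \ref{prop3.2} (with the derivative versions \eqref{3.2.0-1.2}, \eqref{3.2.0-7}) uniformly in $h$.

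The key steps, in order, are:
\begin{enumerate}
\item Derive the a priori weighted bounds \eqref{3.3.9}--\eqref{3.3.11} for the $k$-th derivatives.
\item Absorb the $\vep^{1/2}$ terms.
\item Upgrade to sup-in-$y$ weighted bounds as above.
\end{enumerate}
Continuity of $\pa_y^kz_1$ follows from the ODE; continuity of $\pa_y^k\psi_1$ away from $\{v_1=s\}$ follows from the continuity statement in Corollary \ref{cor3.2.5}, since the sources are continuous there.

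The main obstacle is the pointwise weighted $L^2_v$ bound for $\psi_1$, because the transport term degenerates at $v_1=s$. If the characteristic argument proves awkward, the fallback is to use the equation itself: $\pa_y|W_\varpi\psi_1|^2$-type control of $\sup_y$ via $\int|f|\,|f'|$. This requires the weighted $L^2_y$ bound of $\pa_y^{k+1}\psi_1$, which we can obtain from the induction at level $k+1$, only needing $N+1$ derivatives of the data. The data $J$ are smooth, so this is available.
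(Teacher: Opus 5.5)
Your treatment of the derivatives matches the paper: you differentiate the coupled system, justify this with difference quotients and Corollary \ref{cor3.2.5}, and close the $k$-th order bounds triangularly as in \eqref{3.3.9}--\eqref{3.3.11}. Your ``fallback'' is in fact the paper's whole argument for the $\sup_y$ bounds. One first proves $|e^{\varpi|\cdot|^\theta}\pa_y^kz_1|_{L^2_y}+\|e^{\varpi|\cdot|^\theta}\nu^{1/2}\pa_y^k\psi_1\|_{L^2}\le C$ for one more derivative than needed. Then one applies $\sup_y|F|^2\le\int|F|^2+2\int|F||F'|$ to $F=W_\varpi\pa_y^kz_1$ and to $F=W_\varpi|\nu^{1/2}\pa_y^k\psi_1|_{L^2_v}$, using $|W_\varpi'|\lesssim W_\varpi$. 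Make this your main argument. It also makes the detour through the weighted $L^\infty$ part of Proposition \ref{prop3.1} for $z_1$ unnecessary.

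The characteristic route you put first does not yield the statement as written. In Lemma \ref{lm3.2.5}, the only step that turns $L^2$ control into pointwise control is the change of variables $y_1'\mapsto u_1$, whose Jacobian is only $\gtrsim\vep/N$. That is why \eqref{3.2.16-8} carries the factor $\vep^{-1/2}\|\nu^{1/2}\psi\|_{L^2}$, and your display has the same factor. If you feed in \eqref{3.3.1}, which only gives $\|W_\varpi\nu^{1/2}\psi_1\|_{L^2}\le C$, you get $\sup_yW_\varpi|\nu^{1/2}\psi_1|_{L^2_v}\lesssim\vep^{-1/2}$, which is not independent of $\vep$. Repairing this route would take three additional steps:
\begin{enumerate}
\item You need the sharper bound $\|W_\varpi\nu^{1/2}\psi_1\|_{L^2}\lesssim\vep^{1/2}$. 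It can be read off from \eqref{3.3.16}, whose left side carries $\vep^{-1}$ while its right side is $O(1)$.
\item You need a $y$-weighted version of Lemma \ref{lm3.2.5}. There, absorbing the $\|\mathbf{h}\|_{L^\infty}$ terms requires knowing a priori that $\sup_{y,v}W_\varpi w_{q,\vartheta}|\psi_1|<\infty$. This is not contained in \eqref{3.3.1}, so you would need truncated weights or to run the argument on the iterates. You would also need to control $W_\varpi(y)/W_\varpi(y')$ along characteristics at large velocities.
\item You need to handle the source term. Through $\mathbf{N}_2$ it contains $\sup_yW_\varpi|z_1|$, so it couples back to your $z_1$ step.
\end{enumerate}
The Sobolev route avoids all of this at the cost of one extra $y$-derivative, which is free because $N$ is arbitrary and $J_1,\mathbf{J}_2$ are smooth in $y$.
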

\begin{proof}
Utilizing Corollary \ref{cor3.2.5} and the same argument as in Proposition \ref{prop3.3}, we can show that
\begin{align}
\sum_{k=0}^N|e^{\varpi|\cdot|^{\theta}}\pa^k_yz_1|_{L^{2}_y}+|\pa^k_yz_1|_{L^{\infty}_y}
+\|e^{\varpi|\cdot|^{\theta}}\nu^{1/2}\pa^k_y\psi_1\|_{L^2}+\|w_{q,\vartheta}\pa^k_y\psi_1\|_{L^{\infty}}\leq C,
\nonumber
\end{align}
with some positive constants $C$ independent of $\vep$. The continuity is also similar. Then \eqref{3.3.21} follows from the standard Sobolev embedding.
\end{proof}

\subsection{Proof of the main theorem}
\begin{proof}[Proof of Theorem \ref{thm1.1}] Recall that $y=\vep x$ and $$
\begin{aligned}\bigg[\f{F_A-M_{A,-}}{\sqrt{M_{A,-}}},\f{F_B-M_{B,-}}{\sqrt{M_{B,-}}}\bigg]^T=\vep\mathbf{f}(\vep x)
=\vep \big[(z_0+\vep z_1)\phi_{\vep}+\vep(\psi_0+\vep^{1/2}\psi_1)\big].
\end{aligned}$$
Existence, continuity and \eqref{1.0.2} follow from Proposition \ref{prop3.3}. For $x>0$, we have
$$\begin{aligned}
\bigg[\f{F_A-M_{A,+}}{\sqrt{M_{A,-}}},\f{F_B-M_{B,+}}{\sqrt{M_{B,-}}}\bigg]^T&=\vep[\mathbf{f}-\mathbf{f}_{\infty}](\vep x)\\
&=\vep [p(\vep x)-1][z_{\infty}\phi_{\vep}+\vep(1+ p)\psi_{\infty}]+\vep^2(z_1\phi_{\vep}+\vep^{1/2}\psi_1).
\end{aligned}
$$
Then, \eqref{1.0.3-1} with $x>0$ follows directly from the exponential decay \eqref{p} of $p(\vep x)$ and (sub)exponential decay of $z_1$ and $\psi_1$ in Corollary \ref{cor3.4}. The decay for $x<0$ in \eqref{1.0.3} is similar. Therefore, the proof of Theorem \ref{thm1.1} is completed.
\end{proof}

\section{Appendix}\label{sec.app}
\begin{proof}[Proof of Proposition \ref{prop2.1}]
	 We seek for the solution to \eqref{3.1.1} in the form of
\begin{align}\label{5.1}
\phi_{\vep}=\phi_0+\vep\phi_1+\vep^2\Xi_{\vep}.
\end{align}
Substituting \eqref{5.1} into \eqref{3.1.1} and equating the coefficients on the both sides in front of the zero and first power of $\vep$ respectively, we have
\begin{align}\label{5.2}
\mathbf{L}\phi_0=0,
\end{align}
and
\begin{align}\label{5.3}
\mathbf{L}\phi_1+\lambda(v_1-c_0)\phi_0=0.
\end{align}
From \eqref{5.2} and \eqref{5.3}, $\phi_0=\sum_{j=-1}^4\alpha_j\chi_j$ where $\alpha_j$ is determined by the following linear algebraic system
\begin{align}\label{5.4}
\langle(v_1-c_0)\phi_0,\chi_i\rangle=0,\quad i=-1,\cdots,4.
\end{align}
Here, $\chi_i\in\text{Ker}\, \mathbf{L}, i=-1,\cdots4$ are defined in \eqref{2.1.2}. A direct computation shows that the coefficient matrix of \eqref{5.4} is given by
$$
\{\langle(v_1-c_0)\chi_i,\chi_j\rangle\}_{i,j=-1}^4=
\left(
  \begin{array}{cccccc}
    -c_0\rho_{A,-}& 0 & \rho_{A,-} & 0& 0 & 0 \\
    0& -c_0\rho_{B,-} & \rho_{B,-} & 0 & 0 & 0\\
     \rho_{A,-}& \rho_{B,-} & -c_0\b{m}_- & 0 & 0 & \b{\rho}_- \\
    0 & 0& 0 & -c_0\b{m}_- & 0 & 0 \\
    0 & 0 & 0 & 0 & -c_0\b{m}_- & 0 \\
    0 & 0 & \b{\rho}_- & 0 & 0 & -\frac{3}{2}c_0\b{\rho}_-\\
  \end{array}
\right),
$$
where $\b{\rho}_-$ and $\b{m}_-$ are defined in \eqref{1.2.5}. Therefore, from \eqref{5.4} we solve
\begin{align}
\phi_0=\b{\alpha}(\chi_{-1}+\chi_0+c_0\chi_1+\f23\chi_4):=\b{\alpha}\phi_0',\nonumber
\end{align}
with constant $\b{\alpha}$ to be chosen later. Next, we solve $\phi_1$. Decompose
\begin{align}\label{5.4-1}
\phi_1=\b{\alpha}(\beta'\phi_0'+\sum_{j=-1}^{3}\beta_j\chi_j+\phi_1'):=\b{\alpha}(\b{\phi}_1+\phi_1'),
\end{align}
where $\langle\phi_1',\chi_i\rangle=0$, $i=-1,\cdots,4$. From \eqref{5.3}, $\phi_1'$ is uniquely determined by the equation $L\phi_1'=-\lambda(v_1-c_0)\phi_0'$. To further solve $\beta_j$, $\beta'$ and $\phi_1'$, we shall use the orthogonal conditions \eqref{3.1.2}. Substitute \eqref{5.1} to \eqref{3.1.2} and balance the $O(1)$ term to get
\begin{align}\label{Pro1}
\langle(v_1-c_0)\phi_0,\phi_0\rangle=0.
\end{align}
And balance the $O(\vep)$ term to get
\begin{equation}\label{5.5}
\left\{\begin{aligned}
&2\langle(v_1-c_0)\phi_0,\phi_1\rangle+\langle\phi_0,\phi_0\rangle=-1,\\
&\langle(v_1-c_0)\phi_1,\chi_i\rangle+\langle\chi_i,\phi_0\rangle=0,\quad i=-1,\cdots,4.
\end{aligned}\right.
\end{equation}
The remainder equations of $\Xi_{\vep}$ are
\begin{equation}\label{5.6}
\left\{\begin{aligned}
&\langle(v_1-s)\phi_1,\phi_1\rangle+2\langle\phi_0,\phi_1\rangle+\langle(v_1-s)(2\phi_0+2\vep\phi_1+\vep^2\Xi_{\vep}),\Xi_{\vep}\rangle=0,\\
&\langle(v_1-s)\Xi_{\vep}, \chi_i\rangle+\langle\phi_1,\chi_i\rangle=0, \quad i=-1,\cdots,4.
\end{aligned}\right.
\end{equation}
From $\eqref{5.5}_2$, we have $\langle(v_1-c_0)\phi_1,\phi_0\rangle+\langle\phi_0,\phi_0\rangle=0,$ which together with $\eqref{5.5}_1$, implies that
\begin{align}\label{Pro2}
\langle\phi_0,\phi_0\rangle=-\langle(v_1-c_0)\phi_0,\phi_1\rangle=1.
\end{align}
Therefore, we have
 \begin{align}\notag
\b{\alpha}^2=\f3{10\b{\rho}_-},\quad\lambda=\b{\alpha}^{-2}\langle(v_1-c_0)\phi_0',\mathbf{L}^{-1}(v_1-c_0)\phi_0'\rangle^{-1}>0.
 \end{align}
 Now we turn to solve coefficients $\beta'$, $\beta_j$ in \eqref{5.4-1} as well as $\Xi_{\vep}$. We see from $\eqref{5.5}_2$ that
\begin{align}\label{5.7}
\sum_{j=-1}^3\langle(v_1-c_0)\chi_i,\chi_j\rangle\beta_j=-\langle\chi_i,\phi_0'\rangle-\langle(v_1-c_0)\chi_i,\phi_1'\rangle,
\end{align}
for $i=-1,\cdots 3.$ A direct computation shows that the determinant of coefficient matrix of \eqref{5.7} satisfies
\begin{align}\label{5.7-1}
\det\{\langle(v_1-c_0)\chi_i,\chi_j\rangle\}_{i,j=-1}^3=-\f23c_0^3\b{m}_-^2\b{\rho}_-\rho_{A,-}\rho_{B,-}<0.
\end{align}
Therefore, $\{\beta_j\}_{j=-1}^3$ can be uniquely solved from \eqref{5.7}. It remains to solve $\beta'$ and $\Xi_{\vep}$ under the constraint of \eqref{5.6}. Actually, the choices of $\beta'$ and $\Xi_{\vep}$ are not unique and we only give one among them. Let 
\begin{align}\label{DefXi}
\Xi_{\vep}=\sum_{j=-1}^3\kappa_j\chi_j+\kappa'\chi_5,
\end{align}
 where
$$
\chi_5=[(m_Av_2^2-1)M_A^{1/2},(m_Bv_2^2-1)M_B^{1/2}]^T.
$$
Note that $\langle(v_1-s)\chi_i,\chi_5\rangle=0$, for $i=-1,\cdots,3$. Then, one has from $\eqref{5.6}_2$ that
\begin{align}\label{5.7-2}
\sum_{j=-1}^3\langle(v_1-s)\chi_j,\chi_i\rangle\kappa_j=-\b{\alpha}\langle\beta'\phi_0'+\sum_{j=-1}^{3}\beta_j\chi_j+\phi_1',\chi_i\rangle,
\end{align}
for $i=-1,\cdots,3.$ As for the last two unknowns $\beta'$ and $\kappa'$, by letting $i=4$, it follows from $\eqref{5.6}_2$ that
\begin{align}\label{5.8}
&\langle(v_1-s)\chi_5,\chi_4\rangle\kappa'+\langle\phi_0,\chi_4\rangle\beta'=-\sum_{j=-1}^{3}\langle(v_1-s)\chi_j,\chi_4\rangle\kappa_j,
\end{align}
Notice that by the expansion \eqref{5.4-1} and \eqref{DefXi}, together with properties \eqref{5.4}, \eqref{Pro1} and \eqref{Pro2}, we can rewrite $\eqref{5.6}_1$ in the form of
$$
2\langle(v_1-c_0)\phi_0,\b{\alpha}(\sum_{j=-1}^{3}\beta_j\chi_j+\phi_1')\rangle\beta'+2\langle\phi_0,\phi_0\rangle\beta'+2\langle(v_1-c_0)\phi_0,\kappa'\chi_5\rangle=O(1)+O(\eps).
$$
We further use the identity
\begin{align*}
&\langle(v_1-c_0)\phi_0,\b{\alpha}(\sum_{j=-1}^{3}\beta_j\chi_j+\phi_1')\rangle=\langle(v_1-c_0)\phi_0,\phi_0+\b{\alpha}(\sum_{j=-1}^{3}\beta_j\chi_j+\phi_1')\rangle\\
&=\langle(v_1-c_0)\phi_0,\phi_1\rangle=-\langle(v_1-c_0)\phi_0,\phi_0\rangle=-1
\end{align*}
to get
\begin{align}\label{Eqka'}
\langle(v_1-c_0)\phi_0,\chi_5\rangle\kappa'=O(1)+O(\eps).
\end{align}
Direct calculations show that the coefficients in \eqref{5.8} and \eqref{Eqka'} are given by
$$
\langle(v_1-s)\chi_5,\chi_4\rangle= -s\b{\rho}_-, \qquad\langle\phi_0,\chi_4\rangle = \sqrt{\frac{3\b{\rho}_-}{10}},\qquad \langle(v_1-c_0)\phi_0,\chi_5\rangle = -c_0\sqrt{\frac{2\b{\rho}_-}{15}}.
$$
Moreover, if we put the $O(\eps)$ term on the right hand side, then $(\beta',\ka_{-1},\cdots,\ka_3,\ka')$ forms a linear system with the coefficient matrix
$$
\mathcal M=
\begin{pmatrix}
	-\bar\alpha\rho_{A,-} & -s\rho_{A,-} & 0 & \rho_{A,-} & 0 & 0 & 0\\
	-\bar\alpha\rho_{B,-} & 0 & -s\rho_{B,-} & \rho_{B,-} & 0 & 0 & 0\\
	-\bar\alpha c_0{\bar m}_- & \rho_{A,-} & \rho_{B,-} & -s{\bar m}_- & 0 & 0 & 0\\
	0 & 0 & 0 & 0 & -s{\bar m}_- & 0 & 0\\
	0 & 0 & 0 & 0 & 0 & -s{\bar m}_- & 0\\
	\bar\alpha{\bar\rho}_- & 0 & 0 & {\bar\rho}_- & 0 & 0 & -s{\bar\rho}_-\\
	0 & 0 & 0 & 0 & 0 & 0 &
	-c_0\sqrt{\dfrac{2{\bar\rho}_-}{15}}
\end{pmatrix},
$$
and its determinant 
$$
\det \mathcal M
=
\frac{1}{5}\,
c_0\,{\bar m}_-^{2}\rho_{A,-}\rho_{B,-}\,{\bar\rho}_-s^3
\left(2{\bar\rho}_-+{\bar m}_- s\varepsilon\right)>0.
$$
Hence, one can solve $\beta',\ka_{-1},\cdots, \ka_{3}$ and $\kappa'$ from \eqref{5.8}, \eqref{Eqka'} and the iteration argument. Finally, let $\mu_{\vep}:=\mathbf{L}\Xi_{\vep}+\lambda\phi_0+\lambda(v_1-s)(\phi_1+\vep\Xi_{\vep})$, then it is direct to show that $\mu_{\vep}$ satisfies \eqref{3.1.3} and \eqref{3.1.4}. The proof of Proposition \ref{prop2.1} is completed.
\end{proof}

The following elementary lemma is used in decay estimate of shock profile.
\begin{lemma}\label{lmA.1}
(1) If $0<\lambda_1<\lambda_2,$ then
\begin{align}\label{5.9}
\left|\int_0^ye^{-\lambda_1|y-y_1|}e^{-\lambda_2|y_1|}\dd y_1\right|\leq \f1{\lambda_2-\lambda_1}e^{-\lambda_2|y|}.
\end{align}
(2) If $0\leq\theta<1$, then for any $\lambda_1,\lambda_2>0$, it holds that
\begin{align}\label{5.10}
\left|\int_0^ye^{-\lambda_1|y-y_1|}e^{-\lambda_2|y_1|^{\theta}}\dd y_1\right|\leq C_{\lambda_1,\lambda_2,\theta}e^{-\lambda_2|y|^{\theta}},
\end{align}
where the constant $C_{\lambda_1,\lambda_2,\theta}>0$ is independent of $y.$
\end{lemma}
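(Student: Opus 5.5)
Both parts reduce by symmetry to $y\ge0$. The change of variables $y_1\mapsto -y_1$ turns the case $y<0$ into the case $|y|$, since only $|y-y_1|$ and $|y_1|$ appear and the absolute value outside handles orientation. For $y\ge 0$ the integrand is positive and $|y-y_1|=y-y_1$, $|y_1|=y_1$, so I only need to bound $\int_0^y e^{-\lambda_1(y-y_1)}e^{-\lambda_2 y_1^\theta}\,\dd y_1$.

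\emph{Part (1).} I will compute the integral exactly. One factorization is
\begin{align*}
\int_0^y e^{-\lambda_1(y-y_1)}e^{-\lambda_2 y_1}\,\dd y_1=e^{-\lambda_1 y}\,\frac{e^{(\lambda_1-\lambda_2)y}-1}{\lambda_1-\lambda_2}=\frac{e^{-\lambda_1 y}-e^{-\lambda_2 y}}{\lambda_2-\lambda_1}.
\end{align*}
The other factors out the weight rate:
\begin{align*}
e^{-\lambda_2 y}\int_0^y e^{-(\lambda_1-\lambda_2)(y-y_1)}\,\dd y_1 .
\end{align*}
When the kernel rate exceeds the weight rate, the last integral is bounded by $\frac{1}{|\lambda_1-\lambda_2|}$. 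This gives exactly the shape $\frac{1}{|\lambda_2-\lambda_1|}e^{-\lambda_2|y|}$, with the constant blowing up only as the two rates merge.

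This is precisely the configuration in which \eqref{5.9} is invoked in Proposition \ref{prop3.1}: the kernel there is $e^{-\frac\lambda2(y-\tau)}$ and the weight is $e^{-\varpi|\tau|}$ with $\varpi\le\frac\lambda4$. So the bound I will establish is the one needed there, with the faster exponential sitting on the kernel.

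\emph{Main obstacle.} The key point to check is the ordering of rates. If the faster rate were instead on the weight, the exact formula above shows the integral decays only like $e^{-\lambda_1 y}$. I would therefore state clearly that the decay $e^{-\lambda_2|y|}$ is obtained when the convolution kernel carries the larger exponent. This matches every use of \eqref{5.9} in the paper.

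\emph{Part (2).} If $\theta=0$, the weight is constant and the integral is at most $\lambda_1^{-1}e^{-\lambda_2}$, so the claim is trivial. For $0<\theta<1$, I split $[0,y]=[0,y/2]\cup[y/2,y]$.

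On $[0,y/2]$ I use $e^{-\lambda_2 y_1^\theta}\le1$ and $e^{-\lambda_1(y-y_1)}\le e^{-\lambda_1 y/2}$. This piece is therefore at most $\frac y2 e^{-\lambda_1 y/2}$. Since $\theta<1$, we have $\sup_{y\ge0}\frac y2 e^{-\lambda_1 y/2+\lambda_2 y^\theta}<\infty$, so this piece is bounded by $Ce^{-\lambda_2 y^\theta}$.

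On $[y/2,y]$ I use concavity of $t\mapsto t^\theta$:
\begin{align*}
y^\theta-y_1^\theta\le\theta y_1^{\theta-1}(y-y_1)\le\theta (y/2)^{\theta-1}(y-y_1).
\end{align*}
For $y\ge Y_0(\lambda_1,\lambda_2,\theta)$ the factor satisfies $\lambda_2\theta(y/2)^{\theta-1}\le\lambda_1/2$. Hence $e^{-\lambda_2 y_1^\theta}\le e^{-\lambda_2y^\theta}e^{\frac{\lambda_1}{2}(y-y_1)}$, and this piece is at most $e^{-\lambda_2 y^\theta}\int_0^\infty e^{-\lambda_1 t/2}\,\dd t=\frac2{\lambda_1}e^{-\lambda_2y^\theta}$.

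For $0\le y\le Y_0$, the integral is at most $Y_0\le Y_0e^{\lambda_2Y_0^\theta}e^{-\lambda_2y^\theta}$. Collecting the three cases gives \eqref{5.10} with a constant depending only on $\lambda_1,\lambda_2,\theta$.
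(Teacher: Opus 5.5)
Your handling of part (1) is correct, and you should state the problem outright rather than as a caveat. Under the hypothesis as written, $0<\lambda_1<\lambda_2$, inequality \eqref{5.9} is false. Your own formula gives, for $y>0$,
\[
\int_0^y e^{-\lambda_1(y-y_1)}e^{-\lambda_2 y_1}\,\dd y_1=\frac{e^{-\lambda_1 y}-e^{-\lambda_2 y}}{\lambda_2-\lambda_1}=\big(e^{(\lambda_2-\lambda_1)y}-1\big)\frac{e^{-\lambda_2 y}}{\lambda_2-\lambda_1},
\]
and the prefactor $e^{(\lambda_2-\lambda_1)y}-1$ is unbounded. The true statement is $0<\lambda_2<\lambda_1$ with constant $(\lambda_1-\lambda_2)^{-1}$. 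That is what Proposition \ref{prop3.1} actually uses (kernel rate $\lambda/2$, weight rate $\varpi\le\lambda/4$), and your factorization proves it. The paper gives no proof of (1) (``We only prove \eqref{5.10}''), so there is nothing further to compare there.

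Part (2) is also correct, but the paper argues differently. Instead of splitting $[0,y]$, it uses subadditivity, $y^\theta\le (y-y_1)^\theta+y_1^\theta$; its display has a typo, writing $|y|^\theta$ in place of $|y_1|^\theta$. This gives $e^{-\lambda_2 y_1^\theta}\le e^{-\lambda_2 y^\theta}e^{\lambda_2(y-y_1)^\theta}$ for every $y_1\in[0,y]$. Young's inequality $\lambda_2 t^\theta\le\frac{\lambda_1}{2}t+(1-\theta)\lambda_2^{1/(1-\theta)}(2\theta/\lambda_1)^{\theta/(1-\theta)}$ then absorbs the leftover factor into half of the kernel. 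The result is a single pointwise bound valid for all $y$, with the explicit constant $\frac{2}{\lambda_1}\exp\big((1-\theta)\lambda_2^{1/(1-\theta)}(2\theta/\lambda_1)^{\theta/(1-\theta)}\big)$.

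Your tangent-line bound $y^\theta-y_1^\theta\le\theta y_1^{\theta-1}(y-y_1)$ degenerates as $y_1\to0$. That is what forces the split at $y/2$, the separate sublinearity estimate on $[0,y/2]$, and the compact range $y\le Y_0$. You reach the same estimate, but with a less explicit constant and three cases. Both arguments rest on the sublinear growth of $t^\theta$. Subadditivity is the more economical tool here because it moves the weight from $y_1$ to $y$ uniformly, with no lower bound on $y_1$.
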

\begin{proof}
We only prove \eqref{5.10}. Without loss of generality, we assume that $y>0$. By Young's inequality, one has
$$
\lambda_2|y-y_1|^{\theta}\leq \f{\lambda_1|y-y_1|}{2}+(1-\theta)\lambda_2^{\f1{1-\theta}}\left(\f{2\theta}{\lambda_1}\right)^{\f{\theta}{1-\theta}}.
$$
Therefore, by using the elementary fact $|y|^\theta\leq |y-y_1|^{\theta}+|y|^{\theta}$, one has
$$
\begin{aligned}
\left|\int_0^ye^{-\lambda_1(y-y_1)}e^{-\lambda_2y_1^{\theta}}\dd y_1\right|&\leq e^{-\lambda_2y^\theta} e^{(1-\theta)\lambda_2^{\f1{1-\theta}}\left(\f{2\theta}{\lambda_1}\right)^{\f{\theta}{1-\theta}}}
\int_0^ye^{-\f{\lambda_1(y-y_1)}{2}}\dd y_1\\
&\leq \f2{\lambda_1}e^{(1-\theta)\lambda_2^{\f1{1-\theta}}\left(\f{2\theta}{\lambda_1}\right)^{\f{\theta}{1-\theta}}} e^{-\lambda_2y^\theta}:=C_{\lambda_1,\lambda_2,\theta}e^{-\lambda_2y^\theta}.
\end{aligned}
$$
\end{proof}

A similar calculation as in \cite{BD} gives the following Carleman's representation of terms in $K_R$.

\begin{lemma}\label{lmA.2}
Let $0<m\leq 1$ and $i,j\in\{A,B\}$. We have
\begin{itemize}
\item Carleman's representation for loss term \eqref{loss}:
\begin{align}\label{c1}
\int_{\mathbb{R}^3}\int_{\mathbb{S}^2}B^{ji}&(|v-u|,\sigma)\chi_m(|v-u|)M_j^{1/2}(u)M_{i}^{1/2}(v)f_j(u)\dd u\dd\sigma\nonumber\\
&=C_{ij}\int_{\mathbb{R}^3}\chi_m(|v-u|)e^{-\f{m_i|v|^2+m_j|u|^2}{4}}|v-u|^\gamma f_j(u)\dd u.
\end{align}
Here constant $C_{ij}>0$ depends only on $m_i$, $m_j$ and cross-section $B^{ji}$.\\
\item Carleman's representation for mono-species part of gain term \eqref{gain}:
\begin{align}
&\int_{\mathbb{R}^3}\int_{\mathbb{S}^2}B^{ii}(|v-u|, \theta)\chi_m(|v-u|)M_i^{1/2}(u)[M_{i}^{1/2}(v')f_i(u')+M_{i}^{1/2}(u')f_i(v')]\dd u\dd\sigma\nonumber\\
&=C_{ii}\int_{\mathbb{R}^3}\f{f_i(v')}{|v-v'|}e^{-\f{m_i}{8}[|v-v'|^2+\f{||v|^2-|v'|^2|^2}{|v-v'|^2}]}\dd v'\int_{\mathbb{R}^2}\f{b^{ii}(\theta)\chi_m(\sqrt{|v-v'|^2+|\eta|^2})}{(|v-v'|^2+|\eta|^2)^{\f{1-\gamma}{2}}}e^{-\f{m_i|\eta+v_{\perp}|^2}{2}}\dd\eta.\notag
\end{align}
Here $v_{\perp}$ is the projection of $v$ on the hyperplane perpendicular to $\f{v-v'}{|v-v'|}$ and passing through $0$ and the constant $C_{ii}>0$ depends only on $m_i$ and cross-section $B^{ii}$.\\
\item Carleman's representation for bi-species part of gain term \eqref{gain}: for $i\neq j$ and $m_i\neq m_j$, it holds that
\begin{align}
&\int_{\mathbb{R}^3}\int_{\mathbb{S}^2}B^{ji}(|v-u|,\sigma)\chi_m(|v-u|))M_j^{1/2}(u)M_{j}^{1/2}(u')f_{i}(v')\dd u\dd\sigma\nonumber\\
=&C_{ij}\int_{\mathbb{R}^3}\f{f_i(v')}{|v-v'|}e^{-\f{m_i^2|v-v'|^2}{8m_j}-\f{m_i||v|^2-|v'|^2|^2}{8|v-v'|^2}}\dd v' \nonumber\\
&\quad\times\int_{\mathbb{R}^2}\f{b^{ji}(\theta)\chi_m\left(\sqrt{|\eta|^2+\f{|v'-v|^2}{4}\left(1+\f{m_i}{m_j}\right)^2}\right)}
{\left(|\eta|^2+\f{|v'-v|^2}{4}\left(1+\f{m_i}{m_j}\right)^2\right)^{\f{1-\gamma}{2}}}e^{-\f{|v_{\perp}+\eta|^2}{2}}\dd\eta, \label{c3}
\end{align}
and
\begin{align}
&\int_{\mathbb{R}^3}\int_{\mathbb{S}^2}B^{ji}(|v-u|,\sigma)\chi_m(|v-u|))M_j^{1/2}(u)M_{i}^{1/2}(v')f_{j}(u')\dd u\dd\sigma\nonumber\\
=&C_{ij}\int_{\mathbb{R}^3}\f{f_j(u')}{|u'-v|}e^{-\left|\f{\sqrt{m_i}+\sqrt{m_j}}{\sqrt{m_i}-\sqrt{m_j}}\right|^2\tilde{V}^T\mathbf{U}\tilde{V}}
e^{-\f{|m_i-m_j|^2|v_{\perp}|^2}{4}}\dd u'\nonumber\\
&\quad\times\int_{|\eta|=\f{|v-u'|}{|m_i-m_j|}}\f{b^{ji}(\theta)\chi_m(|v-u(\eta,v,u')|)}{|v-u(\eta,v,u')|^{1-\gamma}}
e^{-\f{\sqrt{m_im_j}|\sqrt{m_im_j}\eta-z(v,u')|^2}{2}}\dd \eta.\label{c4}
\end{align}
In both \eqref{c3} and \eqref{c4}, we have denoted $v_{\perp}$ as the projection of $v$ on the hyperplane perpendicular to $\f{v-u'}{|v-u'|}$ and passing through $0$ and the constant $C_{ij}>0$ depends only on $m_i$, $m_j$ and cross-section $B^{ji}$. In \eqref{c4}, we have denoted $\tilde{V}:=[|v-u'|,\f{||v|^2-|u'|^2|}{|v-u'|}]^T$, the matrix
\begin{equation}\label{DefU}
\mathbf{U}:=\left(
              \begin{array}{cc}
                \sqrt{\f{|m_i+m_j|^2}{4}+m_im_j}, & \f{m_j^2-m_i^2}{2} \\
                \f{m_j^2-m_i^2}{2},& \f{|m_i-m_j|}{2} \\
              \end{array}
            \right),
\end{equation}
$z=z(v,u')=\f{m_ju'-m_iv}{m_j-m_i}$, and $u=u(\eta,v,u')=z(v,u')+m_i\eta$.
\end{itemize}
\end{lemma}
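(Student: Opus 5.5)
The plan is to reduce every identity to an explicit change of variables in the collision integral. Only the Gaussian factors then have to be tracked, and those are handled by completing squares using conservation of momentum and energy.

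For \eqref{c1} nothing has to be moved, because $f_j$ is already evaluated at the integration variable $u$. Integrating $b^{ji}$ over $\mathbb{S}^2$ gives a constant. We then write
$$M_j^{1/2}(u)M_i^{1/2}(v)=c\,e^{-\frac{m_i|v|^2+m_j|u|^2}{4}},$$
which yields \eqref{c1}.

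For the gain terms we pass from the $\sigma$-representation \eqref{1.1.3} to the $\omega$-representation,
$$v'=v-\frac{2m_j}{m_i+m_j}\big((v-u)\cdot\omega\big)\omega,\qquad u'=u+\frac{2m_i}{m_i+m_j}\big((v-u)\cdot\omega\big)\omega .$$
This changes $b^{ji}$ by a smooth angular Jacobian, which is harmless under the cutoff assumption.

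In the mono-species case ($m_i=m_j$), and in \eqref{c3} where $f_i(v')$ appears, we write $u=v+w$ and split $w=w_\parallel\omega+\eta$ with $\eta\perp\omega$. The parallel part is fixed by $v-v'$:
$$|w_\parallel|=\frac{m_i+m_j}{2m_j}|v-v'|=\frac{|v-v'|}{2}\Big(1+\frac{m_i}{m_j}\Big).$$
The map $(\omega,w_\parallel)\mapsto v'$ is polar coordinates centred at $v$, so $\dd\omega\,\dd w_\parallel=\dd v'/|v-v'|^2$. Together with the factor $|(v-u)\cdot\omega|\sim|v-v'|$ from the $\sigma\to\omega$ change, this produces the prefactor $|v-v'|^{-1}$. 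The free variable is $\eta$, ranging over the plane orthogonal to $v-v'$. The relative speed is
$$|v-u|^2=|\eta|^2+\tfrac14|v-v'|^2\Big(1+\tfrac{m_i}{m_j}\Big)^2,$$
which gives the kernel $|v-u|^{\gamma-1}$ and the cutoff factor $\chi_m$ exactly as written.

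It remains to rewrite the Gaussians. Energy conservation gives $m_j|u|^2+m_j|u'|^2=m_j|u|^2+m_j|u|^2+(\text{terms in }v,v')$, which expresses the exponent through $|v|^2-|v'|^2$ and the $\parallel/\perp$ components. Completing the square in $\eta$ against $v_\perp$ then yields the factor $e^{-\frac{m_i^2|v-v'|^2}{8m_j}-\frac{m_i||v|^2-|v'|^2|^2}{8|v-v'|^2}}$ together with the residual $e^{-|v_\perp+\eta|^2/2}$ in \eqref{c3}. Setting $m_i=m_j$ in the same computation gives the mono-species formula.

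The main obstacle is \eqref{c4}, where $f_j(u')$ is integrated and $u'$ must be made the outer variable. Fix $v$ and $u'$. Momentum conservation expresses $u$ and $v'$ in terms of $u'$ and the direction $\omega$, and energy conservation then constrains $u$. Because $m_i\neq m_j$, this constraint is not a plane, as it is in the equal-mass case. It is a sphere centred at
$$z=\frac{m_ju'-m_iv}{m_j-m_i},$$
which one sees by eliminating $\omega$ from the two conservation laws. I would parametrize the sphere as $u=z+m_i\eta$ with $|\eta|=|v-u'|/|m_i-m_j|$, and compute the Jacobian of $(\omega,|(v-u)\cdot\omega|)\mapsto u'$ to obtain the prefactor $|u'-v|^{-1}$.

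The delicate step is collecting $M_j^{1/2}(u)M_i^{1/2}(v')$ in terms of $(v,u',\eta)$. I would first complete the square in $u$ around $z$, which gives $e^{-\sqrt{m_im_j}\,|\sqrt{m_im_j}\,\eta-z|^2/2}$. The leftover quadratic form depends on $v$ and $u'$ only through $|v-u'|$, $\frac{|v|^2-|u'|^2}{|v-u'|}$ and $|v_\perp|$. I would write it as $\tilde V^T\mathbf{U}\tilde V$, with the scalar $\big|\frac{\sqrt{m_i}+\sqrt{m_j}}{\sqrt{m_i}-\sqrt{m_j}}\big|^2$ factored out, plus $\frac{|m_i-m_j|^2|v_\perp|^2}{4}$, following the computation in \cite{BD}. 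I expect this bookkeeping to be the only genuinely laborious part, and I would check it against the limit $m_i\to m_j$.
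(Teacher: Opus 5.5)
Your treatment of \eqref{c1}, the mono-species formula and \eqref{c3} is the standard Carleman computation and is sound. The paper gives no argument beyond a pointer to \cite{BD}, so there you are supplying the intended calculation, up to two small points.

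\textbf{Small corrections.}

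First, in the mono-species case you must use $\sigma\mapsto-\sigma$ (which swaps $v'$ and $u'$ for equal masses) to fold the $f_i(u')$ term into the $f_i(v')$ term. This replaces $b^{ii}(\theta)$ by $b^{ii}(\theta)+b^{ii}(\pi-\theta)$.

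Second, in \eqref{c3} both Gaussians are $M_j^{1/2}$. Your computation will therefore produce $\frac{m_j}{8}\frac{||v|^2-|v'|^2|^2}{|v-v'|^2}$ and $e^{-m_j|v_\perp+\eta|^2/2}$, not $\frac{m_i}{8}$ and $e^{-|v_\perp+\eta|^2/2}$. For $m_i=1$, $m_j=2$, $v=0$, $u=\omega=e_1$, the exact exponent is $\frac59$ and the displayed one is $\frac13$.

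In \eqref{c4} your geometry (the sphere centred at $z$) is right. However, a ``Jacobian of $(\omega,|(v-u)\cdot\omega|)\mapsto u'$'' is not well defined, because $u'$ also depends on the part of $u-v$ orthogonal to $\omega$. Do it in two steps:
\begin{itemize}
\item At fixed $\omega$, $u\mapsto u'$ is affine with determinant $\frac{m_j-m_i}{m_i+m_j}$.
\item At fixed $u'$, the map $\omega\mapsto u=u'+\frac{2m_i}{m_j-m_i}((u'-v)\cdot\omega)\omega$ covers the sphere $|u-z|=R:=\frac{m_i|u'-v|}{|m_i-m_j|}$ twice, with $|(u'-v)\cdot\omega|\,\dd\omega=\frac{(m_j-m_i)^2}{4m_i^2}\frac{\dd S_u}{|u'-v|}$.
\end{itemize}
Since $|(v-u)\cdot\omega|=\frac{m_i+m_j}{|m_j-m_i|}|(u'-v)\cdot\omega|$, this gives the prefactor $|u'-v|^{-1}$ and the inner factor $b^{ji}\chi_m|v-u|^{\gamma-1}$ exactly as displayed.

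\textbf{The genuine gap: the exponent in \eqref{c4}.} This exponent is the whole content of that formula, and you defer it to \cite{BD}.

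On the fiber, with $u=z+m_i\eta$, energy conservation gives
$\frac{m_j}{4}|u|^2+\frac{m_i}{4}|v'|^2=\frac{m_j}{2}(|z|^2+R^2)+\frac{m_i}{4}|v|^2-\frac{m_j}{4}|u'|^2+m_im_j\,z\cdot\eta$.
Since $|\eta|$ is fixed on the fiber, ``completing the square around $z$'' only decides how $(v,u')$-dependent terms are split between the inner and outer factors. Moreover, the linear term here has the opposite sign to the one in $|\sqrt{m_im_j}\eta-z|^2$.

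More seriously, no such split yields the displayed outer factor. Take $z=0$, i.e.\ $m_ju'=m_iv$. Then $v_\perp=0$, both Gaussians are constant on the fiber, and the two exponents would have to differ by a constant.
\begin{itemize}
\item For $(m_i,m_j)=(2,1)$, $v=te$, $u'=2te$: the exact exponent is $\frac32t^2$. But $\tilde V=t(1,3)$ and $\tilde V^T\mathbf U\tilde V=\frac{\sqrt{17}-9}{2}t^2<0$, so the right side of \eqref{c4} grows like $e^{ct^2}$.
\item This is not just a sign slip in $\mathbf U$. For $(m_i,m_j)=(1,2)$, $v=2te$, $u'=te$: the exact exponent is again $\frac32t^2$, while the displayed one is about $530\,t^2$.
\end{itemize}
The limit $m_i\to m_j$ is singular here ($z,R\to\infty$), so it is not a usable check.

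\textbf{What you can prove instead.} All that Lemma \ref{lmK} uses is \eqref{c4} with the true factor $e^{-\frac14(m_j|u|^2+m_i|v'|^2)}$ kept inside the sphere integral, together with the fiberwise bound $m_j|u|^2+m_i|v'|^2\ge c(m_i,m_j)(|v|^2+|u'|^2)$. This bound follows from $2$-homogeneity and compactness: $u=v'=0$, together with momentum and energy conservation, forces $(m_j-m_i)|u'|^2=0$. Combined with $|v-u|\ge|v-u'|$, it gives $|k^{ji}(v,u')|\lesssim|u'-v|^{\gamma}e^{-\frac c4(|v|^2+|u'|^2)}$ directly.
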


\bigskip 
\noindent{\bf Acknowledgements.} 
The research of Renjun Duan was partially supported by the General Research Fund (Project No.~14303523) from  RGC of Hong Kong and by the National Natural Science Foundation of China   (Project No.~12425109). Zongguang Li would like to thank the Research Centre for Nonlinear Analysis at The Hong Kong Polytechnic University for supporting his postdoc study. The research of Zhu Zhang was supported by the Early Career Scheme (Project No.~25303523).

\vspace{2mm}
\noindent\textbf{Conflict of interest.} The authors do not have any possible conflicts of interest.

\vspace{2mm}
\noindent\textbf{Data availability statement.}
 Data sharing is not applicable to this article as no data sets were generated or analyzed during the current study.

\end{document}